\newcommand\res{\mathop{\hbox{\vrule height 7pt width .5pt depth 0pt
			\vrule height .5pt width 6pt depth 0pt}}\nolimits}
\newcommand\LL{\res} 
\newcommand\eps{\varepsilon}
\newcommand\Id{\mathrm{Id}}
\newcommand\cof{\mathrm{cof}}
\newcommand\sgn{\mathrm{sgn}}
\newcommand\loc{\mathrm{loc}}
\newcommand\diam{\mathrm{diam}}
\newcommand\strokedint{\fint}
\newcommand\Ver{\mathrm{Vert}}
\newcommand\Lip{\mathrm{Lip\,}}
\newcommand\dist{\mathrm{dist}}
\newcommand\skw{\mathrm{skw}}
\newcommand\conv{\mathrm{conv}}
\newcommand\Span{\mathrm{span}}
\newcommand\ASpan{\mathrm{\text{aff-span}}}
\newcommand\SO{\mathrm{SO}}
\renewcommand\O{\mathrm{O}}
\newcommand\R{\mathbb{R}}
\newcommand\N{\mathbb{N}}
\newcommand\Z{\mathbb{Z}}
\newcommand\calL{\mathcal{L}}
\newcommand\calH{\mathcal{H}}
\newcommand\calT{\mathcal{T}}
\newcommand\calP{\mathcal{P}}
\newtheorem{theorem}{Theorem}[section]
\newtheorem{proposition}[theorem]{Proposition}
\newtheorem{lemma}[theorem]{Lemma}
\newtheorem{remark}[theorem]{Remark}
\newtheorem{corollary}[theorem]{Corollary}
\numberwithin{equation}{section}
\newcommand{\muj}{\mu_u}
\newcommand\Dnabla{{D}}
\newcounter{Nummer}
\newenvironment{schrittlist}{
\setcounter{Nummer}{0}
\begin{list}{{\em Step \arabic{Nummer}:}}
{\setlength\leftmargin{0pt}
\setlength\labelwidth{0cm}
\setlength\itemindent{.2cm}
\usecounter{Nummer}}
}{
\end{list}
}
\begin{document}
\begin{center}
{\Large Approximation of $SBV$ functions with {possibly} infinite jump set}\\[5mm]
{\today}\\[5mm]
Sergio Conti$^{1}$, Matteo Focardi$^{2}$ and Flaviana Iurlano$^{3}$\\[2mm]
{\em $^{1}$
 Institut f\"ur Angewandte Mathematik,
Universit\"at Bonn,\\ 53115 Bonn, Germany}\\[1mm]
 
{\em $^{2}$ DiMaI, Universit\`a di Firenze\\ 50134 Firenze, Italy}\\[1mm]
{\em $^{3}$ Sorbonne Université, CNRS, Université Paris Cité,\\ Laboratoire Jacques-Louis Lions, 75005 Paris, France}
\\[3mm]
    \begin{minipage}[c]{0.8\textwidth}
We prove an approximation result for functions $u\in SBV(\Omega;\R^m)$ such that $\nabla u$ is $p$-integrable, $1\leq p<\infty$, and $g_0(|[u]|)$ is integrable over the jump set (whose $\calH^{n-1}$ measure is possibly infinite), for some continuous,
nondecreasing, subadditive function $g_0$, with $g_0^{-1}(0)=\{0\}$. The approximating functions $u_j$ are piecewise affine with piecewise affine jump set; the convergence is that of $L^1$ for $u_j$ and the convergence in energy for $|\nabla u_j|^p$ and $g([u_j],\nu_{u_j})$ for suitable functions $g$. In particular, $u_j$ converges to $u$ $BV$-strictly, area-strictly,
and strongly in $BV$ after composition with a bilipschitz map. If in addition $\calH^{n-1}(J_u)<\infty$, we also have convergence of $\calH^{n-1}(J_{u_j})$ to $\calH^{n-1}(J_u)$.
\end{minipage}
\end{center}

\tableofcontents 
\section{Introduction and main result}

Approximation with regular objects is a fundamental tool in many problems in functional analysis and 
in the Calculus of Variations. For instance,
De Giorgi's theory of sets of finite perimeter depends crucially on the approximability with piecewise smooth sets, a key step in the theory of Sobolev spaces is approximation by smooth functions (for example, the proof of the chain rule depends on it), and similarly for functions of Bounded Variation. Indeed, in these cases a possible definition {of the relevant function space} is via relaxation of a functional defined on smooth maps, and the difficult {part} is proving that this is equivalent to the intrinsic definition {on} measurable sets or functions.

More specifically, approximation and density play an important role in relaxation, $\Gamma$-convergence, integral representation, semicontinuity and many other aspects of the Calculus of Variations in which the topology of the function space is complemented by a variational functional to be minimized. In these applications 
it is important to approximate in the relevant topology and in energy. 
{In this respect,} the literature contains many approximation results 
for free discontinuity problems, mainly focused on either linear growth or discontinuity sets with finite measure, as appropriate for example for models of {concentration of plastic slip or {for the} Griffith model of} brittle fracture.
{Our main aim here is} approximation in energy without the assumption that the jump set has finite measure. One natural application of our result is the study of {superlinear} models of
cohesive fracture. 

{The functional framework to settle {this} kind of problems is provided by (a suitable subspace of) 
the space of} Special functions of Bounded Variation, introduced by De Giorgi and Ambrosio in \cite{DeGiorgiAmbrosio} to model a large class of problems which are described by a volume energy and a surface energy (e.g., mixtures of liquids, liquid crystals, image segmentation, fracture mechanics,~...). Indeed, {$SBV(\R^n;\R^m)$ is the set of} functions $u\in BV(\R^n;\R^m)$ whose distributional derivative has no Cantor part:
\[Du=\nabla u\mathcal L^n+[u]\otimes \nu_u\mathcal H^{n-1}\res J_u,\]
where $\nabla u$ is the approximate gradient and $J_u,\nu_u,[u]=u^+-u^-$ are respectively the jump set, its normal, and the amplitude {of the jump}, {see \cite{AFP} for the definitions.}

In these problems, the general form of the energy is
\begin{equation}
F[u,A]:=\int_A \Psi(x,\nabla u)dx+\int_{J_u\cap A }
g(x,u^+,u^-,{\nu_u})d\calH^{n-1},
\end{equation}
for $A\subset \R^n$ open and bounded, $\Psi$ and $g$ satisfying suitable growth and regularity properties, $u\in SBV(A;\R^m)$. If one is interested in the (possibly constrained) global minimization of $F$, lower semicontinuity and coercivity are further required in order to apply the direct method of the Calculus of Variations and to establish the existence of a solution.

For many applications it is of crucial importance to be able to approximate $u\in SBV(A;\R^m)$ {in 
$L^1(A;\R^m)$ and }in the sense of the energy by a sequence $u_j$ of more regular functions (for example piecewise regular), i.e., in a way that $F[u_j,A]\to F[u,A]$ as $j\to \infty$. This was the aim of several works appeared in the recent years. Braides and Chiadò-Piat in \cite[{Sect.~5}]{BraidesChiadoPiat} focus on functions $u\in SBV^p\subset SBV$, $p>1$, i.e. such that $\nabla u\in L^p$ and $\mathcal H^{n-1}(J_u)<\infty$.
For functions $u\in SBV^p\cap L^\infty$ they provide an approximation $u_j\in SBV^p$, regular out of a {closed} rectifiable set, satisfying
\begin{equation}\label{BCP}
u_j\to u \text{ {strongly} in } BV,\quad \nabla u_j\to \nabla u\text{ in }L^p,\quad \calH^{n-1}(J_{u_j}\triangle J_u)\to0.
\end{equation}
{Cortesani in \cite{cortesani1997strong} and Cortesani and Toader in \cite{CortesaniToader}}, on the positive side, improve this result, by constructing for $u\in SBV^p{\cap L^\infty}$, $p>1$, a sequence $u_j$ whose jump set is in addition piecewise regular, and precisely polyhedral. Moreover they get
\begin{eqnarray*}
&\nabla u_j\to \nabla u\text{ in }L^p,\\
&\displaystyle\limsup_{j\to\infty}\int_{J_{u_j}\cap {\overline A}}g(x,u^+_j,u^-_j,\nu_{u_j})d\mathcal H^{n-1}\leq \int_{J_{u}\cap {\overline A}}g(x,u^+,u^-,\nu_{u})d\mathcal H^{n-1},
\end{eqnarray*}
on $A\subset\subset \Omega$. On the negative side, they do not obtain strong convergence in $SBV$. 

The strong convergence in $SBV$ {holds for} the {result} by De Philippis, Fusco and Pratelli in \cite[Theorem C]{DePhilippisFuscoPratelli}, in which, for $u\in SBV^p$, $p>1$, the authors construct $u_j$ regular out of 
{the closure of its jump set,  which is actually essentially closed being contained in a compact $C^1$ manifold with $C^1$ boundary, and differs from it {only by} an $\calH^{n-1}$-negligible set.}

The previous {four} results have been crucial for many applications involving a penalization on the measure of the jump set. The case in which the jump set of $u$ is allowed to have infinite measure is quite different and few approximations are available in the literature.
{An extension of the result by Cortesani and Toader
to $BV$ was obtained in \cite{amar2005new} {in the setting of $BV$ strict convergence}.}
In \cite{KristensenRindler}, the approximation of any $BV$ function is obtained in the area-strict sense through countably piecewise affine functions with the same trace as $u$ at the boundary. A different approximation is provided in $SBV$ in \cite[Theorem B]{DePhilippisFuscoPratelli}. 
Precisely, the authors prove that if $u\in SBV$ with $\nabla u\in L^p$, $p>1$, then it is possible to construct $u_j$ regular out of {the closure of its jump set, which is actually essentially closed being {(up to $\calH^{n-1}$-null sets) a compact} $C^1$ manifold with $C^1$ boundary,} and satisfying
\[u_j\to u \text{ {strongly} in }BV,\quad \nabla u_j\to \nabla u\text{ in }L^p\,.\]
{In particular, the convergence $\calH^{n-1}(J_{u_j}\setminus J_u)\to0$ is not ensured.
Moreover, in case $p=1$, {the jump of} $u_j$ can be additionally taken contained in the intersection of a compact $C^1$ manifold with $C^1$ boundary and of the jump set of the function to be approximated (see \cite[Theorem A]{DePhilippisFuscoPratelli}).}
{Related density results, with different functional settings such as $(G)SBD$ or $BH$, have been obtained 
in the last years (see for example \cite{Chambolle,iur12,ContiFocardiIurlano2017IntRepr,friedrich2018piecewise,FriedrichSolombrino2018quasistatic,ContiFocardiIurlano2019-Density,crismale2019approximation,chambolle2019density} {and \cite{AABU,ABC}}, respectively).}

Although {all the quoted results} are important advances, they are in general not enough for many applications, not providing any information on the convergence of the surface term or of the total energy $F$ {in case that the measure of the jump set is not finite}. An easy example is that of an energy $F$ where $\Psi=\Psi(\nabla u)$ is superlinear for large gradients and $g=g([u],\nu_u)$ is superlinear for small amplitudes, the natural domain of finiteness being {(a subset of)} $SBV$. In this case, the only result available in the literature is \cite[{Sect.~4}]{BellettiniChambolleGoldman}, which however applies only to $u\in GSBV$ with $\nabla u=0$ {$\calL^n$-a.e. on $\Omega$}. The approximants satisfy $\nabla u_j=0$ {$\calL^n$-a.e. on $\Omega$} and have jump sets of finite measure. The convergence is that of $L^1$ together with the convergence of the energies.

In this paper, we {develop an original multiscale technique to approximate} {functions} $u\in SBV$ with jump set of possibly infinite measure and $\nabla u\in L^p$, with $p\geq 1$.
{We stress that {it} encompasses at the same time both {superlinear,} cohesive-type and {Griffith,} brittle-type surface energies as shown in Section~\ref{hypotheses}.}

\begin{theorem}\label{theodensityintro}\sloppypar
 Let $\Omega\subseteq\R^n$ be an open bounded Lipschitz set, $u\in SBV(\Omega;\R^m)$ such that 
 $\nabla u\in L^p(\Omega;\R^{m\times n})$ for some $p\in[1,\infty)$, and $g_0(|[u]|)\in L^1(\Omega;\calH^{n-1}\res J_u)$,
 with $g_0:[0,\infty)\to[0,\infty)$ continuous, nondecreasing, subadditive, and $g_0^{-1}(0)=\{0\}$.
 
 Then there are sequences $u_j\in SBV\cap L^\infty(\Omega;\R^m)$ and 
$\Phi_j\in \Lip(\R^n;\R^n)$ such that
 \begin{enumerate}
  \item\label{theodensityintrosimpl} for each $j$ there is a {locally finite} decomposition of $\R^n$ in simplexes such that $u_j$ is affine {in the interior of} each of them;
  \item\label{theodensityintroL1} $u_j\to u$ in $L^1(\Omega;\R^m)$;  
  \item\label{theodensityintroLp} $\nabla u_j\to\nabla u$ in $L^p(\Omega;\R^{m\times n})$;
  \item\label{theodensityintrobilip}\sloppypar $\Phi_j$ is bilipschitz, with $\Phi_j(x){-x}\to 0$ in $L^\infty(\R^n;\R^n)$, $\Dnabla\Phi_j\to \Id$ in $L^\infty(\R^n;\R^{n\times n})$,
  {and $\Phi_j(x)=x$ for $x\in \R^n\setminus\Omega$};
  \item \label{theodensityintrog0} 
one can choose the orientation of
{the normal $\nu_j$ to}
$J_{u_j}$ so that
\begin{equation}\label{theodensityintrog1}
\lim_j\int_{J_u\cup \Phi_j^{-1}(J_{u_j})}g_0(|[u]-[u_j]\circ\Phi_j|)\,d\calH^{n-1}= 0
\end{equation}
{(with $[u]=0$ outside $J_u$, and similarly for $u_j$)},
and
\begin{equation}\label{theodensityintrog2}
\lim_j\int_{J_u\cup \Phi_j^{-1}(J_{u_j})}g_0(|[u]|+|[u_j]\circ\Phi_j|)
 \bigl|\nu_u - \nu_j\circ\Phi_j \bigr|
\,d\calH^{n-1}= 0;
\end{equation}
\item\label{theodensityintroHn1}
if $\calH^{n-1}(J_{u})<\infty$, then also
$\calH^{n-1}({J_u{\triangle}\Phi_j^{-1}(J_{u_j})})\to0$;
\item\label{theodensityintrosbv0}
if $\nabla u=0$ $\calL^n$-almost everywhere on $\Omega$, then $\nabla u_j=0$ $\calL^n$-almost everywhere on $\Omega$ for all $j$. If instead $u\in W^{1,p}(\Omega;\R^m)$ then $u_j\in W^{1,p}(\Omega;\R^m)$ for all $j$.
\end{enumerate}
\end{theorem}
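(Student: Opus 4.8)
The plan is to construct the approximants through a multiscale decomposition of the jump set by the size of the amplitude $|[u]|$: on the part where $|[u]|$ is bounded below — automatically of finite $\calH^{n-1}$ measure — one argues as in the classical density results for $SBV^p$ with finite jump set, while on the part where $|[u]|$ is small, which carries the possibly infinite measure, one merges nearby small jumps onto a few polyhedral faces, using the subadditivity of $g_0$ to keep the surface energy under control. As a preliminary reduction one may assume $u\in L^\infty(\Omega;\R^m)$: for $M>0$ let $u^M:=\tau_M\circ u$ with $\tau_M\in C^1(\R^m;\R^m)$, $\tau_M=\Id$ on $B_M$, $\Lip\tau_M\le1$, $|\tau_M|\le M+1$; then $u^M\in SBV\cap L^\infty$, $u^M\to u$ in $L^1$, $|\nabla u^M|\le|\nabla u|$ and $\nabla u^M\to\nabla u$ in $L^p$, while $J_{u^M}\subseteq J_u$, $\nu_{u^M}=\nu_u$ on $J_{u^M}$, $|[u^M]|\le|[u]|$, $[u^M]\to[u]$ $\calH^{n-1}$-a.e.\ and $\calH^{n-1}(J_u\setminus J_{u^M})\to0$. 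By continuity of $g_0$, $g_0(0)=0$ and dominated convergence (dominating with $g_0(|[u]|)\in L^1(\calH^{n-1}\res J_u)$) one gets $g_0(|[u]-[u^M]|)\to0$ and $\int_{J_u\setminus J_{u^M}}g_0(|[u]|)\,d\calH^{n-1}\to0$, so, using subadditivity of $g_0$ to split the integrands in \eqref{theodensityintrog1}--\eqref{theodensityintrog2}, a diagonal argument reduces the theorem to the case $u\in L^\infty$; one also extends $u$ $SBV$-ly across $\overline\Omega$ so that all later constructions may be localized.

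Fix a threshold $\delta>0$ and set $J^{>\delta}:=\{x\in J_u:|[u](x)|>\delta\}$. Since $g_0$ is nondecreasing with $g_0^{-1}(0)=\{0\}$, $g_0(\delta)>0$, hence $\calH^{n-1}(J^{>\delta})\le g_0(\delta)^{-1}\int_{J_u}g_0(|[u]|)\,d\calH^{n-1}<\infty$, while $J^{\le\delta}:=J_u\setminus J^{>\delta}$ decreases to $\emptyset$ as $\delta\downarrow0$, so $\omega(\delta):=\int_{J^{\le\delta}}\big(|[u]|+g_0(|[u]|)\big)\,d\calH^{n-1}\to0$. We further split $J^{\le\delta}$ dyadically into bands $A_k:=\{2^{-k}\delta<|[u]|\le 2^{-k+1}\delta\}$, $k\ge1$, each of finite $\calH^{n-1}$ measure, and note that by subadditivity $g_0(2t)\le 2g_0(t)$, so on $A_k$ the density $g_0(|[u]|)$ is comparable, up to a factor $2$, to the constant $g_0(2^{-k}\delta)$, which will let us account surface energies band by band. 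On a small neighbourhood $U_\delta$ of the finite-measure, $\calH^{n-1}$-rectifiable set $J^{>\delta}$ I would argue as in \cite{DePhilippisFuscoPratelli,CortesaniToader}: cover $J^{>\delta}$ up to arbitrarily small measure by finitely many $C^1$ graphs, flatten each of them by a bilipschitz map $\Psi_\delta$ equal to $\Id$ outside $U_\delta$ and with $\Psi_\delta-\Id$ and $D\Psi_\delta-\Id$ small in $L^\infty$ (small because the graphs are covered at a fine scale), and approximate $u$ there by a piecewise affine function whose jump set is the polyhedral image of the straightened $J^{>\delta}$ and whose amplitude is close to $[u]$; away from $J_u$ entirely, $u$ is approximated by ordinary piecewise affine interpolation on a fine simplicial mesh.

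The core of the argument is the small-jump part, where $\calH^{n-1}(J^{\le\delta})$ may be infinite, so one can neither keep a polyhedral copy of $J^{\le\delta}$ nor — since $p$ can be large — diffuse these jumps into the gradient: spreading a jump of amplitude $s$ into a linear ramp of thickness $h$ costs $\sim(\text{area})\,(s/h)^p h$ in $\int|\nabla\cdot|^p$, which is $L^p$-prohibitive for $p>1$, so the jumps must be preserved. On a fine mesh of size $\ell$ one proceeds as follows. In each cell $Q$ and for each band $A_k$ with $k\le N$, replace $J_u\cap Q\cap A_k$ — whatever its measure — by $O(n)$ polyhedral faces inside $Q$ realizing, as the derivative of a piecewise constant map on $Q$ fitted to the trace of $u$ on $\partial Q$, the averaged flux of $[u]\otimes\nu_u$ over $J_u\cap Q\cap A_k$, with total face area comparable to $\calH^{n-1}(J_u\cap Q\cap A_k)$ and amplitude comparable to $2^{-k}\delta$; then subadditivity bounds the surface energy of these faces by $C\int_{J_u\cap Q\cap A_k}g_0(|[u]|)\,d\calH^{n-1}$, and summing over $Q$ and over $k\le N$ gives a contribution $\le C\,\omega(\delta)$. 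The residual bands $k>N$ are merged, cell by cell, into $O(n)$ further faces carrying the averaged flux of $[u]\otimes\nu_u$ over $J_u\cap Q\cap\{|[u]|\le 2^{-N}\delta\}$; their surface energy has the form $\sum_Q g_0(\ell^{1-n}f_Q)\ell^{n-1}$ with $f_Q:=\int_{J_u\cap Q\cap\{|[u]|\le 2^{-N}\delta\}}|[u]|\,d\calH^{n-1}$ and $\sum_Q f_Q=\theta_N\to0$ as $N\to\infty$, and using $g_0(x)\le(x/s+1)g_0(s)$ one estimates it by $g_0(s)\big(s^{-1}\theta_N+|\Omega|\ell^{-1}\big)$, which is made $o(1)$ by first choosing $s=s(\ell)\to0$ with $g_0(s)\,|\Omega|\ell^{-1}\to0$ and then $N=N(\ell,s)$ so large that $\theta_N\le s\ell/|\Omega|$. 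The small curl-mismatch between the realized fluxes and the true jump measure, together with the interpolation error of the absolutely continuous part, is absorbed into a gradient perturbation small in $L^p$; and the total $L^1$ error of all cell-wise replacements is $\lesssim\ell\int_{J^{\le\delta}}|[u]|$.

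Combining the two parts on a common mesh one obtains, for each choice of parameters, a single globally piecewise affine $u_j$ and the bilipschitz $\Phi_j=\Psi_{\delta_j}$; letting $\delta_j\to0$, $\ell_j\to0$, $N_j\to\infty$ in the order dictated above and diagonalizing with the preliminary reduction yields \eqref{theodensityintroL1}--\eqref{theodensityintrog2} — conclusion \eqref{theodensityintrobilip} because $\Phi_j\ne\Id$ only near $J^{>\delta_j}$ at scale $\ell_j\to0$, and \eqref{theodensityintrog0} because the pieces of $J_u$ not matched by $\Phi_j^{-1}(J_{u_j})$ and the pieces of $\Phi_j^{-1}(J_{u_j})$ not matched by $J_u$ all carry $g_0$-weighted amplitude $\le C\,\omega(\delta_j)\to0$ — while \eqref{theodensityintroHn1} follows when $\calH^{n-1}(J_u)<\infty$ since then $\calH^{n-1}(J^{\le\delta_j})\to0$ and the polyhedral approximation of the finite-measure part converges in measure, and \eqref{theodensityintrosbv0} by taking piecewise constant (respectively, genuine piecewise affine $W^{1,p}$) pieces on the relevant cells. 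The hard part is this small-jump step: reconciling a \emph{finite} polyhedral jump set with a surface energy controlled by $\int g_0(|[u]|)$ for an arbitrary continuous, nondecreasing, subadditive $g_0$ — with no concavity and possibly sublinear growth at $0$ — while at the same time keeping $\nabla u_j$ close to $\nabla u$ in $L^p$ for every $p\in[1,\infty)$ and gluing all the local pieces into one genuine piecewise affine map matching the boundary data.
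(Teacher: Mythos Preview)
Your strategy differs substantially from the paper's, and the crucial small-jump step is not actually carried out. The paper does \emph{not} decompose $J_u$ by amplitude. Instead it first identifies a scale $\delta>0$ at which, in every cube $Q_z$ of side $\delta$, $\nabla u$ is $L^p$-close to a constant and the finite measure $\mu_u:=g_0(|[u]|)\calH^{n-1}\res J_u$ is concentrated near a single $C^1$ graph $L_z$ with a roughly constant jump $s_z$ (Proposition~\ref{propdeltafromtheta}, via a blow-up for $\mu_u$). Then on a much finer simplicial mesh of size $\eps\ll\delta$ it extends $u|_{Q_z\cap L_z^\pm}$ across $L_z$ to get $U_z^\pm$, applies an explicit piecewise-affine projection $\Pi_{\eps,\zeta}$ (Propositions~\ref{propsimplex}--\ref{propproj}) separately to each, and glues along a polyhedral interpolant $H_z$ of $L_z$; the bilipschitz $\Phi_j$ sends $L_z$ to $H_z$. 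The key device is Proposition~\ref{propsimplex}: on a single simplex, given vertex values and prescribed edge jumps $s_{ij}$, it returns a piecewise affine function whose gradient is controlled \emph{only} by $\xi_{ij}=u_j-u_i-s_{ij}$ and whose jump only by $s_{ij}$. Averaging over shifts $\zeta$ then yields global $L^p$ and $g_0$ bounds from one-dimensional slices. This is precisely the mechanism that keeps the possibly infinite measure of $J_u$ from contaminating the gradient.

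Your proposal leaves that mechanism at the level of a wish. The sentence ``replace $J_u\cap Q\cap A_k$ \dots\ by $O(n)$ polyhedral faces realizing the averaged flux, with total face area comparable to $\calH^{n-1}(J_u\cap Q\cap A_k)$ and amplitude comparable to $2^{-k}\delta$'' cannot be right as written: $O(n)$ faces in a cube of side $\ell$ have area at most $c_n\ell^{n-1}$, while $\calH^{n-1}(J_u\cap Q\cap A_k)$ may be arbitrarily large; if instead you keep only the averaged flux you lose the trace constraint on $\partial Q$, and a piecewise constant map on $Q$ cannot be ``fitted to the trace of $u$'' unless that trace is itself piecewise constant. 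More seriously, you never explain how the cell-wise pieces glue into one globally piecewise affine map without creating a jump on \emph{every} inter-cell face; that alone gives a jump set of area $\sim|\Omega|/\ell$ with amplitudes you do not control, and the ``small curl-mismatch \dots\ absorbed into a gradient perturbation small in $L^p$'' is exactly the step you correctly flag as forbidden for $p>1$. Your final paragraph essentially concedes that this is the hard part and does not resolve it. The paper's edge-based simplex construction, averaged over shifts, is the missing ingredient; there is no analogue of it in your outline.
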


Few remarks are in order. First, {since $\Phi_j(\Omega)=\Omega$, the integrals in \eqref{theodensityintrog1} and \eqref{theodensityintrog2} are over subsets of $\Omega$. Then,}
thanks to the subadditivity of $g_0$ from items \ref{theodensityintrobilip}  and \ref{theodensityintrog0}
{it follows that}
  \[
\lim_j\int_{J_{u_j}}g_0(|[u_j]|)\,d\calH^{n-1}= \int_{J_{u}}g_0(|[u]|)\,d\calH^{n-1}
  \]
  {(see the proof of Corollary~\ref{theodensity Psi g} below).}
Moreover, under suitable assumptions discussed in details in Section~\ref{hypotheses}, we can deduce 
the convergence of surface energies with density $g:\R^m\times S^{n-1}\to[0,\infty)$ depending suitably 
on the full jump and the normal. Finally, if $\Psi\in C^0(\R^{m\times n})$ has $p$-growth
(cf.~again Section~\ref{hypotheses}) then {\ref{theodensityintroLp} implies}
\[
 \lim_j\int_\Omega\Psi(\nabla u_j)\,dx=\int_\Omega\Psi(\nabla u)\,dx\,.
\]
In addition, the sequence $(u_j)_{j\in\mathbb N}$ can be chosen such that the convergence to $u$ is 
stronger, namely {strict} in $BV$ and in area, {see Corollary~\ref{theodensity Psi g area} below.

We stress that energies with bulk density $\Psi$ and surface density $g$ as above are in general not {$L^1$ or weakly$^\ast$}-$BV$ lower semicontinuous. Hence, our approximations can be used to prove relaxation formulas in the spirit of \cite{Ambrosio,BraidesCoscia,bou-bra-but}. This will be the object of future work in \cite{CFI23}.
	
The proof of Theorem \ref{theodensityintro} is obtained through an explicit construction in several steps. First, $u$ can be extended to a function {defined on}  a slightly larger set at a small energy cost. This is not achieved by local reflections at the boundary and a partition of unity process {as usually done}, which would require $u\in L^p$. It is {rather} pursued through a regularization of the normal vector at the boundary and the definition of a bilipschitz map which swaps an inner  neighborhood of the boundary with an outer one. Further details can be found in Section \ref{a:extension}.
	
We employ next a multiscale approach. More precisely, we
find a suitable scale $\delta>0$, such that $\nabla u$ is close to a constant and $J_u$ is close to a $C^1$ manifold in each cube of side $\delta$ of a partition of $\R^n$. This is the object of Proposition~\ref{propdeltafromtheta}. At this point, we introduce a second scale $\eps\ll\delta$. In each cube of side $\delta$ we consider a finer triangulation with simplexes of diameter less than $c\eps$ and volume larger than $c\eps^n$.
The heart of the paper is Proposition~\ref{propsimplex}, which, given the values of $u$ on the vertices of a single simplex, and two vectors for each edge, representing the cumulated jump and the {average} gradient of $u$ on the edge, provides a piecewise affine interpolation, whose gradient and jump can be estimated respectively only through the given gradient vector or the given jump vector (see Figure \ref{fig:trianglev}). Proposition \ref{propsimplex} is then employed in Proposition \ref{propproj} (see Figure \ref{figpropproj}) to define a global projection, with good energy estimates, of any $SBV$ function on the space of piecewise affine functions.

The proof of Theorem \ref{theodensityintro}
{contains a few additional steps,} since the direct application of Proposition \ref{propproj} to the given $u$ would provide a piecewise affine approximation with surface energy controlled {only} up to a multiplicative factor by the surface energy of $u$.
To avoid this problem, we first {consider the extensions $U^\pm$ of $u$ with respect to the $C^1$ manifold approximating $J_u$ in {each} cube of side $\delta$. We then apply the previous projection to $U^\pm$. We finally introduce a piecewise affine interpolation of the $C^1$ manifold and define the approximation of $u$ as the projections of $U^\pm$ on the two sides of it}. This is performed in the proof of Theorem \ref{theodensityintro} in Section \ref{s:globalconstruction}, see also Figure \ref{figgrids2}.

The structure of the paper is the following. In Section \ref{hypotheses} we provide {several consequences} of Theorem \ref{theodensityintro}, in particular we show that the approximating sequence can be constructed such that it converges also $BV$-{strictly}, area-{strictly and $BV$-strongly after composition with a bilipschitz map}. Section \ref{s:techresults} addresses two {key} technical issues:
the extension tool in Section \ref{a:extension}
and the regularization at scale $\delta$ in Section \ref{s:approxreg}. Section \ref{s:proof} is devoted to the proof of Theorem \ref{theodensityintro}. Precisely, Section \ref{s:onesimplex} contains the construction of a relevant piecewise affine interpolation on a single simplex. Section \ref{s:projection} applies such construction to produce a piecewise affine approximation of a given $SBV$ function. Finally, Section \ref{s:globalconstruction} provides the full proof of Theorem \ref{theodensityintro} by applying the {projection} of Section \ref{s:projection} to {the extensions of $u$ on the two sides of the regularized jump set and by defining the approximation of $u$ as such projections on the two sides of a
{suitable perturbation of a}
piecewise interpolation of the regularized jump set.}

\section{{Consequences of the approximation theorem}}\label{hypotheses}
We {discuss here some} consequences of Theorem~\ref{theodensityintro}.
To this aim we fix $p\in[1,\infty)$ and consider $\Psi\in C^0(\R^{m\times n})$ obeying for some ${C}>0$,
\begin{equation}\label{e:Psi growth}
{|\Psi (\xi)|\le {C} (|\xi|^p+1).}
\end{equation}
{Throughout the paper $C$ will denote a constant, possibly depending on the dimension (if not otherwise specified) and changing from line to line.}
Next we select {a function} $g_0:[0,\infty)\to[0,\infty)$ 
{which represents a modulus of continuity of the surface energy $g$ introduced below (see in particular \eqref{e:hp g3 bis})}
satisfying:
\newcommand\Hgzerouno{(H$^{g_0}_1$\!)}
\newcommand\Hgzerodue{(H$^{g_0}_2$\!)}
\newcommand\Hguno{(H$^g_1$)\!}
\newcommand\Hgdue{(H$^g_2$)\!}
\newcommand\Hgtre{(H$^g_3$)\!}
\newcommand\Hgquattro{(H$^g_{3'}$)\!}
\begin{itemize}
 \item[\Hgzerouno] $g_0$ is continuous, nondecreasing, and $g_0^{-1}(0)=\{0\}$,
 \item[\Hgzerodue] $g_0$ is subadditive, namely for every $(t,t')\in[0,\infty)\times[0,\infty)$
 \[ 
 g_0(t+t')\le g_0(t)+g_0(t')\,.
 \]
\end{itemize}
For example {either $g_0(t)=1\wedge t^q$ or $g_0(t)=t^q$,} for $q\in (0,1]$, will do. Note that by subadditivity and
continuity of $g_0$ in zero, for every $\lambda>0$ there is $C_{\lambda}>0$ such
that for all $t\in[0,\infty)$
\begin{equation}\label{e:g0 growth}
g_0(t)\le \lambda + C_{\lambda}{t}.
\end{equation}

Then we consider any function $g\in C^0(\R^m\times S^{n-1};[0,\infty))$, such that
\begin{itemize}
 \item[\Hguno] $g(-s,-\nu)=g(s,\nu)$ for all $(s,\nu)\in\R^m\times S^{n-1}$;
 \item[\Hgdue] for all $(s,s',\nu)\in\R^m\times\R^m\times S^{n-1}$ 
 \begin{equation}\label{e:hp g3}
 g(s+s',\nu)\le g(s,\nu)+{C}\,g_0(|s'|);
 \end{equation}
 \end{itemize}
 {and either}
\begin{itemize}
 \item[\Hgtre]
{ $g(0,\nu)=0$  for all $\nu\in S^{n-1}$}
 \end{itemize}
 {or}
\begin{itemize}
\item[\Hgquattro]
{there is $\alpha>0$ such that
 $g(0,\nu)\ge\alpha$  for all $\nu\in S^{n-1}$.}
\end{itemize}
{Thanks to assumption {\Hguno}, the surface energy with density $g$ {is well defined as it does}
not depend on the chosen orientation of the normal to the jump set}.
{Assumption {\Hgtre} is useful to model cohesive-type energies, such as for example the one of the Barenblatt model. Assumption {\Hgquattro} is instead useful for surface energies typical of brittle fracture, such as the one of the Griffith model (or, in the scalar case, of the Mumford-Shah model) for which $g$ is constant.}

Exchanging the roles of $s$ and $s+s'$ in \eqref{e:hp g3} yields that
\begin{equation}\label{e:hp g3 bis}
|g(s+s',\nu)- g(s,\nu)|\leq {C}\,g_0(|s'|).
\end{equation}
Moreover, {if {\Hgtre} holds,} the latter estimate {with $s'=-s$} implies that for all $(s,\nu)\in\R^m\times S^{n-1}$
 \begin{equation}\label{e:g0}
 g(s,\nu)\le {C}\,g_0(|s|)\,.
 \end{equation}
 {If instead {\Hgquattro} holds, then by continuity there is also $\beta>0$ such that
 $g(0,\nu)\le\beta$ for all $\nu$, and in particular {for all $(s,\nu)\in\R^m\times S^{n-1}$}
 \begin{equation}\label{eqgbetag0}
  g(s,\nu)\le \beta+{C}\,g_0(|s|).
 \end{equation}
}

For $u\in SBV(\Omega;\R^m)$ and for a Borel set $A\subseteq\Omega$, we define
the energy
\begin{equation*}
 E_{\Psi,g}[u,A]:=\int_A \Psi(\nabla u)dx+\int_{J_u\cap A }
g([u],{\nu_u})d\calH^{n-1}\,,
\end{equation*}
where for any $u\in SBV(\Omega;\R^m)$ we denote by $[u]$ the function which is 
the usual jump of $u$ on $J_u$ and 0 on $\Omega\setminus J_u$.

\begin{corollary}\label{theodensity Psi g}
 Under the assumptions of Theorem~\ref{theodensityintro}, the sequence $(u_j)_{j\in\N}$ 
 introduced there satisfies
\begin{equation}\label{e:Psi conv}
\lim_j\int_\Omega\Psi(\nabla u_j)dx=\int_\Omega\Psi(\nabla u)dx\,,
\end{equation}
\begin{equation} \label{e:g conv}
\lim_j\int_{J_{u_j}}g([u_j],\nu_{u_j})d\calH^{n-1}=\int_{J_{u}}g([u],\nu_u)d\calH^{n-1}
\end{equation}
for all functions $\Psi\in C^0(\R^{m\times n})$ satisfying \eqref{e:Psi growth}, 
and {all} $g\in C^0(\R^m\times S^{n-1};[0,\infty))$ satisfying \Hguno, \Hgdue, and \Hgtre. In particular,
\[
\lim_jE_{\Psi,g}[u_j,\Omega]=E_{\Psi,g}[u,\Omega]\,.
\]
\end{corollary}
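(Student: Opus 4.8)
The plan is to deduce \eqref{e:Psi conv} and \eqref{e:g conv} from Theorem~\ref{theodensityintro} by elementary measure-theoretic arguments. The bulk part \eqref{e:Psi conv} is immediate: since $\nabla u_j\to\nabla u$ in $L^p$ by item~\ref{theodensityintroLp}, $\Psi$ is continuous and $|\Psi(\xi)|\le C(|\xi|^p+1)$, the Nemytskii map $v\mapsto\Psi(v)$ is continuous from $L^p(\Omega)$ to $L^1(\Omega)$; concretely, from any subsequence extract a further one with $\nabla u_j\to\nabla u$ $\calL^n$-a.e.\ and $|\nabla u_j|\le h$ a.e.\ for some $h\in L^p(\Omega)$, so that $\Psi(\nabla u_j)\to\Psi(\nabla u)$ a.e.\ dominated by $C(h^p+1)\in L^1(\Omega)$ (here $\Omega$ bounded), and conclude by dominated convergence and the usual subsequence argument. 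All the real work is in the surface term.

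For \eqref{e:g conv} I would first change variables through $\Phi_j$. Write $\nu_j$ for the chosen orientation of the normal to $J_{u_j}$ (by {\Hguno} the choice does not affect $g$). Since $\Phi_j$ is bilipschitz with $\Phi_j(\Omega)=\Omega$ by item~\ref{theodensityintrobilip}, the set $\Phi_j^{-1}(J_{u_j})$ is $(n-1)$-rectifiable and the area formula for Lipschitz maps on rectifiable sets gives
\[
\int_{J_{u_j}}g([u_j],\nu_j)\,d\calH^{n-1}=\int_{\Phi_j^{-1}(J_{u_j})}g([u_j]\circ\Phi_j,\nu_j\circ\Phi_j)\,J_j\,d\calH^{n-1},
\]
with $J_j$ the tangential $(n-1)$-Jacobian of $\Phi_j$ along $\Phi_j^{-1}(J_{u_j})$; since $\Dnabla\Phi_j\to\Id$ in $L^\infty$, $\|J_j-1\|_{L^\infty}\to0$. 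By \eqref{e:g0} (which uses assumption {\Hgtre}), subadditivity of $g_0$, the convention $[u]=0$ off $J_u$, the hypothesis $\int_{J_u}g_0(|[u]|)\,d\calH^{n-1}<\infty$, and \eqref{theodensityintrog1}, the integral $I_j:=\int_{\Phi_j^{-1}(J_{u_j})}g([u_j]\circ\Phi_j,\nu_j\circ\Phi_j)\,d\calH^{n-1}$ is bounded uniformly in $j$; hence $J_j$ may be replaced by $1$ up to an $o(1)$ error, and it remains to prove $I_j\to I:=\int_{J_u}g([u],\nu_u)\,d\calH^{n-1}$.

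I would compare $I_j$ and $I$ by splitting both over the partition of $J_u\cup\Phi_j^{-1}(J_{u_j})$ into $\Phi_j^{-1}(J_{u_j})\cap J_u$, $\Phi_j^{-1}(J_{u_j})\setminus J_u$, and $J_u\setminus\Phi_j^{-1}(J_{u_j})$. On the last two pieces one of the two jumps vanishes, so by \eqref{e:g0} the relevant integrands are $\le C\,g_0(|[u]-[u_j]\circ\Phi_j|)$ and their integrals tend to $0$ by \eqref{theodensityintrog1}. On $\Phi_j^{-1}(J_{u_j})\cap J_u$ I would estimate
\[
\bigl|g([u_j]\circ\Phi_j,\nu_j\circ\Phi_j)-g([u],\nu_u)\bigr|\le\bigl|g([u_j]\circ\Phi_j,\nu_j\circ\Phi_j)-g([u],\nu_j\circ\Phi_j)\bigr|+\bigl|g([u],\nu_j\circ\Phi_j)-g([u],\nu_u)\bigr|,
\]
where by \eqref{e:hp g3 bis} the first summand is $\le C\,g_0(|[u]-[u_j]\circ\Phi_j|)$, so its integral over $\Phi_j^{-1}(J_{u_j})\cap J_u$ tends to $0$ by \eqref{theodensityintrog1}.

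The main obstacle is the remaining term $\int_{\Phi_j^{-1}(J_{u_j})\cap J_u}|g([u],\nu_j\circ\Phi_j)-g([u],\nu_u)|\,d\calH^{n-1}$: $g$ is merely continuous, so its modulus of continuity in $\nu$ degenerates for large $|s|$, while on the region where $|[u]|$ is small the set $J_u$ may have infinite $\calH^{n-1}$ measure, so neither uniform continuity nor \eqref{theodensityintrog2} can be invoked globally. I would fix $\eta>0$ and split according to $\{|[u]|\le\rho\}$, $\{\rho<|[u]|\le R\}$, $\{|[u]|>R\}$. On the first and the third the integrand is $\le 2C\,g_0(|[u]|)$ by \eqref{e:g0}, and $\int_{\{|[u]|\le\rho\}\cap J_u}g_0(|[u]|)\,d\calH^{n-1}\to0$ as $\rho\to0$ (since $g_0(|[u]|)\in L^1(\calH^{n-1}\res J_u)$ and $|[u]|>0$ on $J_u$), while $\int_{\{|[u]|>R\}\cap J_u}g_0(|[u]|)\,d\calH^{n-1}\to0$ as $R\to\infty$ (tail of an integrable function); choosing $\rho$ small and $R$ large makes these contributions $\le\eta$, uniformly in $j$. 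On $\{\rho<|[u]|\le R\}\cap J_u\cap\Phi_j^{-1}(J_{u_j})$, whose measure is at most $g_0(\rho)^{-1}\int_{J_u}g_0(|[u]|)\,d\calH^{n-1}<\infty$, monotonicity of $g_0$ and \eqref{theodensityintrog2} give, for the integral over that set,
\[
g_0(\rho)\int|\nu_u-\nu_j\circ\Phi_j|\,d\calH^{n-1}\le\int_{J_u\cup\Phi_j^{-1}(J_{u_j})}g_0(|[u]|+|[u_j]\circ\Phi_j|)\,|\nu_u-\nu_j\circ\Phi_j|\,d\calH^{n-1}\to0,
\]
so $\nu_j\circ\Phi_j\to\nu_u$ in $L^1$ on this finite-measure set; as $g$ is uniformly continuous on $\{|s|\le R\}\times S^{n-1}$ with some modulus $\omega_R$, dominated convergence yields $\int\omega_R(|\nu_u-\nu_j\circ\Phi_j|)\,d\calH^{n-1}\to0$, which bounds the corresponding part of the integrand. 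Letting $j\to\infty$ and then $\eta\to0$ gives $I_j\to I$, hence \eqref{e:g conv}; adding \eqref{e:Psi conv} gives $E_{\Psi,g}[u_j,\Omega]\to E_{\Psi,g}[u,\Omega]$. Finally, running the whole surface argument with the admissible choice $g(s,\nu):=g_0(|s|)$ (which obeys {\Hguno}, {\Hgdue} with $C=1$, and {\Hgtre}) yields $\int_{J_{u_j}}g_0(|[u_j]|)\,d\calH^{n-1}\to\int_{J_u}g_0(|[u]|)\,d\calH^{n-1}$, the identity announced after Theorem~\ref{theodensityintro}.
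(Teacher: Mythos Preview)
Your proof is correct and follows the same overall strategy as the paper: a Nemytskii/dominated-convergence argument for the bulk term, the area formula through $\Phi_j$ with the tangential Jacobian replaced by $1$ via item~\ref{theodensityintrobilip}, the uniform bound on $I_j$ via \eqref{e:g0} and subadditivity of $g_0$, the three-piece decomposition of $J_u\cup\Phi_j^{-1}(J_{u_j})$, and the splitting of the integrand on the overlap into a jump part (controlled by \eqref{e:hp g3 bis} and \eqref{theodensityintrog1}) and a normal part.

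The only substantive difference is in the normal term $\int_{J_u\cap\Phi_j^{-1}(J_{u_j})}|g([u],\nu_j\circ\Phi_j)-g([u],\nu_u)|\,d\calH^{n-1}$. You localize via the truncation $\{|[u]|\le\rho\}\cup\{\rho<|[u]|\le R\}\cup\{|[u]|>R\}$, using integrability of $g_0(|[u]|)$ on the tails and uniform continuity of $g$ plus finite measure in the middle. The paper instead observes that \eqref{theodensityintrog1} and \eqref{theodensityintrog2}, together with $g_0^{-1}(0)=\{0\}$, force $\chi_{\Phi_j^{-1}(J_{u_j})}\to1$ and $\nu_j\circ\Phi_j\to\nu_u$ $\calH^{n-1}$-a.e.\ on $J_u$ (after passing to a subsequence), and then applies dominated convergence directly with dominant $Cg_0(|[u]|)$ from \eqref{e:g0}. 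The paper's route is shorter and avoids the truncation bookkeeping; your route is more explicit and sidesteps the (implicit) subsequence extraction needed to pass from the integral convergence in \eqref{theodensityintrog1}--\eqref{theodensityintrog2} to pointwise convergence. Either way the conclusion is the same.
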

We stress that the assumptions of Theorem \ref{theodensityintro} include in particular integrability of 
$g_0(|[u]|)$ and ensure via {\eqref{e:Psi growth} and}
\eqref{e:g0} that $E_{\Psi,g}[u,\Omega]$ is finite.
\begin{proof}
Standing the $L^p$ convergence of  $(\nabla u_j)_{j\in\N}$ to $\nabla u$,
we may consider a subsequence, which we do not relabel, such that
 \[
 \limsup_j\left|\int_\Omega(\Psi(\nabla u_j)-\Psi(\nabla u))dx\right|
 \]
is actually a limit, and $(\nabla u_j)_{j\in\N}$ converges to $\nabla u$ $\calL^n$-almost everywhere on $\Omega$.
Thanks to Egorov's theorem, for every $\eps>0$
{there is $E$ with
$|\Omega\setminus E|\leq\eps$ such that
$\nabla u\in L^\infty(E;\R^{m\times n})$ and
$(\nabla u_j)_{j\in\N}$ converges to $\nabla u$ uniformly on $E$.} Therefore, we may use
\eqref{e:Psi growth} and item (iii) in Theorem~\ref{theodensityintro} to get
\begin{align*}
 \limsup_j&\left|\int_\Omega(\Psi(\nabla u_j)-\Psi(\nabla u))dx\right|
 =\limsup_j\left|\int_{\Omega\setminus E}(\Psi(\nabla u_j)-\Psi(\nabla u))dx\right|\\
&\leq C\Big(\int_{\Omega\setminus E}|\nabla u|^pdx+|\Omega\setminus E|\Big)\,.
\end{align*}
The conclusion then follows as $\eps\downarrow 0$ by absolute continuity. {As the limit is unique, convergence holds for the entire sequence.}

We next deal with \eqref{e:g conv}. To this aim we first use the Area formula
(cf. \cite[Theorem~2.91]{AFP}, {with $f=\Phi_j$ and $E=\Phi_j^{-1}(J_{u_j})$})
{which reads
\begin{equation}\label{eqareaguj}
 \int_{J_{u_j}} g([u_j],\nu_{u_j})
 d\calH^{n-1}= \int_{\Phi_j^{-1}(J_{u_j})}g([u_j]\circ\Phi_j,\nu_{u_j}\circ\Phi_j)
 {{\bf J}_{n-1}d^{\Phi_j^{-1}(J_{u_j})}\Phi_j}
 d\calH^{n-1}
.
\end{equation}
We write ${{\bf J}_{n-1}d^{\Phi_j^{-1}(J_{u_j})}\Phi_j}$
for the tangential Jacobian and remark that if
	$\Phi_j$ is differentiable then 
	${\bf J}_{n-1}d^{\Phi_j^{-1}(J_{u_j})}\Phi_j=
	|\cof(\Dnabla\Phi_j)(\nu_{u_j}\circ\Phi_j)|$.
For $\calH^{n-1}$-almost every  $x\in \Phi_j^{-1}(J_{u_j})$,
the map $\Phi_j$ is differentiable in $x$ in the directions of the tangent space.
The same holds for $y\mapsto \Phi_j(y)-y$, which is Lipschitz with Lipschitz constant {bounded by} $\|\Dnabla\Phi_j-\Id\|_{L^\infty(\R^n)}$. Therefore
for $\calH^{n-1}$-almost every $x\in \Phi_j^{-1}(J_{u_j})$ we have
$|{\bf J}_{n-1}d^{\Phi_j^{-1}(J_{u_j})}\Phi_j-1|(x)\le C \|\Dnabla\Phi_j-\Id\|_{L^\infty(\R^n)}$.
By \ref{theodensityintrobilip}, the last expression converges to 0.

We observe that by subadditivity of $g_0$
\begin{equation*}\begin{split}
  &\int_{\Phi_j^{-1}(J_{u_j})}g_0(|[u_j]\circ\Phi_j|)d\calH^{n-1}\\
  &\le 
\int_{J_u\cup \Phi_j^{-1}(J_{u_j})}g_0(|[u]-[u_j]\circ\Phi_j|)\,d\calH^{n-1}
+
\int_{J_u}g_0(|[u]|)\,d\calH^{n-1}.
\end{split}\end{equation*}
Using 
 \eqref{theodensityintrog1}
 and the assumption that $g_0(|[u]|)\in L^1(\Omega, \calH^{n-1}\LL J_u)$ we obtain that
\begin{equation}\label{eqPhihujg0finite}
  \int_{\Phi_j^{-1}(J_{u_j})}g_0(|[u_j]\circ\Phi_j|)d\calH^{n-1}
  \le C<\infty
\end{equation}
 for all $j$.
By {\eqref{eqareaguj},} the growth condition in \eqref{e:g0}, {{and the} last step \eqref{eqPhihujg0finite},} we obtain}
\begin{align}\label{e:salto circle Phij}
& \left|\int_{J_{u_j}} g([u_j],\nu_{u_j})d\calH^{n-1}-
 \int_{\Phi_j^{-1}(J_{u_j})}g([u_j]\circ\Phi_j,\nu_{u_j}\circ\Phi_j)d\calH^{n-1}\right|
 \notag\\
 &\leq  {\|1-{{\bf J}_{n-1}d^{\Phi_j^{-1}(J_{u_j})}\Phi_j}\|_{L^\infty(\Phi_j^{-1}(J_{u_j});\calH^{n-1})}}
 \int_{\Phi_j^{-1}(J_{u_j})}g([u_j]\circ\Phi_j,\nu_{u_j}\circ\Phi_j)d\calH^{n-1}\notag\\
 &\leq o(1) \int_{\Phi_j^{-1}(J_{u_j})}g_0(|[u_j]\circ\Phi_j|)d\calH^{n-1}=o(1)\,.
\end{align}
Using {\eqref{theodensityintrog1} and} \eqref{theodensityintrog2} in Theorem \ref{theodensityintro} and  the fact that $g_0$ is nondecreasing with $g_0^{-1}(0)=\{0\}$ we deduce
$\chi_{\Phi_j^{-1}(J_{u_j})}\to1$ and  
$\nu_{u_j}\circ\Phi_j\to\nu_u$, $\calH^{n-1}$-almost everywhere on $J_u$.
Thus, $\chi_{\Phi_j^{-1}(J_{u_j})}\nu_{u_j}\circ\Phi_j\to\nu_u$, $\calH^{n-1}$-almost everywhere on $J_u$.
Dominated convergence, which we can use by \eqref{e:g0} and integrability of $g_0(|[u]|)$, then yields
\begin{equation}\label{eqchangenormal}
 \limsup_j\int_{J_{u}{\cap \Phi_j^{-1}(J_{u_j})}} \left| g([u],\nu_{u_j}\circ\Phi_j)-g([u],\nu_u)\right|d\calH^{n-1}=0.
\end{equation}
Moreover, {\eqref{theodensityintrog1} in} {Theorem \ref{theodensityintro}\ref{theodensityintrog0}} yields {(with \eqref{e:g0})} that
\begin{equation}\label{eqchangenormal-1}
 \limsup_j\left(\int_{J_u\setminus \Phi_j^{-1}(J_{u_j})} g([u],\nu_u)d\calH^{n-1}
+\int_{\Phi_j^{-1}(J_{u_j})\setminus J_{u}} g([u_j]\circ\Phi_j,\nu_{u_j}\circ\Phi_j)d\calH^{n-1}\right)=0.
\end{equation}
Therefore, we conclude that
\begin{align*}
 & \limsup_j\left|\int_{J_{u_j}} g([u_j],\nu_{u_j})d\calH^{n-1}-\int_{J_u}g([u],\nu_u)d\calH^{n-1}\right|\\
 &\leq \limsup_j\left|\int_{\Phi_j^{-1}(J_{u_j})}g([u_j]\circ\Phi_j,\nu_{u_j}\circ\Phi_j)d\calH^{n-1}
 -\int_{J_{u}} g([u],\nu_u)d\calH^{n-1}\right|\\
 &\leq \limsup_j\int_{\Phi_j^{-1}(J_{u_j})\cap J_{u}}\big|g([u_j]\circ\Phi_j,{\nu_{u_j}\circ\Phi_j})
 - g([u],\nu_u)\big|d\calH^{n-1}\\
 &{\leq \limsup_j\int_{\Phi_j^{-1}(J_{u_j})\cap J_{u}}\big|g([u_j]\circ\Phi_j,{\nu_{u_j}\circ\Phi_j})
 - g([u],{\nu_{u_j}\circ\Phi_j})\big|d\calH^{n-1}}\\
 &\leq {C}\limsup_j \int_{\Phi_j^{-1}(J_{u_j})\cap J_u}
 \,g_0(|[u]-[u_j]\circ\Phi_j|)\,d\calH^{n-1}=0\,,
 \end{align*}
where we have used {\eqref{e:salto circle Phij} in the first inequality,
\eqref{eqchangenormal-1} in the second one,} \eqref{eqchangenormal} in the third one,
\eqref{e:hp g3 bis} in the fourth one, and {\eqref{theodensityintrog1} in} 
{Theorem~\ref{theodensityintro}\ref{theodensityintrog0}} in the last equality.
\end{proof}

We next show how to treat the case that $g$ is bounded from below, in which {{\Hgquattro} holds.}
{We stress that the case of the Mumford-Shah energy functional corresponds to the choices
$\Psi=|\cdot|^2$ and $g\equiv1$ as ${|[u]|}>0$ on $J_u$ for $u\in SBV$.}
\begin{corollary}\label{theodensityMS}
 Under the assumptions of Theorem~\ref{theodensityintro}, if $\calH^{n-1}(J_u)<\infty$ the sequence $(u_j)_{j\in\N}$ 
 introduced there satisfies
\begin{equation}\label{e:Psi convMS}
\lim_j\int_\Omega\Psi(\nabla u_j)dx=\int_\Omega\Psi(\nabla u)dx\,,
\end{equation}
\begin{equation} \label{e:g convMS}
\lim_j\int_{J_{u_j}}g([u_j],\nu_{u_j})d\calH^{n-1}=\int_{J_{u}}g([u],\nu_u)d\calH^{n-1}
\end{equation}
for all functions $\Psi\in C^0(\R^{m\times n})$ satisfying \eqref{e:Psi growth}, 
and $g\in C^0(\R^m\times S^{n-1};[0,\infty))$ satisfying \Hguno, \Hgdue, and \Hgquattro. In particular,
\[
\lim_jE_{\Psi,g}[u_j,\Omega]=E_{\Psi,g}[u,\Omega]\,.
\]
\end{corollary}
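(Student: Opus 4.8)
The plan is to repeat the proof of Corollary~\ref{theodensity Psi g} almost verbatim, with two modifications: the pointwise bound \eqref{e:g0} is replaced by \eqref{eqgbetag0}, and the extra hypothesis $\calH^{n-1}(J_u)<\infty$ is exploited through item~\ref{theodensityintroHn1} of Theorem~\ref{theodensityintro}. The bulk statement \eqref{e:Psi convMS} is literally identical to \eqref{e:Psi conv}: it uses only items~\ref{theodensityintroL1} and~\ref{theodensityintroLp}, Egorov's theorem and the growth bound \eqref{e:Psi growth}, none of which involves the surface density, so no change is needed.

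For \eqref{e:g convMS} I would first change variables by the area formula \eqref{eqareaguj}, noting that the tangential Jacobian ${\bf J}_{n-1}d^{\Phi_j^{-1}(J_{u_j})}\Phi_j$ differs from $1$ by $O(\|\Dnabla\Phi_j-\Id\|_{L^\infty(\R^n)})=o(1)$ thanks to item~\ref{theodensityintrobilip}. The step where \Hgquattro\ forces an adjustment is the a priori bound on $\int_{\Phi_j^{-1}(J_{u_j})}g([u_j]\circ\Phi_j,\nu_{u_j}\circ\Phi_j)\,d\calH^{n-1}$: by \eqref{eqgbetag0} this is controlled by $\beta\,\calH^{n-1}(\Phi_j^{-1}(J_{u_j}))+C\int_{\Phi_j^{-1}(J_{u_j})}g_0(|[u_j]\circ\Phi_j|)\,d\calH^{n-1}$, where the first term is bounded because $\calH^{n-1}(\Phi_j^{-1}(J_{u_j}))\to\calH^{n-1}(J_u)<\infty$ by item~\ref{theodensityintroHn1}, and the second is bounded by \eqref{eqPhihujg0finite}. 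Hence the Jacobian error is $o(1)$ exactly as in \eqref{e:salto circle Phij}, and it only remains to compare $\int_{\Phi_j^{-1}(J_{u_j})}g([u_j]\circ\Phi_j,\nu_{u_j}\circ\Phi_j)\,d\calH^{n-1}$ with $\int_{J_u}g([u],\nu_u)\,d\calH^{n-1}$.

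I would split both integration domains into $J_u\cap\Phi_j^{-1}(J_{u_j})$, $\Phi_j^{-1}(J_{u_j})\setminus J_u$ and $J_u\setminus\Phi_j^{-1}(J_{u_j})$. On the common part I replace $[u_j]\circ\Phi_j$ by $[u]$ using \eqref{e:hp g3 bis} and \eqref{theodensityintrog1}, and then $\nu_{u_j}\circ\Phi_j$ by $\nu_u$: from \eqref{theodensityintrog2}, the monotonicity of $g_0$ with $g_0^{-1}(0)=\{0\}$, and $|[u]|>0$ $\calH^{n-1}$-a.e.\ on $J_u$, one gets $\nu_{u_j}\circ\Phi_j\to\nu_u$ $\calH^{n-1}$-a.e.\ on $J_u$ along a subsequence, and dominated convergence applies with dominating function $\beta+C g_0(|[u]|)\in L^1(\calH^{n-1}\res J_u)$ furnished by \eqref{eqgbetag0}. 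On $\Phi_j^{-1}(J_{u_j})\setminus J_u$ we have $[u]=0$, hence $g_0(|[u_j]\circ\Phi_j|)=g_0(|[u]-[u_j]\circ\Phi_j|)$ there, so $\int_{\Phi_j^{-1}(J_{u_j})\setminus J_u}g([u_j]\circ\Phi_j,\nu_{u_j}\circ\Phi_j)\,d\calH^{n-1}\le\beta\,\calH^{n-1}(\Phi_j^{-1}(J_{u_j})\setminus J_u)+C\int_{J_u\cup\Phi_j^{-1}(J_{u_j})}g_0(|[u]-[u_j]\circ\Phi_j|)\,d\calH^{n-1}\to0$ by item~\ref{theodensityintroHn1} and \eqref{theodensityintrog1}. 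On $J_u\setminus\Phi_j^{-1}(J_{u_j})$ we estimate $\int g([u],\nu_u)\,d\calH^{n-1}\le\beta\,\calH^{n-1}(J_u\setminus\Phi_j^{-1}(J_{u_j}))+C\int_{J_u\setminus\Phi_j^{-1}(J_{u_j})}g_0(|[u]|)\,d\calH^{n-1}\to0$, the first term by item~\ref{theodensityintroHn1} and the second by absolute continuity of the integral, since $g_0(|[u]|)\in L^1(\calH^{n-1}\res J_u)$ and the measure of the domain tends to $0$. Collecting the three contributions gives \eqref{e:g convMS}; since every limit is independent of the extracted subsequence, the full sequence converges, and \eqref{e:Psi convMS} together with \eqref{e:g convMS} yields the convergence of $E_{\Psi,g}$.

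The main obstacle, compared with Corollary~\ref{theodensity Psi g}, is the loss of the pointwise bound $g(s,\nu)\le C g_0(|s|)$ valid under \Hgtre: once $g(0,\cdot)$ is only squeezed between $\alpha$ and $\beta$, each estimate acquires an additive term proportional to the $\calH^{n-1}$-measure of the region where the jump sets of $u$ and of $u_j\circ\Phi_j^{-1}$ fail to coincide, so the corollary rests on showing this measure is $o(1)$ — which is precisely item~\ref{theodensityintroHn1} and, in general, fails without $\calH^{n-1}(J_u)<\infty$.
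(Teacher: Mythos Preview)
Your proposal is correct and follows essentially the same route as the paper: the bulk part is literally unchanged, and for the surface term you replace the bound \eqref{e:g0} by \eqref{eqgbetag0} and absorb the extra $\beta\,\calH^{n-1}$ terms via item~\ref{theodensityintroHn1}, exactly as the paper does in \eqref{eqPhihujg0finite2}--\eqref{eqchangenormal-2}. The only cosmetic difference is that you spell out the three-way split of the integration domain more explicitly, whereas the paper refers back to the chain of inequalities at the end of Corollary~\ref{theodensity Psi g}.
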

\begin{proof}
 The proof is very similar to the one of Corollary \ref{theodensity Psi g}. The first part, until \eqref{eqPhihujg0finite}, is identical.
Using 
\eqref{eqgbetag0}, 
{$\calH^{n-1}(J_u)<\infty$,}
\ref{theodensityintroHn1} in Theorem \ref{theodensityintro}, and \eqref{eqPhihujg0finite} we have
 \begin{equation}\label{eqPhihujg0finite2}
\begin{split}
&\int_{\Phi_j^{-1}(J_{u_j})}g([u_j]\circ\Phi_j,\nu_{u_j}\circ\Phi_j)d\calH^{n-1}\\
&  \le \beta (\calH^{n-1}(J_u)+
\calH^{n-1}(\Phi_j^{-1}(J_{u_j})\setminus J_u))+
 {C} \int_{\Phi_j^{-1}(J_{u_j})} g_0(|[u_j]\circ\Phi_j|)d\calH^{n-1}\\
&  \le C<\infty
  \end{split}
\end{equation} 
 for all $j$.
We use the latter {and \eqref{eqareaguj}}  to conclude that
\begin{align}\label{e:salto circle Phij2}
& \left|\int_{J_{u_j}} g([u_j],\nu_{u_j})d\calH^{n-1}-
 \int_{\Phi_j^{-1}(J_{u_j})}g([u_j]\circ\Phi_j,\nu_{u_j}\circ\Phi_j)d\calH^{n-1}\right|
 \notag\\
 &\leq  {\|1-{{\bf J}_{n-1}d^{\Phi_j^{-1}(J_{u_j})}\Phi_j}\|_{L^\infty(\Phi_j^{-1}(J_{u_j})                           ;\calH^{n-1})}}
 \int_{\Phi_j^{-1}(J_{u_j})}g([u_j]\circ\Phi_j,\nu_{u_j}\circ\Phi_j)d\calH^{n-1} \nonumber\\
 &=o(1)
\end{align}
which replaces \eqref{e:salto circle Phij}.
 Using \ref{theodensityintroHn1} in Theorem \ref{theodensityintro},
$\chi_{\Phi_j^{-1}(J_{u_j})}\to1$ pointwise 
$\calH^{n-1}$-almost everywhere on $J_u$.
As above, $\chi_{\Phi_j^{-1}(J_{u_j})}\nu_{u_j}\circ\Phi_j\to\nu_u$ $\calH^{n-1}$-almost everywhere on $J_u$.
From $\calH^{n-1}(J_u)<\infty$ {and integrability of $g_0(|[u]|)$} we obtain that
$\beta + g_0(|[u]|)\in L^1(\Omega;\calH^{n-1}\LL J_u)$. 
Dominated convergence, which we can use by 
\eqref{eqgbetag0}, then yields
\begin{equation}\label{eqchangenormal3}
 \limsup_j\int_{J_{u}{\cap \Phi_j^{-1}(J_{u_j})}} \left| g([u],\nu_{u_j}\circ\Phi_j)-g([u],\nu_u)\right|d\calH^{n-1}=0.
\end{equation}
Moreover, items \ref{theodensityintrog0} and \ref{theodensityintroHn1} in
Theorem~\ref{theodensityintro} and \eqref{eqgbetag0},  yield that
\begin{equation}\label{eqchangenormal-2}
 \limsup_j\left(\int_{J_u\setminus \Phi_j^{-1}(J_{u_j})} g([u],\nu_u)d\calH^{n-1}
+\int_{\Phi_j^{-1}(J_{u_j})\setminus J_{u}} g([u_j]\circ\Phi_j,\nu_{u_j}\circ\Phi_j)d\calH^{n-1}\right)=0.
\end{equation}
The rest of the proof is unchanged.
\end{proof}

We can actually strengthen the conclusions of Corollary~\ref{theodensity Psi g}
{and Corollary~\ref{theodensityMS}}
by constructing an approximating sequence converging in a stronger sense.
\begin{corollary}\label{theodensity Psi g area}
In Corollary~\ref{theodensity Psi g}
and Corollary~\ref{theodensityMS} 
the sequence $(u_j)_{j\in\N}$
can be chosen to additionally satisfy
 \begin{equation}\label{eqconvpushf}
{
\lim_j | (\Phi_j)_\#Du_j-Du|(\Omega)=0}
 \end{equation}
{and
\begin{equation}\label{eqconvcomposit}
\lim_j \| u_j\circ \Phi_j- u\|_{BV(\Omega)}=0\,.
\end{equation}
In particular,}
\begin{equation}\label{eq:convgrad}
 \lim_j\int_\Omega|\nabla u_j|dx=\int_\Omega|\nabla u|dx\,,
 \end{equation}
 \begin{equation}\label{eq:convarea}
\lim_j\int_\Omega\sqrt{1+|\nabla u_j|^2}dx=\int_\Omega\sqrt{1+|\nabla u|^2}dx\,,
 \end{equation}
 \begin{equation}\label{eq:convsalti}
\lim_j\int_{J_{u_j}}|[u_j]|d\calH^{n-1}=\int_{J_{u}}|[u]|d\calH^{n-1}\,,         
 \end{equation}
 {so that} $(u_j)_{j\in\N}$ converges to $u$ {strictly} in $BV(\Omega;\R^m)$ and in area.
\end{corollary}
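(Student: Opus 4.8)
The argument below uses only the conclusions of Theorem~\ref{theodensityintro}, so it applies verbatim in the settings of both Corollary~\ref{theodensity Psi g} and Corollary~\ref{theodensityMS}. The plan is to leave the construction underlying Theorem~\ref{theodensityintro} untouched and merely to apply it with the enlarged modulus $\tilde g_0(t):=g_0(t)+t$. This $\tilde g_0$ is again continuous, nondecreasing and subadditive, with $\tilde g_0^{-1}(0)=\{0\}$, and $\tilde g_0(|[u]|)=g_0(|[u]|)+|[u]|\in L^1(\Omega;\calH^{n-1}\LL J_u)$, since $g_0(|[u]|)\in L^1$ by hypothesis and $\int_{J_u}|[u]|\,d\calH^{n-1}=|D^su|(\Omega)\le|Du|(\Omega)<\infty$ because $u\in SBV\subseteq BV$. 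Applying Theorem~\ref{theodensityintro} with $\tilde g_0$ in place of $g_0$ produces $(u_j,\Phi_j)$ which, since $\tilde g_0\ge g_0$, still satisfies every conclusion of Theorem~\ref{theodensityintro} relative to $g_0$ --- hence also those of Corollaries~\ref{theodensity Psi g} and~\ref{theodensityMS} --- and, inserting $\tilde g_0(t)\ge t$ into \eqref{theodensityintrog1}--\eqref{theodensityintrog2}, additionally the \emph{unweighted} estimates
\begin{equation}\label{eqAprime}
\lim_j\int_{J_u\cup\Phi_j^{-1}(J_{u_j})}\bigl|[u]-[u_j]\circ\Phi_j\bigr|\,d\calH^{n-1}=0,
\end{equation}
\begin{equation}\label{eqBprime}
\lim_j\int_{J_u\cup\Phi_j^{-1}(J_{u_j})}\bigl(|[u]|+|[u_j]\circ\Phi_j|\bigr)\,\bigl|\nu_u-\nu_j\circ\Phi_j\bigr|\,d\calH^{n-1}=0 ;
\end{equation}
in particular, by \eqref{eqAprime} and $\int_{J_u}|[u]|\,d\calH^{n-1}<\infty$, one gets the uniform bound $\sup_j\int_{\Phi_j^{-1}(J_{u_j})}|[u_j]\circ\Phi_j|\,d\calH^{n-1}=:M<\infty$.

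I would next deduce \eqref{eq:convgrad}--\eqref{eq:convsalti}. Since $\Omega$ is bounded, item~\ref{theodensityintroLp} gives $\nabla u_j\to\nabla u$ in $L^1(\Omega;\R^{m\times n})$, which is \eqref{eq:convgrad}, and \eqref{eq:convarea} follows because $\xi\mapsto\sqrt{1+|\xi|^2}$ is $1$-Lipschitz. For \eqref{eq:convsalti}, the pointwise inequality $\bigl||a|-|b|\bigr|\le|a-b|$ and \eqref{eqAprime} yield $\int_{\Phi_j^{-1}(J_{u_j})}|[u_j]\circ\Phi_j|\,d\calH^{n-1}\to\int_{J_u}|[u]|\,d\calH^{n-1}$, and then the Area formula \eqref{eqareaguj} applied to the integrand $|[u_j]|$, together with $\|1-{\bf J}_{n-1}d^{\Phi_j^{-1}(J_{u_j})}\Phi_j\|_{L^\infty}\le C\|\Dnabla\Phi_j-\Id\|_{L^\infty}\to0$ (as established in the proof of Corollary~\ref{theodensity Psi g}) and the bound $M$, gives $\int_{J_{u_j}}|[u_j]|\,d\calH^{n-1}\to\int_{J_u}|[u]|\,d\calH^{n-1}$. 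Recalling that for $w\in SBV(\Omega;\R^m)$ one has $|Dw|(\Omega)=\int_\Omega|\nabla w|\,dx+\int_{J_w}|[w]|\,d\calH^{n-1}$ and $|D^sw|(\Omega)=\int_{J_w}|[w]|\,d\calH^{n-1}$, the limits \eqref{eq:convgrad}, \eqref{eq:convarea}, \eqref{eq:convsalti} together with item~\ref{theodensityintroL1} give at once the $BV$-strict and area-strict convergence of $(u_j)$ to $u$.

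It remains to prove \eqref{eqconvcomposit} and \eqref{eqconvpushf}. Put $v_j:=u_j\circ\Phi_j$. By item~\ref{theodensityintrobilip}, $\Phi_j$ is bilipschitz, $\Phi_j(\Omega)=\Omega$, the Jacobians of $\Phi_j$ and of $\Phi_j^{-1}$ are uniformly bounded, and $\Phi_j\to\Id$, $\Dnabla\Phi_j\to\Id$ uniformly; a change of variables (comparing $u_j\circ\Phi_j$ with $u\circ\Phi_j$ and using item~\ref{theodensityintroL1}) together with continuity of composition on $L^1$ gives $v_j\to u$ in $L^1(\Omega;\R^m)$. By the chain rule for $BV$ functions composed with bilipschitz maps, $\nabla v_j=(\nabla u_j\circ\Phi_j)\,\Dnabla\Phi_j\to\nabla u$ in $L^1$, $J_{v_j}=\Phi_j^{-1}(J_{u_j})$ up to $\calH^{n-1}$-null sets, $[v_j]=[u_j]\circ\Phi_j$, and, after a suitable choice of orientation, $|\nu_{v_j}-\nu_j\circ\Phi_j|\le C\|\Dnabla\Phi_j-\Id\|_{L^\infty}=:\eta_j\to0$. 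Hence $|Dv_j-Du|(\Omega)=\int_\Omega|\nabla v_j-\nabla u|\,dx+\int_{J_u\cup\Phi_j^{-1}(J_{u_j})}\bigl|[v_j]\otimes\nu_{v_j}-[u]\otimes\nu_u\bigr|\,d\calH^{n-1}$ (both singular parts being carried by $\calH^{n-1}\LL(J_u\cup\Phi_j^{-1}(J_{u_j}))$, with the conventions $[v_j]=0$ off $J_{v_j}$ and $[u]=0$ off $J_u$), and decomposing
\[
[v_j]\otimes\nu_{v_j}-[u]\otimes\nu_u=[v_j]\otimes(\nu_{v_j}-\nu_j\circ\Phi_j)+([v_j]-[u])\otimes(\nu_j\circ\Phi_j)+[u]\otimes(\nu_j\circ\Phi_j-\nu_u)
\]
the three resulting integrals are bounded respectively by $\eta_j M$, by the left-hand side of \eqref{eqAprime}, and by the left-hand side of \eqref{eqBprime}, so they tend to $0$; with $v_j\to u$ in $L^1$ this is \eqref{eqconvcomposit}. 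The proof of \eqref{eqconvpushf} is entirely analogous: one repeats the Area-formula computation of the proof of Corollary~\ref{theodensity Psi g} with the vector density $[u_j]\otimes\nu_{u_j}$ in place of the scalar $g([u_j],\nu_{u_j})$, retaining the extra factors $\cof\Dnabla\Phi_j$ and (in the arguments of the test fields) $\Phi_j$, all of which tend to $\Id$ by item~\ref{theodensityintrobilip}, so that $|(\Phi_j)_\#Du_j-Du|(\Omega)$ is bounded by the same three quantities plus a term of order $\eta_j+\|\Phi_j-\Id\|_{L^\infty}$ times uniformly bounded masses.

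The step I expect to require genuine care is the total-variation comparison of the singular vector measures: one must combine the jump-amplitude control \eqref{eqAprime} with the normal control \eqref{eqBprime}, keep precise track of the distinct but asymptotically coincident carriers $J_u$, $J_{u_j}$, $\Phi_j^{-1}(J_{u_j})$ and, for \eqref{eqconvpushf}, $\Phi_j(J_{u_j})$, and handle the tangential Jacobian, the cofactor $\cof\Dnabla\Phi_j$, and the double composition with $\Phi_j$. None of this requires revisiting the construction of $(u_j,\Phi_j)$: the only genuinely new ingredient is the elementary remark that $g_0$ may be replaced by $g_0+t$ without affecting any hypothesis, which upgrades \eqref{theodensityintrog1}--\eqref{theodensityintrog2} to the unweighted estimates \eqref{eqAprime}--\eqref{eqBprime}.
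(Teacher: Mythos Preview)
Your proposal is correct and follows the same key idea as the paper: replace $g_0$ by $\tilde g_0(t):=g_0(t)+t$ and apply Theorem~\ref{theodensityintro} to that, obtaining the unweighted jump estimates \eqref{eqAprime}--\eqref{eqBprime} from which everything else follows. The paper deduces \eqref{eq:convgrad}--\eqref{eq:convsalti} by invoking Corollary~\ref{theodensity Psi g} with $\Psi(\xi)=|\xi|$, $\Psi(\xi)=\sqrt{1+|\xi|^2}$, $\tilde g(s)=|s|$, whereas you argue more directly from $L^p\hookrightarrow L^1$ and the Lipschitz property; and the paper derives \eqref{eqconvcomposit} by first establishing \eqref{eqconvpushf} and then reducing to the gradient part via the chain rule on simplexes, while you carry out the singular-part decomposition explicitly---but these are minor variations on the same argument.
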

\begin{proof}
The proof is based on the fact that the construction of the sequence in Theorem \ref{theodensityintro} does not depend on the details of the energy considered.
We define the auxiliary functions
$\widetilde{g}_0:{[0,\infty)}\to[0,\infty)$, $\widetilde{g}:\R^m\to[0,\infty)$, by
\[
 \widetilde{g}_0(t):=g_0(t)+t\,,\hskip5mm {t\in[0,\infty)\,,}
\]
and
\[
 \widetilde{g}(s):=|s|\,,\hskip5mm  {s\in\R^m\,.}
\]
It is easy to check that $\widetilde{g}_0$ satisfies \Hgzerouno-\Hgzerodue, and moreover that both
$g$ and $\widetilde{g}$ satisfy \Hguno-{\Hgdue} with respect to $\widetilde{g}_0$. 
Further, $\widetilde g$ satisfies \Hgtre.
In addition, if $u\in SBV(\Omega;\R^m)$, having assumed that $g_0(|[u]|)\in L^1(\Omega;\calH^{n-1}\res J_u)$,
we infer that $\widetilde{g}_0(|[u]|)\in L^1(\Omega;\calH^{n-1}\res J_u)$.
Therefore, we may consider the sequence $(u_j)_{j\in\N}$ provided by Theorem~\ref{theodensityintro} with surface density $\widetilde{g}_0$.
Thus, to {get \eqref{eq:convgrad}--\eqref{eq:convsalti}} it is sufficient to apply Corollary~\ref{theodensity Psi g} {with}
{$\Psi_1(\xi):=|\xi|$} and $\widetilde{g}$, and then  {$\Psi_2(\xi):=\sqrt{1+|\xi|^2}$} and {$\widetilde{g}$.}
{One applies either
Corollary~\ref{theodensity Psi g} 
or Corollary~\ref{theodensityMS} 
with densities $\Psi$ and $g$ to obtain convergence of the energy.
From  Theorem~\ref{theodensityintro}\ref{theodensityintrog0} for $\tilde g_0$ we obtain
 \[{
\lim_j\int_{J_u\cup \Phi_j^{-1}(J_{u_j})}|[u_j]\circ\Phi_j-[u]|d\calH^{n-1}=0\,,   }        
 \]
and with \ref{theodensityintroLp} and \ref{theodensityintrobilip}
we conclude $ | (\Phi_j)_\#Du_j-Du|(\Omega)\to0$.}
{It remains to prove \eqref{eqconvcomposit}. Recalling Theorem \ref{theodensityintro} \ref{theodensityintroL1} and \ref{theodensityintrobilip} and \eqref{eqconvpushf}, it is enough to check that
\[\lim_j\int_\Omega|\nabla(u_j\circ\Phi_j)-\nabla u|dx=0.\]
Let ${\mathcal S_j}$ be the decomposition of Theorem \ref{theodensityintro} \ref{theodensityintrosimpl}, then $u_j$ is affine in $T$, for all $T\in {\mathcal S_j}$,
and the chain rule gives
\begin{eqnarray*}&\displaystyle\int_\Omega|\nabla(u_j\circ\Phi_j)-\nabla u|dx=\sum_{T\in{\mathcal S_j}}
\int_{\Omega\cap\Phi_j^{-1}(T)}|\nabla(u_j\circ\Phi_j)-\nabla u|dx\\
&=\displaystyle\sum_{T\in{\mathcal S_j}}
\int_{\Omega\cap\Phi_j^{-1}(T)}|(\nabla u_j\circ\Phi_j)\Dnabla\Phi_j-\nabla u|dx.\end{eqnarray*}
The triangular inequality yields
\begin{multline*}\int_{\Omega\cap\Phi_j^{-1}(T)}|(\nabla u_j\circ\Phi_j)\Dnabla\Phi_j-\nabla u|dx
\leq\int_{\Omega\cap\Phi_j^{-1}(T)}|\nabla u_j\circ\Phi_j||\Dnabla\Phi_j-\Id|dx\\+
\int_{\Omega\cap\Phi_j^{-1}(T)}|(\nabla u_j-\nabla u)\circ\Phi_j|dx+
\int_{\Omega\cap\Phi_j^{-1}(T)}|\nabla u\circ\Phi_j-\nabla u|dx
\end{multline*}
and all terms tend to zero by Theorem \ref{theodensityintro} \ref{theodensityintrobilip} and \eqref{eqconvpushf}. This gives the conclusion.}
\end{proof}

{Finally, we extend the above approximation results to functions belonging to $(GSBV(\Omega))^m$ 
with energy $E_{|\cdot|^p,g_0}$ finite (we refer to \cite[Section 4.5]{AFP} for the basic notation and theory).
\begin{corollary}\label{theodensity GSBV}
 Let $\Omega\subseteq\R^n$ be an open bounded Lipschitz set, $u\in L^1(\Omega;\R^m)\cap(GSBV(\Omega))^m$ be such that 
 $\nabla u\in L^p(\Omega;\R^{m\times n})$ for some $p\in[1,\infty)$, and $g_0(|[u]|)\in L^1(\Omega;\calH^{n-1}\res J_u)$, with $g_0:[0,\infty)\to[0,\infty)$ continuous, nondecreasing, subadditive, and $g_0^{-1}(0)=\{0\}$.

Then, there exists a sequence $(u_j)_{j\in\N}\subseteq SBV\cap L^\infty(\Omega;\R^m)$ 
such that all the conclusions in Theorem~\ref{theodensityintro} hold, and in addition
\begin{equation}\label{e:Psi conv GSBV}
\lim_j\int_\Omega\Psi(\nabla u_j)dx=\int_\Omega\Psi(\nabla u)dx\,,
\end{equation}
\begin{equation} \label{e:g conv GSBV}
\lim_j\int_{J_{u_j}}g([u_j],\nu_{u_j})d\calH^{n-1}=\int_{J_{u}}g([u],\nu_u)d\calH^{n-1}
\end{equation}
for all functions $\Psi\in C^0(\R^{m\times n})$ satisfying \eqref{e:Psi growth}, 
and all $g\in C^0(\R^m\times S^{n-1};[0,\infty))$ satisfying \Hguno, \Hgdue, and {\Hgtre}. {In addition, if $\calH^{n-1}(J_u)<\infty$, \eqref{e:g conv GSBV} holds for all $g\in C^0(\R^m\times S^{n-1};[0,\infty))$ satisfying \Hguno, \Hgdue, and \Hgquattro.}

Moreover, the sequence $(u_j)_{j\in\N}$ can be chosen such that \eqref{eq:convgrad}
and \eqref{eq:convarea} hold.
\end{corollary}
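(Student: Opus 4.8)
The plan is to reduce the statement to the $SBV$ setting of Theorem~\ref{theodensityintro} and Corollaries~\ref{theodensity Psi g}--\ref{theodensityMS} by truncation and a diagonal argument. For $k\in\N$ let $u^k$ be the componentwise truncation $u^k_i:=(-k)\vee(u_i\wedge k)$. By the definition of $(GSBV(\Omega))^m$, and since $u\in L^1(\Omega;\R^m)$ and $\nabla u\in L^p(\Omega;\R^{m\times n})\subseteq L^1(\Omega;\R^{m\times n})$, each component $u^k_i$ lies in $SBV_{\loc}(\Omega)\cap L^\infty(\Omega)$, with $\nabla u^k_i=\chi_{\{|u_i|<k\}}\nabla u_i$ and $J_{u^k_i}\subseteq J_{u_i}$. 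The one point needing care is that $u^k$ has finite total variation. For this we use that a nondecreasing, subadditive $g_0$ with $g_0^{-1}(0)=\{0\}$ satisfies $g_0(t)\ge\tfrac12 g_0(1)\,t$ for $t\in(0,1]$ (from $g_0(1)\le g_0(\lceil1/t\rceil t)\le\lceil1/t\rceil g_0(t)$ and monotonicity), while $g_0(|[u]|)\in L^1(\calH^{n-1}\res J_u)$ together with $g_0^{-1}(0)=\{0\}$ gives $\calH^{n-1}(J_u\cap\{|[u]|\ge\eps\})<\infty$ for every $\eps>0$; together these yield $\min(|[u]|,M)\in L^1(\calH^{n-1}\res J_u)$ for all $M>0$. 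Since $|[u^k]|\le\min(|[u]|,2\sqrt m\,k)$, we conclude $u^k\in SBV(\Omega;\R^m)\cap L^\infty(\Omega;\R^m)$; moreover $\nabla u^k\in L^p$ and $g_0(|[u^k]|)\le g_0(|[u]|)\in L^1(\calH^{n-1}\res J_{u^k})$, so the hypotheses of Theorem~\ref{theodensityintro} hold for $u^k$. Note also that $\nabla u=0$ a.e.\ forces $\nabla u^k=0$ a.e., and $u\in W^{1,p}(\Omega;\R^m)$ forces $u^k\in W^{1,p}(\Omega;\R^m)$.

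Next I record the behaviour of the truncations as $k\to\infty$: $u^k\to u$ in $L^1$; $\nabla u^k\to\nabla u$ in $L^p$ (since $|\nabla u^k|\le|\nabla u|$ and $\nabla u^k\to\nabla u$ a.e.); $J_{u^k}\nearrow J_u$ and $[u^k]\to[u]$ $\calH^{n-1}$-a.e.\ on $J_u$ with $\nu_{u^k}=\nu_u$ (the scalar truncations being nondecreasing); by dominated convergence, using $g_0(|[u]|)\in L^1$ and the elementary bound $|[u]-[u^k]|\le|[u]|$, both $\int_{J_u}g_0(|[u]-[u^k]|)\,d\calH^{n-1}\to0$ and $\int_{J_u\setminus J_{u^k}}g_0(|[u]|)\,d\calH^{n-1}\to0$; and $\calH^{n-1}(J_{u^k})\to\calH^{n-1}(J_u)$, so $\calH^{n-1}(J_u\setminus J_{u^k})\to0$ whenever $\calH^{n-1}(J_u)<\infty$. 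For each $k$ let $(u^k_j)_j$, $(\Phi^k_j)_j$ be the sequences produced by Theorem~\ref{theodensityintro} applied to $u^k$. A diagonal choice of $j(k)$ gives $v_k:=u^k_{j(k)}$ and $\Phi_k:=\Phi^k_{j(k)}$ satisfying all the conclusions of Theorem~\ref{theodensityintro} relative to $u$: item \ref{theodensityintrosimpl}, the bilipschitz and identity-outside-$\Omega$ content of \ref{theodensityintrobilip}, and \ref{theodensityintrosbv0} are inherited directly; items \ref{theodensityintroL1}, \ref{theodensityintroLp}, the $L^\infty$-smallness in \ref{theodensityintrobilip}, and, when $\calH^{n-1}(J_u)<\infty$, item \ref{theodensityintroHn1}, follow from triangle-inequality splittings such as $\|\nabla v_k-\nabla u\|_{L^p}\le\|\nabla u^k_{j(k)}-\nabla u^k\|_{L^p}+\|\nabla u^k-\nabla u\|_{L^p}$ and $\calH^{n-1}(J_u\triangle\Phi_k^{-1}(J_{v_k}))\le\calH^{n-1}(J_u\setminus J_{u^k})+\calH^{n-1}(J_{u^k}\triangle\Phi_k^{-1}(J_{v_k}))$, choosing $j(k)$ so that the $u^k$-contributions are below $1/k$.

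The only slightly delicate point is item \ref{theodensityintrog0} for $(v_k,\Phi_k)$. For \eqref{theodensityintrog1}, subadditivity of $g_0$ and $J_{u^k}\subseteq J_u$ yield the pointwise bound
\[
g_0\bigl(|[u]-[v_k]\circ\Phi_k|\bigr)\chi_{J_u\cup\Phi_k^{-1}(J_{v_k})}\le g_0\bigl(|[u]-[u^k]|\bigr)\chi_{J_u}+g_0\bigl(|[u^k]-[v_k]\circ\Phi_k|\bigr)\chi_{J_{u^k}\cup\Phi_k^{-1}(J_{v_k})},
\]
so the left-hand integral is at most $\int_{J_u}g_0(|[u]-[u^k]|)\,d\calH^{n-1}$ plus the $u^k$-version of \eqref{theodensityintrog1}; the first term tends to $0$ with $k$, the second is made $<1/k$ by the choice of $j(k)$. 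For \eqref{theodensityintrog2} one uses $|[u]|\le|[u]-[u^k]|+|[u^k]|$, subadditivity of $g_0$, $\nu_{u^k}=\nu_u$ on $J_{u^k}$, and $|\nu_u-\nu_{v_k}\circ\Phi_k|\le2$: after the usual splitting of the domain and the convention of \eqref{theodensityintrog2}, the integrand is dominated by a sum of terms each controlled by $\int_{J_u}g_0(|[u]-[u^k]|)\,d\calH^{n-1}$, by $\int_{J_u\setminus J_{u^k}}g_0(|[u]|)\,d\calH^{n-1}$, or by the $u^k$-versions of \eqref{theodensityintrog1}--\eqref{theodensityintrog2}, all of which vanish in the limit along the chosen $j(k)$. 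Once $(v_k,\Phi_k)$ is known to satisfy all conclusions of Theorem~\ref{theodensityintro} relative to $u$, the proofs of Corollary~\ref{theodensity Psi g} and Corollary~\ref{theodensityMS} go through unchanged: they invoke only those conclusions, the integrability $g_0(|[u]|)\in L^1(\calH^{n-1}\res J_u)$ (and, in the second case, $\calH^{n-1}(J_u)<\infty$), and the structural assumptions on $\Psi$, $g$, $g_0$ — none of which requires $u\in BV$. This gives \eqref{e:Psi conv GSBV} and \eqref{e:g conv GSBV} in the two stated cases for the single sequence $(v_k)$. Finally \eqref{eq:convgrad} and \eqref{eq:convarea} are the particular cases $\Psi(\xi)=|\xi|$ and $\Psi(\xi)=\sqrt{1+|\xi|^2}$ of \eqref{e:Psi conv GSBV}, both admissible since $|\xi|\le|\xi|^p+1$ and $\sqrt{1+|\xi|^2}\le|\xi|^p+2$ for $p\ge1$. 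I expect the bookkeeping in the diagonal step — transferring \eqref{theodensityintrog1}, \eqref{theodensityintrog2} and \eqref{theodensityintroHn1} from $u^k$ to $u$ — to be the main obstacle, and the truncation estimates assembled above are precisely what makes it work.
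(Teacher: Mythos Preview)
Your proof is correct and follows essentially the same approach as the paper: truncate $u$ to reduce to the $SBV$ case, apply Theorem~\ref{theodensityintro}, and diagonalize. The paper uses smooth truncations $\mathcal T_k\in C^1_c(\R^m;\R^m)$ with $\mathcal T_k(z)=z$ on a large ball and $\|D\mathcal T_k\|_\infty\le1$, whereas you use componentwise piecewise-linear truncation; both work, and your explicit verification that $u^k\in SBV$ via the bound $g_0(t)\ge\tfrac12 g_0(1)\,t$ for $t\in(0,1]$ fills in a point the paper leaves to references. You also spell out the diagonal bookkeeping for items \ref{theodensityintrog0} and \ref{theodensityintroHn1} more explicitly than the paper, which simply invokes ``a diagonalization argument''.
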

\begin{proof}
We argue by density by constructing a sequence $(\tilde{u}_k)_{k\in\N}\subset SBV\cap L^\infty(\Omega;\R^m)$ converging in $L^1(\Omega;\R^m)$ and in energy to $u$.
This is well-known nowadays, in any case for the readers' convenience we recall the definition.
To this aim we fix a sequence $(a_k)_{k\in\N}\subset(0,\infty)$ such that $a_k<a_{k+1}$, $a_k\uparrow\infty$, and such that there are functions $\mathcal{T}_k\in {C^1_c}(\R^n;\R^m)$ satisfying $\mathcal{T}_k(z){=z}$ if $|z|\leq a_k$, $\mathcal{T}_k(z)=0$ if $ |z|\geq a_{k+1}$,
and $\|\Dnabla \mathcal{T}_k\|_{L^\infty(\R^n;\R^m)}\leq 1$.
Then, the sequence $\tilde{u}_k:=\mathcal{T}_k(u){\in SBV(\Omega;\R^m)}$ converges to $u$ in $L^1(\Omega;\R^m)$,
$\nabla \tilde{u}_k=\nabla u$ $\calL^n$-almost everywhere on $\Omega_k:=\{x\in\Omega:\,|u(x)|\leq a_k\}$, $J_{\tilde{u}_k}\subseteq J_u$, $\nu_{\tilde{u}_k}=\nu_u$ $\calH^{n-1}$-almost everywhere on $J_{\tilde{u}_k}$. Moreover, as
\[
\calH^{n-1}(\{x\in J_u:\,|u^\pm(x)|=\infty\})=0
\]
(see \cite[Proposition 2.12, Remark 2.13]{AlFoc}) we infer that 
$\chi_{J_{\tilde{u}_k}}\to \chi_{J_u}$, 
$\tilde{u}^\pm_k(x)=u^\pm(x)$  $\calH^{n-1}$-almost everywhere on $J_{\tilde{u}_k}\cap\Omega_k$,  $|[\tilde{u}_k]|\le |[u]|$ and $\tilde{u}_k^\pm\to u^\pm$
$\calH^{n-1}$-almost everywhere on $J_u$. Therefore, we get
\[
\lim_k\int_\Omega|\nabla \tilde{u}_k|^pdx=\int_\Omega|\nabla u|^pdx\,,
\]
and thanks to the subadditivity and monotonicity properties of $g_0$ also that
\[
\lim_k\int_{J_u}g_0(|[u]-[\tilde{u}_k]|)d\calH^{n-1}=0
\]
(for detailed proofs of similar properties see, for instance, \cite[Lemma 6.1]{AlFoc} and \cite[Proposition 4.8]{ContiFocardiIurlano2022}).

Next, for every $k\in\N$ we apply Theorem~\ref{theodensityintro} in order to get a sequence $(\tilde{u}_{k,j})_{j\in\N}$ approximating $\tilde{u}_k$ and satisfying all
the conditions in that statement. Eventually, we conclude thanks to a diagonalization argument in view of the properties of $g_0$, $\Psi$ and $g$ and the arguments in {Corollaries~\ref{theodensity Psi g} or \ref{theodensityMS}, and in Corollary~\ref{theodensity Psi g area}}.
\end{proof}
}

\section{Technical results}\label{s:techresults}
{In this {section} we collect the key technical tools we use to prove Theorem  \ref{theodensityintro}.
For $SBV$ functions having finite energy according to Theorem  \ref{theodensityintro}, we establish 
first an extension result, and then some measure theoretic properties crucial for our constructions.}
\subsection{Extension}\label{a:extension}
\newcommand\lipschitzg{{G}}
In this section we prove an extension result for $SBV$ functions. A standard local reflection argument would work for  
 $SBV\cap L^p$ functions. However, 
 {in the present setting it is not clear that having finite energy implies finiteness of the $L^p$ norm. In particular, {to the aim of applications}, both for approximation {via $\Gamma$-convergence} and for {the determination of} relaxation {of variational integrals,} it is not natural to assume additionally $u\in L^p$ ({see for example {the forthcoming paper}} \cite{CFI23}), therefore we} avoid the extra $L^p$
integrability condition. {To this aim}, we introduce a global reflection argument based on a bilipschitz map reflecting a neighborhood
of $\partial\Omega$ in $\Omega$, outside of $\Omega$ itself.

{The general strategy is standard, but to the best of our knowledge the details are new. For example, a similar result was obtained in \cite[Th.~3.1]{CagnettiScardia2011} with a more complex construction using the solution of an ODE 
(see \cite[Eq.~(3.5)]{CagnettiScardia2011} 
instead of the specific formula \eqref{eqdeffxt} below for the construction of the reflection.}

\begin{theorem}\label{theolipschitzset}
 Let $\Omega\subseteq\R^n$ be a bounded Lipschitz set. Then there are an open set $\omega\subseteq\R^n$ with $\partial\Omega\subset\omega$
  and a bilipschitz map $\Phi:\omega\to\omega$ such that $\Phi(x)=x$ for $x\in\partial\Omega$ and $\Phi(\omega\cap\Omega)=\omega\setminus\overline\Omega$.
\end{theorem}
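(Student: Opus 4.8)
The plan is to reduce the global statement to a local one via a partition of unity on the boundary, and then to construct the reflection map explicitly in a boundary chart. Since $\Omega$ is a bounded Lipschitz set, $\partial\Omega$ is covered by finitely many open cylinders $Q_i = R_i(U_i \times (-h_i,h_i)) + x_i$ (with $R_i \in \SO(n)$, $U_i \subset \R^{n-1}$ open) in each of which, after the rigid motion, $\Omega$ is the subgraph $\{(y',y_n): y_n < \gamma_i(y')\}$ of a Lipschitz function $\gamma_i: U_i \to \R$ with $\Lip \gamma_i =: L_i$. I would fix such a finite atlas and work first in a single chart, dropping the index $i$: the task becomes to build a bilipschitz map of a neighborhood of the graph $\{y_n = \gamma(y')\}$ onto itself, fixing the graph, and swapping the region just below the graph with the region just above it.

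First I would straighten the boundary: the map $(y',y_n) \mapsto (y', y_n - \gamma(y'))$ is bilipschitz (with inverse $(z',z_n)\mapsto(z', z_n+\gamma(z'))$) and carries the graph to the hyperplane $\{z_n = 0\}$, the subgraph to $\{z_n<0\}$, the epigraph to $\{z_n>0\}$. In these straightened coordinates I would define the reflection on a slab $\{|z_n| < \rho\}$ by a formula of the type appearing in \eqref{eqdeffxt}, e.g. $(z',z_n) \mapsto (z', -z_n)$ near the hyperplane, suitably damped so that it becomes the identity near $|z_n| = \rho$ and away from a slightly smaller cylinder over $U' \subset\subset U$; concretely one takes $(z', z_n)\mapsto (z', z_n - 2 z_n \chi(z',z_n))$ with $\chi$ a Lipschitz cutoff equal to $1$ on a small neighborhood of $\{z_n=0\}\cap(U'\times\R)$ and equal to $0$ outside a slightly larger set, and one checks that for $\rho$ small enough (depending on $\Lip\chi$) this map is bilipschitz onto its image, fixes $\{z_n=0\}$, and exchanges a one-sided neighborhood of it with the other. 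Conjugating back by the graph-straightening diffeomorphism gives a bilipschitz map $\Phi_i$ defined on a neighborhood $\omega_i$ of the portion of $\partial\Omega$ inside $Q_i$, equal to the identity outside a slightly smaller cylinder, fixing $\partial\Omega$, and swapping inside with outside.

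The remaining step is to patch the local maps $\Phi_i$ into a single global bilipschitz $\Phi$ on a neighborhood $\omega$ of $\partial\Omega$. Because each $\Phi_i$ differs from the identity only on a compact subset of $Q_i$, one cannot simply multiply them, but one can compose them: set $\Phi := \Phi_N \circ \cdots \circ \Phi_1$ on $\omega := \bigcap_i (\text{domain of the relevant composition})$, shrinking the slab parameters $\rho_i$ and the cutoffs so that the supports of the non-identity parts have controlled overlap and the composition is well defined on a genuine neighborhood of $\partial\Omega$. Each factor fixes $\partial\Omega$ and maps $\Omega$-side to $\bar\Omega^c$-side locally; since they agree with the identity wherever they are not actively reflecting, the composite still fixes $\partial\Omega$ and, by a connectedness/continuity argument along $\partial\Omega$, satisfies $\Phi(\omega\cap\Omega)=\omega\setminus\overline\Omega$. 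Bilipschitz continuity is preserved under composition of finitely many bilipschitz maps. I expect the main obstacle to be precisely this globalization: ensuring that after composing the local reflections the image of $\omega\cap\Omega$ is exactly $\omega\setminus\overline\Omega$ (neither too big nor too small) on the overlaps of charts, which requires a careful choice of the neighborhood $\omega$ and of the slab widths $\rho_i$ relative to the Lipschitz constants $L_i$ and the cutoff gradients — a partition-of-unity bookkeeping argument rather than a conceptual difficulty, but the point where all the estimates must be made compatible simultaneously.
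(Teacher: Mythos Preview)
Your approach is local (chart-by-chart) whereas the paper's is global: the partition of unity is used at the level of \emph{vector fields}, not maps. The paper first builds a single smooth field $\psi\in C^\infty_c(\R^n;\R^n)$ with $\psi\cdot\nu\ge\gamma>0$ $\calH^{n-1}$-a.e.\ on $\partial\Omega$ and $|\psi|=1$ there, by averaging the local graph directions $R_ie_n$ with cutoffs. Then $f(x,t):=x+t\psi(x)$ --- this is the actual content of \eqref{eqdeffxt}, a collar parametrization rather than a reflection formula --- is shown to be bilipschitz from $\partial\Omega\times(-\eps,\eps)$ onto an open set $\omega$ (this is where the technical work lies, via a projection estimate in Lemma~\ref{lemxyPxy}), and the global reflection is simply $\Phi(f(x,t)):=f(x,-t)$. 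No patching of maps is needed.

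The composition step $\Phi:=\Phi_N\circ\cdots\circ\Phi_1$ in your proposal is not bookkeeping but a genuine gap. On the overlap of two active regions, a point $y$ just inside $\Omega$ is sent by $\Phi_1$ to a point just outside, and then $\Phi_2$ reflects it back inside: the composite of two local reflections is, on the overlap, close to the identity rather than to a reflection, so it cannot satisfy $\Phi(\omega\cap\Omega)=\omega\setminus\overline\Omega$ there. The ``connectedness/continuity'' argument you invoke does not rescue this, because the side-swapping property is not an open-and-closed condition along $\partial\Omega$ once the composite fails to be a reflection on any overlap. Making the active supports disjoint would fix the algebra but then they could not cover $\partial\Omega$. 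The standard way out is precisely the paper's: average the \emph{directions} of reflection with a partition of unity to obtain one global transversal field, and reflect once along it.
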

We recall that a map $f:E\to f(E)\subseteq\R^n$, for $E\subseteq\R^n$, is bilipschitz 
if there is
 $L>0$ such that
\begin{equation}
 \frac1L |x- y|\le |f(x)-f( y)|\le L|x- y|
\end{equation}
for all $x, y\in E$. 
This is the same as saying that $f$ is
 injective, Lipschitz, with a Lipschitz inverse $f^{-1}:f(E)\to E$. 

Moreover, a set $\Omega$ is Lipschitz if for every $x\in\partial\Omega$ there are {$\eps_x>0$}, $\lipschitzg_x\in\Lip(\R^{n-1})$
and an isometry $A_x:\R^n\to\R^n$ such that $A_x0=x$ and
\begin{equation}\label{eqdeflipschitzset}
 B_{{\eps_x}}(x)\cap\Omega
 =B_{{\eps_x}}(x)\cap A_x\{(y',y_n)\in\R^{n-1}\times \R: y_n<\lipschitzg_x(y')\}.
\end{equation}
{Obviously $\lipschitzg_x(0)=0$; if $|y'|< \eps_x/(\Lip(\lipschitzg_x)+1)$ then ${A_x}(y',{\lipschitzg_x}(y'))\in B_{\eps_x}{(x)}\cap \partial\Omega$.}
If $\Omega$ is bounded, there are $\eps_0$ and $L_0$ such that 
{one can choose}
$\eps_x\ge \eps_0$ and $\Lip(\lipschitzg_x)\le L_0$
for all $x\in\partial\Omega$.

{We start with defining a smooth vector field playing the role of the normal field to $\partial\Omega$, {which under our hypotheses is only {a function in} $L^\infty(\partial \Omega;S^{n-1})$}.}
\begin{lemma}\label{lemmalipnormal}
 Let $\Omega\subseteq\R^n$ be a bounded Lipschitz set. Then there are $\gamma>0$ and a map $\psi\in C^\infty_c(\R^n;\R^n)$ such that
 $\psi(x)\cdot\nu(x)\ge \gamma$ for {$\calH^{n-1}$}-almost every $x\in\partial\Omega$ and $|\psi|=1$ on $\partial\Omega$.
\end{lemma}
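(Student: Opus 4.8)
The plan is to build $\psi$ by a partition of unity from the local graph descriptions of $\partial\Omega$. Fix $x\in\partial\Omega$ with the isometry $A_x$ and the Lipschitz graph $\lipschitzg_x$ as in \eqref{eqdeflipschitzset}: locally $\Omega$ lies below the graph of $\lipschitzg_x$ in the rotated coordinates. For such a graph the outer unit normal at a point of differentiability is
\[
 n_x(y')=\frac{A_x(-\nabla\lipschitzg_x(y'),1)}{\sqrt{1+|\nabla\lipschitzg_x(y')|^2}},
\]
and the crucial quantitative fact is that the \emph{constant} vector $e_x:=A_x e_n$ satisfies $e_x\cdot n_x(y')=1/\sqrt{1+|\nabla\lipschitzg_x(y')|^2}\ge 1/\sqrt{1+L_0^2}=:\gamma_0>0$ for $\calH^{n-1}$-a.e.\ point in the chart, using $\Lip(\lipschitzg_x)\le L_0$. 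So in each chart a \emph{smooth, even constant} transversal field is already available with a uniform lower bound on the pairing with $\nu$.

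Next I would pass to a finite cover. Since $\partial\Omega$ is compact, choose finitely many points $x_1,\dots,x_N\in\partial\Omega$ such that the balls $B_{\eps_0/2}(x_i)$ cover $\partial\Omega$, and pick a smooth partition of unity $\{\varphi_i\}_{i=1}^N$ with $\varphi_i\in C^\infty_c(B_{\eps_0}(x_i);[0,\infty))$, $\sum_i\varphi_i\equiv 1$ on a neighborhood of $\partial\Omega$. Set $\tilde\psi:=\sum_{i=1}^N\varphi_i\, e_{x_i}$, which is in $C^\infty_c(\R^n;\R^n)$. For $\calH^{n-1}$-a.e.\ $x\in\partial\Omega$, writing $x\in B_{\eps_0}(x_i)$ for the indices with $\varphi_i(x)>0$ and using that each such $x$ lies in the chart of $x_i$ (here one must check the cover radii are compatible, e.g.\ shrink so that $B_{\eps_0}(x_i)\cap\partial\Omega$ is a graph over the $x_i$-chart), one gets
\[
 \tilde\psi(x)\cdot\nu(x)=\sum_{i:\,\varphi_i(x)>0}\varphi_i(x)\,e_{x_i}\cdot\nu(x)\ge\gamma_0\sum_i\varphi_i(x)=\gamma_0>0.
\]
This gives the transversality with $\gamma:=\gamma_0$ before normalization, but $|\tilde\psi|$ need not equal $1$ on $\partial\Omega$; note $|\tilde\psi|\le\sum_i\varphi_i=1$ everywhere, and $|\tilde\psi(x)|\ge\tilde\psi(x)\cdot\nu(x)\ge\gamma_0$ on $\partial\Omega$, so $\tilde\psi$ is nonvanishing on a neighborhood $U$ of $\partial\Omega$.

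Finally I would normalize. On the open set $U$ where $\tilde\psi\neq 0$, the map $x\mapsto\tilde\psi(x)/|\tilde\psi(x)|$ is smooth; multiply by a cutoff $\chi\in C^\infty_c(U;[0,1])$ equal to $1$ on a smaller neighborhood $U'\supset\partial\Omega$ and $0$ outside $U$, and set $\psi:=\chi\,\tilde\psi/|\tilde\psi|\in C^\infty_c(\R^n;\R^n)$. Then $|\psi|=1$ on $\partial\Omega$, and for $\calH^{n-1}$-a.e.\ $x\in\partial\Omega$,
\[
 \psi(x)\cdot\nu(x)=\frac{\tilde\psi(x)\cdot\nu(x)}{|\tilde\psi(x)|}\ge\frac{\gamma_0}{1}=\gamma_0=:\gamma,
\]
since $|\tilde\psi|\le 1$. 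The main technical obstacle is purely bookkeeping: arranging the finite cover so that every point of $\partial\Omega$ which is charted by $x_i$ for the purpose of the lower bound actually lies in the region where $B_{\eps_0}(x_i)\cap\partial\Omega$ coincides with the graph of $\lipschitzg_{x_i}$; this is handled by first shrinking the $\eps_x$ uniformly to some $\eps_0$ and choosing the covering balls of radius $\le\eps_0/2$, together with the remark following \eqref{eqdeflipschitzset} that graph points stay in $B_{\eps_x}(x)\cap\partial\Omega$ for $|y'|$ small. Everything else — smoothness, compact support, the inequalities — is immediate from the partition of unity.
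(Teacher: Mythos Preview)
Your proof is correct and follows essentially the same approach as the paper: cover $\partial\Omega$ by finitely many charts, take the constant direction $R_ie_n$ in each (note $A_x$ is affine with $A_x0=x$, so by $e_x$ you mean $DA_x\,e_n$), combine smoothly, and normalize. The only cosmetic differences are that the paper uses overlapping bumps $\theta_i$ with $\theta_i=1$ on smaller balls rather than a genuine partition of unity, and normalizes via $\psi^*/\sqrt{|\psi^*|^2+\varphi}$ with an auxiliary $\varphi\ge0$ vanishing on $\partial\Omega$, instead of a cutoff times $\tilde\psi/|\tilde\psi|$.
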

This is well-known (see, for example, \cite[Lemma~4.1] {ContiMaggi2008}), for completeness we include the short proof.
\begin{proof}
{The compact set $\partial\Omega$ can be covered by a finite family of balls $B_i:=B_{r_i}({z_i})$, such that in each of the larger balls $B_i^*:=B_{2r_i}({z_i})$ 
\eqref{eqdeflipschitzset} reads
\begin{equation}
 B_i^*\cap \Omega=B_i^*\cap {A_i} \{(x',x_n): x_n<\lipschitzg_i(x')\}
\end{equation}
for some $\lipschitzg_i\in \Lip(\R^{n-1})$ {and isometry $A_i$}. 
If $x'$ is such that $y:={A_i}(x', \lipschitzg_i(x'))\in\partial\Omega\cap B_i^*$
and $\lipschitzg_i$ is differentiable at $x'$,
then the outer normal
obeys $\nu(y)=R_i (-D\lipschitzg_i(x'),1)/\sqrt{1+|D\lipschitzg_i|^2(x')}$, {where $R_i:=\Dnabla A_i\in \O(n)$,}
so that
\begin{equation}
\nu\cdot R_ie_n\ge \gamma^*
:=\frac{1}{\sqrt{1+\max_i (\Lip(\lipschitzg_i))^2}}>0
\end{equation}
$\calH^{n-1}$-almost everywhere on $B_i^*\cap\partial\Omega$.
We fix cutoff functions  $\theta_i\in C^\infty_c(B^*_i;[0,1])$ with $\theta_i=1$ on $B_i$, and set $\psi^*(x):=\sum_i \theta_i(x) R_i e_n$. Then for {$\calH^{n-1}$-}almost every point $x\in \partial\Omega$ we have
\begin{equation}\label{eqpsixnux}
\psi^*(x)\cdot\nu(x)=\sum_{i: x\in B^*_i} \theta_i(x) (R_i e_n)\cdot\nu(x)
\ge \sum_{i: x\in B^*_i} \theta_i(x) \gamma^*\ge\gamma^*,  
\end{equation}
since at least one of the $\theta_i(x)$ equals 1.}

 {It only remains to normalize. Condition \eqref{eqpsixnux} implies $|\psi^*|\ge\gamma^*$ on $\partial\Omega$, and therefore} $|\psi^*|> \gamma^*/2$ in a neighborhood of $\partial\Omega$.
 We select
 $\varphi\in C^\infty(\R^n;[0,1])$ such that $\varphi=0$ on $\partial\Omega$, and $\varphi>0$ on the set $|{\psi^*}|\le\gamma^*/2$. Then $\psi:=\psi^*/\sqrt{|\psi^*|^2+\varphi}$ has the desired properties {with $\gamma:=\gamma^*/\max|\psi^*|(\partial\Omega)$.}
\end{proof}

\begin{lemma}\label{lemxyPxy} Let $\Omega\subseteq\R^n$ be a bounded Lipschitz set, $\psi$ as in  Lemma~\ref{lemmalipnormal}. 
 There are $\rho>0$ {and} $c>0$ such that for any $x,y\in\partial\Omega$ with $|x-y|<\rho$
 one has
 \begin{equation}\label{eqxyPxy}
  |x-y|\le c |(\Id-\psi(y)\otimes \psi(y))(x-y)|.
 \end{equation}
\end{lemma}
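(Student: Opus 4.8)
First note that, since $|\psi(y)|=1$ for $y\in\partial\Omega$, the matrix $\Id-\psi(y)\otimes\psi(y)$ is the orthogonal projection onto $\psi(y)^{\perp}$, so that
\[
|(\Id-\psi(y)\otimes\psi(y))(x-y)|^{2}=|x-y|^{2}-\bigl(\psi(y)\cdot(x-y)\bigr)^{2}.
\]
Thus \eqref{eqxyPxy} is equivalent to the bound $|\psi(y)\cdot(x-y)|\le\theta\,|x-y|$ for all $x,y\in\partial\Omega$ with $|x-y|<\rho$, for some $\theta=\theta(\Omega)\in[0,1)$, in which case $c:=(1-\theta^{2})^{-1/2}$ works. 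The plan is to prove this reformulation by contradiction and compactness. If it failed, there would be $x_{k}\neq y_{k}$ in $\partial\Omega$ with $|x_{k}-y_{k}|<1/k$ and $|\psi(y_{k})\cdot v_{k}|>1-1/k$, where $v_{k}:=(x_{k}-y_{k})/|x_{k}-y_{k}|$. Passing to a subsequence, $y_{k}\to y_{0}\in\partial\Omega$, $v_{k}\to v_{0}\in S^{n-1}$, and $\psi(y_{k})\to\psi(y_{0})$ by continuity, so that $|\psi(y_{0})\cdot v_{0}|=1$, i.e.\ $v_{0}=\sigma\psi(y_{0})$ with $\sigma\in\{-1,1\}$. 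Everything rests on excluding this.

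I would work in a Lipschitz chart for $\partial\Omega$ at $y_{0}$: an isometry $A$ with $A0=y_{0}$ and linear part $R\in\O(n)$, together with $G\in\Lip(\R^{n-1})$, $G(0)=0$, $\Lip(G)\le L_{0}$, such that in a neighborhood of $y_{0}$ one has $\partial\Omega=A\{(z',G(z')):z'\in\R^{n-1}\}$ and $\Omega=A\{z_{n}<G(z')\}$. For $k$ large write $x_{k}=A(\xi_{k}',G(\xi_{k}'))$ and $y_{k}=A(\eta_{k}',G(\eta_{k}'))$, so $\xi_{k}',\eta_{k}'\to0$; put $h_{k}:=\xi_{k}'-\eta_{k}'\neq0$, $\hat h_{k}:=h_{k}/|h_{k}|$, and let $s_{k}:=(G(\xi_{k}')-G(\eta_{k}'))/|h_{k}|\in[-L_{0},L_{0}]$ be the chord slope. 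Since $x_{k}-y_{k}=R(h_{k},\,G(\xi_{k}')-G(\eta_{k}'))$ and $|x_{k}-y_{k}|=|h_{k}|\sqrt{1+s_{k}^{2}}$, we get $R^{T}v_{k}=(\hat h_{k},s_{k})/\sqrt{1+s_{k}^{2}}$. Along a further subsequence $\hat h_{k}\to\hat h_{0}\in S^{n-2}$ and $s_{k}\to s_{0}\in[-L_{0},L_{0}]$, hence $q:=R^{T}\psi(y_{0})=\sigma(\hat h_{0},s_{0})/\sqrt{1+s_{0}^{2}}$; writing $q=(q',q_{n})$ this gives $q'\sqrt{1+s_{0}^{2}}=\sigma\hat h_{0}$ and $q_{n}\sqrt{1+s_{0}^{2}}=\sigma s_{0}$.

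The crux is to identify $s_{0}$. Modifying $(x_{k},y_{k})$ by an arbitrarily small amount inside the open set of failing pairs — which by a Fubini argument one may do so that the segment $[\eta_{k}',\xi_{k}']$ meets the $\calL^{n-1}$-null set $N$ of non-differentiability points of $G$ in an $\calH^{1}$-null set — one has $G(\xi_{k}')-G(\eta_{k}')=\int_{0}^{1}DG(\eta_{k}'+th_{k})\cdot h_{k}\,dt$, so $s_{k}=c_{k}\cdot\hat h_{k}$ with $c_{k}:=\int_{0}^{1}DG(\eta_{k}'+th_{k})\,dt$. For each $r>0$, once $[\eta_{k}',\xi_{k}']\subset B_{r}(0)$, the vector $c_{k}$ is an average of values of $DG$ on $B_{r}(0)\setminus N$ and hence lies in $\conv(M^{r})$, where $M^{r}:=\overline{\{DG(z):|z|<r,\ z\notin N\}}\subset\overline{B_{L_{0}}}$; passing to a subsequence with $c_{k}\to c_{0}$ we obtain $c_{0}\in\conv(M^{r})$ for every $r>0$, and $s_{0}=c_{0}\cdot\hat h_{0}$. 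On the other hand, for $z\notin N$ the point $A(z,G(z))\in\partial\Omega$ is a differentiability point with outer normal $R(-DG(z),1)/\sqrt{1+|DG(z)|^{2}}$, so $\psi(A(z,G(z)))\cdot R(-DG(z),1)/\sqrt{1+|DG(z)|^{2}}\ge\gamma$; since $A(z,G(z))\to y_{0}$ as $z\to0$ and $\psi$ is uniformly continuous, this yields, with $\omega$ a modulus of continuity of $\psi$ and $C:=\sqrt{1+L_{0}^{2}}$,
\[
q_{n}-q'\cdot m\ \ge\ (\gamma-\omega(Cr))\sqrt{1+|m|^{2}}\qquad\text{for all }m\in M^{r}.
\]
For $r$ small the right-hand side defines a concave function of $m$, hence $m\mapsto q_{n}-q'\cdot m-(\gamma-\omega(Cr))\sqrt{1+|m|^{2}}$ is concave and nonnegative on $M^{r}$, so nonnegative on $\conv(M^{r})$; evaluating at $c_{0}$ and letting $r\to0$ gives $q_{n}-q'\cdot c_{0}\ge\gamma>0$. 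But $q_{n}-q'\cdot c_{0}=\sigma(s_{0}-\hat h_{0}\cdot c_{0})/\sqrt{1+s_{0}^{2}}=0$ because $s_{0}=c_{0}\cdot\hat h_{0}$, a contradiction. This produces $\theta\in[0,1)$ and $\rho>0$, and hence \eqref{eqxyPxy}.

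The step I expect to be genuinely delicate is precisely the one above: since $\partial\Omega$ is merely Lipschitz there is no uniform first-order expansion of $G$, and the crude estimate $|G(\xi')-G(\eta')|\le L_{0}|\xi'-\eta'|$ is too lossy (it would only give the claim when $L_{0}<1$). The compactness argument bypasses this, the decisive structural facts being that the limiting chord slope $s_{0}$ is an integral average — hence a convex combination — of limiting gradients of $G$, and that the ``transversality defect'' $m\mapsto q_{n}-q'\cdot m-\gamma\sqrt{1+|m|^{2}}$ is concave, so that its nonnegativity at the relevant gradients propagates to their convex hull. The measure-theoretic point of choosing the connecting segments generic with respect to $N$ is a routine Fubini argument within the open set of failing pairs.
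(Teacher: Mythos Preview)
Your proof is correct and takes a genuinely different route from the paper's. The paper argues directly: after reducing to $y=0$ in a local chart, it introduces the plane $\Pi=\Span\{\hat x,e_n\}$ (with $\hat x=(x'/|x'|,0)$), splits $m=\psi(0)$ as $m_\Pi+m_\perp$, and treats the cases $|m_\perp|\ge\gamma/2$ and $|m_\perp|<\gamma/2$ separately. In the main case it exhibits a unit vector $m_\Pi^\perp\in\Pi\cap m^\perp$ and shows, by integrating along the graph path $t\mapsto(t\hat x',G(t\hat x'))$ and using $m_\Pi\cdot\nu\ge\gamma/4$, that $|m_\Pi^\perp\cdot x|\ge\tfrac\gamma4|x'|$; this yields the explicit constant $c=4(1+L)/\gamma^2$. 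Your contradiction--compactness scheme instead reformulates the claim as the cone condition $|\psi(y)\cdot v|\le\theta<1$, and the decisive structural idea is different: the limiting chord slope $s_0$ is a convex average of values of $DG$ near the origin, and the concavity of $m\mapsto q_n-q'\cdot m-\gamma\sqrt{1+|m|^2}$ propagates the pointwise transversality from the gradient cluster $M^r$ to its convex hull. What you gain is a case-free argument that avoids having to guess the right projection direction; what you lose is the explicit constant. Both routes rest on the same two ingredients --- the fundamental-theorem representation of the chord increment and the pointwise bound $\psi\cdot\nu\ge\gamma$ --- and both need a mild measure-theoretic disclaimer about a.e.\ differentiability of $G$ along the relevant segments (the paper handles this by noting that \eqref{eqxPx} need only be verified for a.e.\ $x$, by continuity of both sides).
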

\newcommand\muno{m_\Pi}
\newcommand\mdue{m_\perp}
\newcommand\nuuno{\nu_\Pi}
\newcommand\nudue{\nu_\perp}
\begin{proof}
We can assume $x\ne y$.
After a change of coordinates, and choosing $\rho$ sufficiently small, we can assume that $y=0$, 
and that
\begin{equation}\label{eqlippro}
\partial\Omega\cap B_{(1+L)\rho}(0)=B_{(1+L)\rho}(0)
\cap \{(z', \lipschitzg(
z')): z'\in \R^{n-1}\}
\end{equation}
for some $L$-Lipschitz function $\lipschitzg:\R^{n-1}\to\R$.
The values of $\rho$ and $L$ have bounds that depend only on $\Omega$.
Let $m:=\psi(0)$, so that $P:=\Id-m\otimes m$ is the projection onto the space orthogonal to $m$.  
Condition \eqref{eqxyPxy} then translates into
 \begin{equation}\label{eqxPx}
  |x|\le c |Px|
 \end{equation}
 {for any $x\in\partial\Omega\cap {B_\rho(0)}$. As both sides of  \eqref{eqxPx} are continuous, it suffices to prove it for $\calH^{n-1}$-almost every $x$. By  \eqref{eqlippro},}
we have $x=(x',\lipschitzg(x'))$ for some $x'\in\R^{n-1}\setminus\{0\}$.  
 Define $\hat x:={(\hat x',0):=(\frac{x'}{|x'|},0)}\in S^{n-2}{\times\{0\}}\subset\R^{n-1}{\times\{0\}}$.
 Let $\Pi:=\Span\{\hat x, e_n\}$, we remark that $\hat x\cdot e_n=0$ and that $x\in \Pi$.
 Let $\muno$ be the orthogonal projection of $m$ on $\Pi$, namely
 \begin{equation}\label{e:muno}
 \muno:=(\hat x\otimes \hat x + e_n\otimes e_n)m 
 =(\hat x\cdot m)\hat x +(e_n\cdot m)e_n
 \in\Pi, 
 \end{equation} 
 and $\mdue:=m-\muno$, so that $m=\muno+\mdue$.
Then, {as $x\in\Pi$ and $m_\perp\in\Pi^\perp$ we have}
\begin{equation}\label{eqPxmumd}
 |Px|^2=|(\Id-(\muno+\mdue)\otimes (\muno+\mdue))x|^2
 =|x-(\muno\cdot x)\muno|^2+|\mdue|^2|\muno\cdot x|^2.
\end{equation}
We distinguish two cases. If $|\mdue|\ge \frac12\gamma$, {with $\gamma$ the constant from 
Lemma~\ref{lemmalipnormal},}
the first term leads to
\begin{equation}
 |Px|\ge |x-(\muno\cdot x)\muno|\ge |x|-|\muno|^2|x| = |\mdue|^2|x|
 \ge \frac{\gamma^2}{4}|x|
\end{equation}
which concludes the proof of \eqref{eqxPx} in this case. 

Assume now that $|\mdue|\le \frac12\gamma$.
{For any $z'\in\R^{n-1}$ with $|z'|<\rho$ we have
$z:=(z',\lipschitzg(z'))\in\partial\Omega\cap B_{\rho(1+L)}(0)$, and if $\lipschitzg$ is differentiable in $z'$ the outer normal is}
\begin{equation}\label{eqnormalz}
 \nu({z})=\frac{1}{\sqrt{1+|D\lipschitzg|^2(z')}} \begin{pmatrix} -D\lipschitzg(z')\\ 1 \end{pmatrix}.
\end{equation}
{Recalling $\psi(z)\cdot \nu(z)\ge\gamma$, }
\begin{equation}\begin{split}
 m\cdot\nu{(z)} 
 {=\psi(z)\cdot\nu{(z)}+(\psi(0)-\psi(z))\cdot \nu(z)
 \ge\gamma - \|\psi\|_{C^1}\,|z|}
\end{split}\end{equation}
{so that, if $\rho<\gamma/(4\|\psi\|_{C^1}(1+L))$,
the condition $|\mdue|\le \frac12\gamma$
implies}
\begin{equation}\label{eqmunonugamma}\begin{split}
 \muno\cdot\nu{(z)} 
 =
 m\cdot\nu{(z)} -\mdue\cdot\nu{(z)}  \ge \gamma-
 {\frac14\gamma}-
 |\mdue|\ge\frac1{4}\gamma.
\end{split}\end{equation}
{By \eqref{eqnormalz} and} \eqref{e:muno},
\begin{equation}\label{eqmunonu}
 \muno\cdot\nu(z)= \frac{1}{\sqrt{1+|D\lipschitzg|^2(z')}} \left[  (e_n\cdot m)
 -(\hat x\cdot m){\hat x'}\cdot D\lipschitzg(z') \right].
\end{equation}
With a slight abuse of notation we have used the dot to denote the inner product both in $\R^{n-1}$ and $\R^n$.

Let $\zeta(t):=(t{\hat x'} ,\lipschitzg(t {\hat x'}))$, for $t\in[0,|x'|]$.
{For $\calH^{n-1}$-almost every choice of $x'\in B'_\rho$ we have that for $\calH^1$-almost every $t$ the function $\lipschitzg$ is differentiable in $t\hat x'$.}
Clearly $\zeta(0)=0$, $\zeta(|x'|)=x$, 
{and $\zeta$ is Lipschitz with}
\begin{equation}
 D\zeta(t)=\hat x + ({\hat x'\cdot}D\lipschitzg(t{\hat x'})) e_n.
\end{equation}
We define
\begin{equation}
 \muno^\perp:=(e_n\otimes \hat x-\hat x\otimes e_n) m =  (\hat x\cdot m) e_n-(e_n\cdot m)\hat x\in\Pi\,,
\end{equation}
and compute
\begin{equation}
\begin{split}
 \muno^\perp \cdot x &= 
 \int_0^{|x'|} \muno^\perp \cdot {D\zeta}(t)dt
 = \int_0^{|x'|} \left[(\hat x\cdot m) {(\hat x'\cdot D\lipschitzg(t{\hat x'}))} - (e_n\cdot m) \right]
 dt.\end{split}
\end{equation}
Using first \eqref{eqmunonu} and then \eqref{eqmunonugamma},
\begin{equation}
\begin{split}
 \muno^\perp \cdot x &= 
-\int_0^{|x'|} \muno\cdot\nu(\zeta(t)) \sqrt{1+|D\lipschitzg|^2({t \hat x'})} dt
 \le - |x'|\frac\gamma{4}.
 \end{split}
\end{equation}
With $|\muno^\perp |{\le} 1$ and 
$\muno^\perp\cdot m=0$ (note that $m_\Pi^\perp\in\Pi$ and $m_\Pi^\perp\cdot m_\Pi=0$) we obtain
\begin{equation}
 |Px|\ge{\left|x\cdot \frac{m_\Pi^\perp}{|m_\Pi^\perp|}\right|{\ge} |x\cdot m_\Pi^\perp|}
 \ge \frac\gamma{ 4}
 |x'|\,.
\end{equation}
Recalling that $|x|\le |x'|+|\lipschitzg(x')|\le (1+L)|x'|$, this concludes the proof of \eqref{eqxPx}, and therefore of \eqref{eqxyPxy}
{(with $c=4(1+L)/\gamma^2$).}
  \end{proof}
  
{We are now ready to prove Theorem~\ref{theolipschitzset}. Before starting, we}
recall that by Brower's invariance of domain theorem any injective continuous map $f:E\subseteq\R^n\to\R^n$ is open, in the sense that if $E$ is open then $f(E)$ is open.

\begin{proof}[Proof of Theorem~\ref{theolipschitzset}.] {\em Step 1.}
Let $\psi$ be as in Lemma~\ref{lemmalipnormal}, 
{$\rho$ as in Lemma~\ref{lemxyPxy},}
and 
 define $f:\partial\Omega\times \R\to\R^n$ by 
 \begin{equation}\label{eqdeffxt}
 f(x,t):=x+t\psi(x).
 \end{equation}
 We claim that there are $\eps>0$ and $C>0$, depending only on $\Omega$, such that 
for all $x,y\in\partial\Omega$, all $t,s\in(-\eps,\eps)$,
 \begin{equation}\label{eqxytsf}
  |x-y|+|t-s|\le C | f(x,t)-f(y,s)|.
 \end{equation}
 In order to prove \eqref{eqxytsf} we write
 \begin{equation}\label{eqfxytdsfno}
  f(x,t)-f(y,s)=x-y+t\psi(x)-s\psi(y).
 \end{equation}
We shall choose $\eps\le\rho$. If $|x-y|\le\rho$ we can use Lemma~\ref{lemxyPxy}.
 Let $P_y$ be the projection onto $\psi(y)^\perp$. Then
\begin{equation}
 P_y( f(x,t)-f(y,s))=P_y(x-y)+tP_y(\psi(x)-\psi(y)).
 \end{equation}
 For the second term we use 
 $|\psi(x)-\psi(y)|
 \le  \|\psi\|_{C^1} |x-y|$.
  For the first term, we use \eqref{eqxyPxy}. We then obtain
  \begin{align}\label{e:f bilipschitz}
   |f(x,t)-f(y,s)|&\ge |P_y(x-y)|-|t|\, |\psi(x)-\psi(y)|\notag\\
   &\ge \frac1c |x-y| - \eps \|\psi\|_{C^1} |x-y|
   \ge \frac1{2c} |x-y|
  \end{align}
provided that $\eps\le 1/(2c\|\psi\|_{C^1})$.
To estimate $t-s$ we write \eqref{eqfxytdsfno} as
\begin{equation}\label{fxtfyss}
   f(x,t)-f(y,s)=
   (t-s)\psi(x)+
   x-y+s(\psi(x)-\psi(y))
\end{equation}
so that
\begin{equation}\label{eq:t-s}
|t-s|\le |f(x,t)-f(y,s)|+
   |x-y|+|s|\,\|\psi\|_{C^1}\,|x-y|
\end{equation}
which, recalling that $|s|\|\psi\|_{C^1}\le 1/(2c)$, 
{together with}
{\eqref{e:f bilipschitz}}
concludes the proof of  \eqref{eqxytsf} in this case.

 Assume now that $|x-y|>\rho$, still with $|t|,|s|<\eps$. Then \eqref{eqfxytdsfno} gives
 \begin{equation}
 | f(x,t)-f(y,s)|\ge |x-y|-|t|-|s|
 \ge \frac12 |x-y| +\frac12\rho-2\eps.
 \end{equation}
Choosing $\eps\le\rho/4$ {and recalling \eqref{eq:t-s}, this} concludes the proof of  \eqref{eqxytsf}.
 
 {\em Step 2.}
 We define 
 $\omega:=f(\partial\Omega\times (-\eps,\eps))$ {and check that $f|_{\partial\Omega\times(-\varepsilon,\varepsilon)}$ is bilipschitz.}
 Let $y,y'\in\omega$. Then there are $x,x'\in\partial\Omega$, $t,t'\in (-\eps,\eps)$, such that
 \begin{equation}
  y=f(x,t), \hskip5mm y'=f(x',t'),
 \end{equation}
which by \eqref{eqxytsf} {and} \eqref{fxtfyss} in Step 1  implies
\begin{equation}
 \frac1C (|x-x'|+|t-t'|)\le  |y-y'| \le C (|x-x'|+|t-t'|).
\end{equation}
{Hence $f|_{\partial\Omega\times(-\varepsilon,\varepsilon)}$ is bilipschitz.
Let now} $\Phi:\omega\to\omega$ be
\begin{equation}
\Phi(y):=f(f_x^{-1}(y), -f_t^{-1}(y))
\end{equation}
where $f^{-1}_x$ and $f^{-1}_t$ denote the components of $f^{-1}$. 
Obviously $\Phi(y)=y$ if $y\in\partial\Omega\subseteq\omega$. 
We check that $\Phi$ is bilipschitz. {Indeed, arguing as above, setting}
 \begin{equation}
  Y:=f(x,-t)=\Phi(y), \hskip5mm Y':=f(x',-t')=\Phi(y'),
 \end{equation}
{we get}
\begin{equation}
 \frac1C (|x-x'|+|t-t'|)\le  |Y-Y'| \le C (|x-x'|+|t-t'|),
\end{equation}
therefore
$\Phi$ is bilipschitz.

It remains to show that $\omega$ is open {if $\eps$ is sufficiently small.}
Assume $\eps\le\eps_0/{(1+L_0)}$, with $\eps_0$, $L_0$ {the quantities introduced right after} \eqref{eqdeflipschitzset}.
{Select $y\in \omega$, and let}
 $x\in\partial\Omega$, $s\in (-\eps,\eps)$
 {be such that $y=f(x,s)$.}
  Choose
 $A_x$, {$G_x$} {as} in \eqref{eqdeflipschitzset} and
let $h:B'_{\eps_0/{(1+L_0)}}\times (-\eps,\eps)\to{\omega}$ be defined by
\begin{equation}
 h(z',t):=f( A_x(z',{\lipschitzg_x}(z')), t).
\end{equation}
The map $h$ is injective and Lipschitz, and therefore open.
Therefore $\omega$ contains an open neighborhood of $y=f(x,s)$.
\end{proof}
{\begin{remark}If $\psi$ were only $L^\infty$, the map $f(x,t)$ may not be invertible in
$\partial\Omega\times(-\eps,\eps)$, for all choices of $\eps>0$. This happens for example if
$\partial\Omega$ is (locally) the graph of the function $\chi_{(0,1)}{(x)|x|}^{1+\alpha}$ for $x\in (-1,1)$, where $0<\alpha<1$ {(see \eqref{e:f bilipschitz}).}
	\end{remark}}

\begin{theorem}\label{propextension}
	Let $\Omega\subseteq\R^n$ {be} a bounded {open} Lipschitz set, $\theta>0$, and $u\in SBV(\Omega;\R^m)$ such that
$E_{|\cdot|^p,g_0}[u,\Omega]<\infty$, for some $p\geq1$ and $g_0$ satisfying \Hgzerouno-\Hgzerodue.
Then there are an open set $\Omega'$ with $\overline\Omega\subset \Omega'$ and {$|\Omega'|\leq|\Omega|+\theta$}, 
and a function $U\in SBV(\R^n;\R^m)$ such that
	$U=u$ on $\Omega$, $|DU|(\partial\Omega)=0$, 
{
\begin{equation}\label{e:Psi U}
\int_{\Omega'}|\nabla U|^p\,dx\leq\int_{\Omega}|\nabla u|^p\,dx+\theta\,,
\end{equation}
and
\begin{equation}\label{e:g0 U}
\int_{\Omega'\cap J_U}g_0(|[U]|)d\calH^{n-1}\leq\int_{J_u}g_0(|[u]|)d\calH^{n-1}+\theta\,.
\end{equation}
}
In particular, 
	\begin{equation}
	E_{|\cdot|^p,g_0}[U,\Omega']\le E_{|\cdot|^p,g_0}[u,\Omega]+2\theta.
	\end{equation}
	If $\calH^{n-1}(J_u)<\infty$, then 
	additionally 
	{$\calH^{n-1}(J_U)<\infty$ and}
	$\calH^{n-1}(J_U\cap\Omega'\setminus\Omega)<\theta$; if $\nabla u=0$ $\calL^n$-almost everywhere on $\Omega$ then also $\nabla U=0$ $\calL^n$-almost everywhere on $\Omega'$. If $u\in W^{1,p}(\Omega;\R^m)$ then $U\in W^{1,p}(\Omega';\R^m)$.
\end{theorem}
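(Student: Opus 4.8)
The plan is to extend $u$ by reflecting it across $\partial\Omega$ through the bilipschitz ``swap'' map $\Phi$ of Theorem~\ref{theolipschitzset}. That theorem can be applied with $|\omega|$ as small as we please (it suffices to take $\varepsilon$ small in its proof), the bilipschitz constant $L$ of $\Phi$ remaining bounded in terms of $\Omega$ only; moreover $\Phi$ maps the inner collar $\omega\cap\Omega$ onto the outer collar $\omega\setminus\overline\Omega$ and $\Phi=\Id$ on $\partial\Omega$. Since $\omega\cap\Omega$ shrinks to the empty set as $\omega$ shrinks, absolute continuity of the finite measures $|\nabla u|^p\calL^n$, $g_0(|[u]|)\,\calH^{n-1}\res J_u$, and (when $\calH^{n-1}(J_u)<\infty$) $\calH^{n-1}\res J_u$ lets us fix $\omega$ so small that $|\omega|\le\theta$, $L^{n+p}\int_{\omega\cap\Omega}|\nabla u|^p\,dx\le\theta$, $L^{n-1}\int_{J_u\cap\omega\cap\Omega}g_0(|[u]|)\,d\calH^{n-1}\le\theta$ and $L^{n-1}\calH^{n-1}(J_u\cap\omega\cap\Omega)\le\theta$. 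We then set $\Omega':=\Omega\cup\omega$, which for $\omega$ thin enough is a bounded Lipschitz open set (its boundary is a near--identity $C^\infty$ deformation of $\partial\Omega$) with $\overline\Omega\subset\Omega'$ and $|\Omega'|\le|\Omega|+\theta$, and we define $U:=u$ on $\Omega$, $U:=u\circ\Phi$ on $\omega\setminus\overline\Omega$, and $U:=0$ on $\R^n\setminus\Omega'$.

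The first point is that $U\in SBV(\Omega';\R^m)$ with $|DU|(\partial\Omega)=0$. Because $\Phi$ is bilipschitz and $\Phi(\omega\setminus\overline\Omega)=\omega\cap\Omega\subset\Omega$, composition with $\Phi$ preserves $SBV$: $u\circ\Phi\in SBV(\omega\setminus\overline\Omega;\R^m)$ with $\nabla(u\circ\Phi)=(\nabla u\circ\Phi)\,\Dnabla\Phi$ a.e., $J_{u\circ\Phi}=\Phi^{-1}(J_u)$ up to $\calH^{n-1}$--null sets, $|[u\circ\Phi]|=|[u]|\circ\Phi$ there, and no Cantor part (a bilipschitz homeomorphism respects the three--part splitting of the derivative). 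Gluing the $BV$ functions $u$ and $u\circ\Phi$ across the Lipschitz interface $\partial\Omega$ (see \cite{AFP}) shows $U\in BV(\Omega';\R^m)$ with $DU\res\partial\Omega=(T^{+}U-T^{-}U)\otimes\nu\,\calH^{n-1}\res\partial\Omega$, where $T^{-}U$ denotes the trace of $u$ from $\Omega$ and $T^{+}U$ that of $u\circ\Phi$ from $\omega\setminus\overline\Omega$. The core of the proof, and the step I expect to be the main obstacle, is that these two traces agree $\calH^{n-1}$--a.e.\ on $\partial\Omega$, i.e.\ the reflection introduces no jump along $\partial\Omega$.

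To establish the trace identity, fix $x_0\in\partial\Omega$ at which both one--sided traces exist as averaged limits over half balls (this holds $\calH^{n-1}$--a.e.), and write $a:=T^{-}U(x_0)$, $b:=T^{+}U(x_0)$. For small $r$ put $A_r:=\Phi\big((\omega\setminus\overline\Omega)\cap B_r(x_0)\big)$. Since $\Phi$ is $L$--bilipschitz, fixes $x_0$, and maps $\omega\setminus\overline\Omega$ into $\Omega$, we have $A_r\subseteq\Omega\cap B_{Lr}(x_0)$, and, because the Lipschitz set $\Omega$ has positive exterior density at $x_0$, the bilipschitz bounds give $c_1r^n\le|A_r|\le C_1r^n$. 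Applying the area formula to $\Phi$ and using these density estimates together with $|\Omega\cap B_{Lr}(x_0)|\le Cr^n$ and $|(\omega\setminus\overline\Omega)\cap B_r(x_0)|\ge cr^n$ one obtains a constant $C$ such that
\begin{equation*}
\frac1{|A_r|}\int_{A_r}|u-b|\,dy\le C\,\frac1{|(\omega\setminus\overline\Omega)\cap B_r(x_0)|}\int_{(\omega\setminus\overline\Omega)\cap B_r(x_0)}|u\circ\Phi-b|\,dy
\end{equation*}
and
\begin{equation*}
\frac1{|A_r|}\int_{A_r}|u-a|\,dy\le C\,\frac1{|\Omega\cap B_{Lr}(x_0)|}\int_{\Omega\cap B_{Lr}(x_0)}|u-a|\,dy .
\end{equation*}
Both right--hand sides tend to $0$ as $r\to0$ by definition of the traces, hence $|a-b|\le\frac1{|A_r|}\int_{A_r}|u-a|\,dy+\frac1{|A_r|}\int_{A_r}|u-b|\,dy\to0$, so $a=b$. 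Therefore $DU\res\partial\Omega=0$, which in particular yields $|DU|(\partial\Omega)=0$ and $\calH^{n-1}(J_U\cap\partial\Omega)=0$; as neither $Du$ nor $D(u\circ\Phi)$ has a Cantor part, $U\in SBV(\Omega';\R^m)$, and since $\Omega'$ is a bounded Lipschitz domain $U$ has an $L^1$ trace on $\partial\Omega'$, so extending it by $0$ gives $U\in SBV(\R^n;\R^m)$.

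It remains to collect the estimates. By the chain rule and the change of variables $z=\Phi(y)$, $\int_{\omega\setminus\overline\Omega}|\nabla U|^p\,dy\le L^{n+p}\int_{\omega\cap\Omega}|\nabla u|^p\,dz\le\theta$, which gives \eqref{e:Psi U}; by the area formula on $\Phi^{-1}(J_u)\cap(\omega\setminus\overline\Omega)$ and $|[U]|=|[u]|\circ\Phi$ there, $\int_{J_U\cap(\omega\setminus\overline\Omega)}g_0(|[U]|)\,d\calH^{n-1}\le L^{n-1}\int_{J_u\cap\omega\cap\Omega}g_0(|[u]|)\,d\calH^{n-1}\le\theta$, which together with $\calH^{n-1}(J_U\cap\partial\Omega)=0$ gives \eqref{e:g0 U}; summing the two estimates yields $E_{|\cdot|^p,g_0}[U,\Omega']\le E_{|\cdot|^p,g_0}[u,\Omega]+2\theta$. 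For the remaining assertions: if $\calH^{n-1}(J_u)<\infty$ then $\calH^{n-1}(J_U\cap\Omega'\setminus\Omega)=\calH^{n-1}(\Phi^{-1}(J_u)\cap(\omega\setminus\overline\Omega))\le L^{n-1}\calH^{n-1}(J_u\cap\omega\cap\Omega)\le\theta$ and $\calH^{n-1}(J_U)\le\calH^{n-1}(J_u)+\theta+\calH^{n-1}(\partial\Omega')<\infty$; if $\nabla u=0$ $\calL^n$--a.e.\ on $\Omega$ then $\nabla U=(\nabla u\circ\Phi)\,\Dnabla\Phi=0$ $\calL^n$--a.e.\ on $\omega\setminus\overline\Omega$ and $\nabla U=0$ off $\Omega'$, so $\nabla U=0$ $\calL^n$--a.e.\ on $\Omega'$; and if $u\in W^{1,p}(\Omega;\R^m)$ then $J_u$ is $\calH^{n-1}$--null, hence so is $J_{u\circ\Phi}$, and by the trace identity $U$ has no jump inside $\Omega'$, i.e.\ $U\in W^{1,p}(\Omega';\R^m)$.
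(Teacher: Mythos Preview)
Your proof is correct and follows essentially the same route as the paper: extend $u$ across $\partial\Omega$ via the bilipschitz swap map of Theorem~\ref{theolipschitzset}, then shrink the collar $\omega$ so that the reflected energy contributions are below $\theta$. The only substantive difference is that where the paper simply invokes \cite[Corollary~3.89]{AFP} together with $\Phi|_{\partial\Omega}=\Id$ to conclude $|DU|(\partial\Omega)=0$, you spell out explicitly why the one-sided traces of $u$ and $u\circ\Phi$ coincide $\calH^{n-1}$-a.e.\ on $\partial\Omega$, via the averaged-limit characterization and the bilipschitz change of variables; this is a welcome clarification of a step the paper leaves implicit.
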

\begin{proof} 
We consider the open set $\omega$ and the bilipschitz map $\Phi$ provided by Theorem \ref{theolipschitzset}.
Thus, \cite[Theorem 3.16]{AFP} yields that $u\circ\Phi^{-1}\in SBV(\omega\setminus\overline{\Omega};\R^m)$, 
with 
\[
 |\operatorname{Lip}(\Phi)|^{1-n}\Phi_\#|D(u|_{\Omega\cap\omega})|\leq
|D(u\circ\Phi^{-1})|\leq |\operatorname{Lip}(\Phi^{-1})|^{n-1}\Phi_\#|D(u|_{\Omega\cap\omega})|\,,
\] 
where $\Phi_\#$ denotes the push forward of measures. In particular, by the Coarea formula
(cf. \cite[Theorem 2.93]{AFP}) we conclude that 
\begin{equation}\label{e:Psi U 0}
\int_{\omega\setminus\overline{\Omega}}|\nabla(u\circ\Phi^{-1})|^p\,dx\leq
C\int_{\Omega\cap\omega}|\nabla u|^p\,dx\,,
\end{equation}
and by the Coarea formula between rectifiable sets (cf. \cite[Theorem 3.2.22]{Federer69})
\begin{equation}\label{e:g0 U 0}
\int_{J_{u\circ\Phi^{-1}}}g_0(|[u\circ\Phi^{-1}]|)d\calH^{n-1}\leq C\int_{J_u\cap(\Omega\cap\omega)}g_0(|[u]|)d\calH^{n-1}\,,
\end{equation}
for a constant $C$ depending only on $n$ and $\operatorname{Lip}(\Phi^{-1})$.

Having fixed $\theta>0$, up to restricting $\omega$, we may assume that both right-hand sides of \eqref{e:Psi U 0}
and \eqref{e:g0 U 0} are actually less than or equal to $\theta$, and in addition that $\omega$ is Lipschitz with
$|\omega\setminus\overline\Omega|\leq\theta$. 

Then, to conclude, set $\Omega':=\Omega\cup\omega$ and
\begin{equation}
U(x):=\begin{cases}
u(x) &  x\in\Omega, \cr
u(\Phi^{-1}(x)) &  x\in\omega\setminus\overline{\Omega},\cr
0 & x\in\R^n\setminus\overline{\Omega'}\,.
\end{cases}
\end{equation}
By construction $U=u$ on $\Omega$. Moreover, recalling that $\Phi|_{\partial\Omega}$ is the identity and that $\omega$ 
is Lipschitz, \cite[Corollary~3.89]{AFP} implies that $U\in SBV(\R^n;\R^m)$, $|DU|(\partial\Omega)=0$, and 
moreover that
$DU\res\Omega'=Du\LL\Omega+D(u\circ\Phi^{-1})\LL (\omega\setminus\overline{\Omega})$. Finally, estimates \eqref{e:Psi U} and \eqref{e:g0 U} readily follows
from \eqref{e:Psi U 0} and \eqref{e:g0 U 0}. 

In the case
$\calH^{n-1}(J_u)<\infty$ the additional estimate follows from the same proof,  using $1+g_0$ in place of $g_0$ in \eqref{e:g0 U 0}. Similarly, if either $\nabla u=0$ or $J_u=\emptyset$, the same property is immediately inherited by $U$ on $\Omega\cup\omega$.
\end{proof}

\subsection{Approximate regularity on an intermediate scale}\label{s:approxreg}

{Given a $SBV$ function satisfying the hypotheses of Theorem \ref{theodensityintro}, we} show that at an intermediate scale, denoted by $\delta$, the regular part of the gradient is uniformly approximately continuous, and the jump is approximately given by a fixed jump concentrated on a $C^1$ manifold.
Having fixed 
$g_0$ satisfying \Hgzerouno-\Hgzerodue, for every $u\in SBV(\Omega;\R^m)$, we introduce the notation
\begin{equation}\label{e:muj}
\muj:=
g_0(|[u]|)\calH^{n-1}\LL J_u\,.
\end{equation}
We remark that $\muj$ is a finite measure on $\Omega$ concentrated on a $\sigma$-finite set with respect to {$\calH^{n-1}\LL J_u$.}
The main result is the following. 
\newcommand{\Qlargez}{{Q^*_z}}
\begin{proposition}\label{propdeltafromtheta}
Let $\Omega\subseteq\R^n$ be open {and bounded},
{$p\in[1,\infty)$,}
and $g_0$ be satisfying \Hgzerouno-\Hgzerodue.
There is a constant $C$ depending on $p$, $n$ {and $m$}, such that
for every $u\in SBV(\Omega;\R^m)$ with $\muj(\Omega)<\infty$ and
{$\nabla u\in L^p(\Omega;\R^{m\times n})$, and for}
every $\theta>0$,
after fixing an orientation of $J_u$
there is $\delta\in(0,\theta]$ such that, setting 
$A_\delta:=\{z\in \Omega\cap \delta\Z^n: \dist(z,\partial\Omega{)}>{\delta\sqrt {n}}\}$
and
$\Qlargez:=z+(-\delta,\delta)^n$,
the following holds:
there are
$R:{A_\delta}\to\SO(n)$,  $s:{A_\delta}\to \R^m$,  $\eta:{A_\delta}\to\R^{m\times n}$, $\varphi:{A_\delta}\to {C^1_c}(\R^{n-1})$, and
{$x:{A_\delta}\to\R^n$}
such that, setting
\begin{equation}
 L_z:=x_z+R_z\{(y',\varphi_z(y')): y'\in\R^{n-1}\},
\end{equation}
one has $\|D\varphi_z\|_{L^\infty}\le \theta$ and
\begin{equation}
\begin{split}\label{e:stima deltafromtheta}
\sum_{z\in {A_\delta}} &\int_{\Qlargez}|\nabla u-\eta_z|^p dx +
\sum_{z\in {A_\delta}}
\int_{\Qlargez\cap J_u\setminus L_z} g_0(|[u]|) d\calH^{n-1}\\
&{+\sum_{z\in {A_\delta}}\int_{\Qlargez\cap L_z}
g_0(|[u]|)|\nu_u-R_ze_n| d\calH^{n-1}}\\
&+\sum_{z\in {A_\delta}}\int_{\Qlargez\cap L_z}
g_0(|[u]-s_z|) d\calH^{n-1}
\le C\theta{(1+\mu_u(\Omega)+|\Omega|)}\,.
\end{split}
\end{equation}
If $\calH^{n-1}(J_u)<\infty$ then additionally 
\begin{equation}\label{eqfinitejump}
\sum_{z\in {A_\delta}}\calH^{n-1}(\Qlargez\cap (J_u{\triangle} L_z))\le C \theta.
\end{equation}
\end{proposition}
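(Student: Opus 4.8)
The plan is to combine a Lebesgue--point/density argument for $\nabla u$ with the structure theory of $(n-1)$--rectifiable sets and Lusin--Egorov type uniformization for the jump part, so that after choosing $\delta$ small, on each cube $\Qlargez$ the gradient is nearly constant and the part of $J_u$ weighted by $g_0(|[u]|)$ lies on a single nearly flat $C^1$ graph with nearly constant jump vector and normal. For the gradient term I would take $\eta_z:=\frac1{|\Qlargez|}\int_{\Qlargez}\nabla u\,\dx$. Since the cubes $\{\Qlargez:z\in A_\delta\}$ lie in $\Omega$ and have overlap bounded by $2^n$, approximating $\nabla u$ in $L^p(\Omega)$ by a continuous map and using Jensen's inequality together with uniform continuity on a compact neighbourhood of $\overline\Omega$ gives $\sum_{z\in A_\delta}\int_{\Qlargez}|\nabla u-\eta_z|^p\,\dx\to0$ as $\delta\to0$; in particular this contribution is $\le\theta$ for $\delta$ small, and $\nabla u=0$ a.e.\ or $u\in W^{1,p}$ is inherited trivially.

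For the jump, note that since $g_0^{-1}(0)=\{0\}$ and $g_0$ is nondecreasing, $\calH^{n-1}(\{|[u]|\ge1/k\}\cap J_u)\le\mu_u(\Omega)/g_0(1/k)<\infty$; fixing $k$ with $\mu_u(\{0<|[u]|<1/k\}\cap J_u)<\theta$ and setting $\Sigma:=\{|[u]|\ge1/k\}\cap J_u$ (or $\Sigma:=J_u$ when $\calH^{n-1}(J_u)<\infty$), the measure $\calH^{n-1}\LL\Sigma$ is finite and $\mu_u\LL\Sigma$ is absolutely continuous with respect to it. By Lusin's theorem I pass to $\Sigma'\subseteq\Sigma$ on which $[u]$ and $\nu_u$ are continuous, with $\calH^{n-1}(\Sigma\setminus\Sigma')$, hence $\mu_u(\Sigma\setminus\Sigma')$, as small as desired; by the structure theorem $\Sigma'$ is covered, up to a set of small $\calH^{n-1}$-- (hence $\mu_u$--)measure, by finitely many pairwise disjoint compact sets $K_1,\dots,K_N$ with $K_i$ contained in a $C^1$ hypersurface $M_i$ and $\nu_u=\nu_{M_i}$ on $K_i$ (after reorienting and splitting the $M_i$). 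Compactness and disjointness give $d:=\min_{i\ne j}\dist(K_i,K_j)>0$; the $C^1$ regularity of $M_i$ near $K_i$ provides $r_0>0$ so that near every point of $K_i$ the surface $M_i$ is a $\theta$--flat $C^1$ graph over its tangent plane at scales $\le r_0$ with gradient of modulus of continuity $\le\theta$ there; and Egorov's theorem yields a subset $G\subseteq\bigcup_iK_i$ of almost full $\calH^{n-1}$-- and $\mu_u$--measure, and a scale $r_1>0$, along which $\calH^{n-1}((M_i\setminus K_i)\cap B_r(q))=o(r^{n-1})$ and $\calH^{n-1}(J_u\cap B_r(q))\le Cr^{n-1}$ hold uniformly for $q\in G$, $r<r_1$.

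I then fix $\delta\in(0,\theta]$ smaller than $d/(4\sqrt n)$, $r_0/(4\sqrt n)$, $r_1/(2\sqrt n)$, small enough that the gradient term is $\le\theta$ and that the $\theta$--dependent Egorov quantities at scale $2\delta\sqrt n$ are below the chosen thresholds. Then each $\Qlargez$ meets at most one $K_i$. If $\Qlargez$ meets $K_{i(z)}$, I pick $p_z\in\Qlargez\cap K_{i(z)}$, let $R_z\in\SO(n)$ map $e_n$ to the $\nu_u$--oriented $\nu_{M_{i(z)}}(p_z)$ and $\Span\{e_1,\dots,e_{n-1}\}$ to $T_{p_z}M_{i(z)}$, set $x_z:=p_z$, let $\varphi_z\in C^1_c(\R^{n-1})$ coincide on the projection of $B_{2\delta\sqrt n}(p_z)$ with the local graph function of $M_{i(z)}$ and be cut off to $0$ slightly outside (so $\|D\varphi_z\|_\infty\le\theta$ and $L_z\cap\Qlargez=M_{i(z)}\cap\Qlargez$), and put $s_z:=[u](p_z)$ if $p_z\in G$, else $s_z:=0$; if $\Qlargez$ meets no $K_i$, I set $R_z:=\Id$, $\varphi_z:=0$, $s_z:=0$ and choose $x_z$ so that the hyperplane $L_z$ lies at positive distance from $\Qlargez$, i.e.\ $L_z\cap\Qlargez=\emptyset$. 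The verification then goes cube by cube: for a cube meeting $K_{i(z)}$ the set $\Qlargez\cap J_u\setminus L_z$ is contained in $(J_u\setminus\Sigma')\cup(\Sigma'\setminus\bigcup_iK_i)$ (the dominant piece $K_{i(z)}\cap\Qlargez$ lies in $M_{i(z)}\cap\Qlargez=L_z\cap\Qlargez$, and no other $K_j$ meets $\Qlargez$), so the second term sums to $\le C\theta$; on $K_{i(z)}\cap\Qlargez\subseteq L_z$ uniform continuity of $[u]$ (compactness of $K_{i(z)}$) and of $\nu_{M_{i(z)}}$ make $g_0(|[u]-s_z|)$ and $|\nu_u-R_ze_n|$ arbitrarily small, bounding the third and fourth terms there by $C\theta(1+\mu_u(\Omega))$; the remaining part of $L_z\cap\Qlargez\cap J_u$ lies again in the small sets; and $L_z\cap\Qlargez\setminus J_u$ has $\calH^{n-1}((M_{i(z)}\setminus K_{i(z)})\cap\Qlargez)=o(\delta^{n-1})$ for $p_z\in G$, which compared with $\mu_u(K_{i(z)}\cap\Qlargez)$ via $[u]\approx s_z$ and density one yields $\sum_zg_0(|s_z|)\calH^{n-1}(L_z\cap\Qlargez\setminus J_u)=o(1)$, while the $o(\delta^{n-1})$--many cubes with $p_z\notin G$ carry $s_z=0$ and contribute only through $\mu_u$ of a vanishing neighbourhood of $\bigcup_iK_i\setminus G$. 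Cubes meeting no $K_i$ have $L_z\cap\Qlargez=\emptyset$, so only the second term survives, equal to $\mu_u(\Qlargez)$, and their union meets $J_u$ only inside $J_u\setminus\bigcup_iK_i$, of $\mu_u$--measure $<C\theta$. Finally \eqref{eqfinitejump} follows from the same scheme with $\Sigma=J_u$, the weights $g_0$ replaced by $1$, and the bad cubes handled through $\calH^{n-1}(J_u\cap\Qlargez)\le C\delta^{n-1}$ and the bound on their number.

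The main obstacle is exactly the bookkeeping of the surface terms: (i) ensuring a small cube meets only one piece of the $C^1$ decomposition, which is why the $K_i$ must be taken compact and pairwise disjoint (hence uniformly separated); (ii) controlling the portion of the interpolating graph $L_z$ that leaves $J_u$, handled by the density--one property of $K_i$ inside $M_i$ through Egorov and by pushing $L_z$ away from cubes with no good piece; and (iii) obtaining all bounds with the correct weight ($g_0(|[u]|)$, respectively $\calH^{n-1}$ in the finite case) although $\calH^{n-1}(J_u)$ may be infinite, handled by the $\sigma$--finite splitting of $J_u$ according to the size of $|[u]|$ and the absolute continuity of the finite measure $\mu_u$ on each finite--measure piece.
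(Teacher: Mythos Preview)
Your approach is correct in spirit and yields the result, but it is genuinely different from the paper's. The paper first isolates a \emph{pointwise} statement (Lemma~\ref{lemmaBV}): for $\calH^{n-1}$-a.e.\ $x\in J_u$ there are $R_x,s_x,\varphi_x$ such that the error measure $\hat m_x$ built from the four surface integrands satisfies $\hat m_x(B_r(x))/\mu_u(B_r(x))\to0$. It then runs a single Egorov-type argument on this \emph{ratio}: the set $E^{k,\theta}$ of points where the ratio (or $\|D\hat\varphi_x\|$) is still large at scale $1/k$ has small $\mu_u$-measure for $k$ large, and for each cube one either finds a point $x_z\notin E^{k_\theta,\theta}$ and inherits its data, or the cube lies entirely in $E^{k_\theta,\theta}$ and one pushes $L_z$ away. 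Your route instead uses the rectifiability structure theorem globally: truncate $J_u$ to a finite-measure piece, apply Lusin to get continuity of $[u],\nu_u$, cover by finitely many disjoint compact $K_i\subset M_i$, and then use compactness (positive mutual distance) plus an Egorov step on the densities $r^{1-n}\calH^{n-1}((M_i\setminus K_i)\cap B_r)$ to control what happens cube by cube. The paper's approach is slicker in that all four surface terms are packaged into a single measure $\hat m_x$ and a single smallness condition; your approach is more hands-on and closer to standard textbook tools, but requires separate bookkeeping for cubes meeting no $K_i$, cubes with $p_z\in G$, and cubes with $p_z\notin G$.

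One point in your sketch is under-specified and worth tightening: for cubes with $p_z\notin G$ you keep $L_z=M_{i(z)}\cap\Qlargez$ and set $s_z=0$. This is fine for the main estimate \eqref{e:stima deltafromtheta} (the extra contribution is $\mu_u$ of a set contained in $(K_i\setminus G)\cup(J_u\setminus\bigcup_jK_j)$, as you note), but for \eqref{eqfinitejump} you then need $\sum_{p_z\notin G}\calH^{n-1}((M_i\setminus K_i)\cap\Qlargez)$ to be small, and your appeal to ``the bound on their number'' is not justified: a set $K_i\setminus G$ of small $\calH^{n-1}$-measure can meet many $\delta$-cubes. The clean fix is to do for these cubes exactly what you do for cubes meeting no $K_i$: push $L_z$ away from $\Qlargez$. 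Then $\Qlargez\cap(J_u\triangle L_z)=\Qlargez\cap J_u\subseteq (K_i\setminus G)\cup(J_u\setminus\bigcup_jK_j)$ on such cubes, and the sum is controlled by $2^n\calH^{n-1}(K_i\setminus G)+2^n\calH^{n-1}(J_u\setminus\bigcup_jK_j)$. This is essentially what the paper does with its set $F$ of ``bad'' cubes.
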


Before proving it we introduce a {preliminary} pointwise result {for the jump part of the energy}. 
\begin{lemma}\label{lemmaBV}
 Let $\Omega\subseteq\R^n$ {be} open, and {$g_0$} be satisfying \Hgzerouno-\Hgzerodue.
Let  $u\in SBV(\Omega;\R^m)$ with $\muj(\Omega)<\infty$. Then, for $\calH^{n-1}$-almost every $x\in J_u$ there are 
$R_x\in\SO(n)$, $s_x\in\R^m\setminus\{0\}$, 
 $\varphi_x\in C^1(\R^{n-1})$ such that $\varphi_x(0)=0$, $D\varphi_x(0)=0$, 
 and, letting $L_x:=x+R_x\{(y',\varphi_x(y')): y'\in\R^{n-1}\}$,
\begin{equation}\label{eqlemmaBVc}
\begin{split}
  \lim_{r\to0} \frac{1}{\muj(B_r{(x)})} &\left[ \int_{B_r(x)\cap J_u\setminus L_x} {g_0(|[u]|)}
d\calH^{n-1}\right.\\
&\left.+\int_{B_r(x)\cap L_x}
\big(g_0(|[u]-s_x|)+
{g_0(|[u]|)|\nu_u-R_xe_n|}\big)\, d\calH^{n-1}
\right]=0
.\end{split}
\end{equation}
If $\calH^{n-1}(J_u)<\infty$, then additionally
\begin{equation}\label{lemmaBVfin}
  \lim_{r\to0} \frac{\calH^{n-1}(B_r(x)\cap (J_u\triangle L_x))}{\muj(B_r{(x)})}=0.
\end{equation}
\end{lemma}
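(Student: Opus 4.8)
The plan is to exploit the rectifiability of $J_u$ together with the differentiation theory for the measure $\muj$. The starting point is the fact that $J_u$ is countably $(n-1)$-rectifiable, so there exists a countable family $\{M_i\}_{i\in\N}$ of $C^1$ hypersurfaces such that $\calH^{n-1}(J_u\setminus\bigcup_i M_i)=0$. By standard density/approximate-tangent-plane results (see \cite[Theorem~2.83, Theorem~2.84]{AFP}), for $\calH^{n-1}$-a.e.\ $x\in J_u$ there is exactly one index $i=i(x)$ with $x\in M_{i(x)}$ in the sense of density one, the approximate tangent plane to $J_u$ at $x$ coincides with $T_xM_{i(x)}=\nu_u(x)^\perp$, and
\[
\lim_{r\to0}\frac{\calH^{n-1}\big(B_r(x)\cap(J_u\triangle M_{i(x)})\big)}{r^{n-1}}=0.
\]
For such $x$ I would define $R_x\in\SO(n)$ to be any rotation with $R_xe_n=\nu_u(x)$, write $M_{i(x)}$ locally as a $C^1$ graph over $T_xM_{i(x)}$ in these rotated coordinates, i.e.\ produce $\varphi_x\in C^1(\R^{n-1})$ with $\varphi_x(0)=0$ and $D\varphi_x(0)=0$, and set $L_x:=x+R_x\{(y',\varphi_x(y')):y'\in\R^{n-1}\}$ — this $L_x$ agrees with $M_{i(x)}$ near $x$, so the same negligibility of the symmetric difference holds with $L_x$ in place of $M_{i(x)}$.

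Next I would bring in the measure $\muj=g_0(|[u]|)\calH^{n-1}\LL J_u$. The amplitude $[u]$ is a Borel function on $J_u$, and $x\mapsto\nu_u(x)$, $x\mapsto[u](x)$ are $\calH^{n-1}\LL J_u$-measurable. Since $\muj$ is a finite measure on $\Omega$, the Besicovitch differentiation theorem applies: for $\muj$-a.e.\ $x$ (hence for $\calH^{n-1}$-a.e.\ $x\in J_u$ with $[u](x)\ne0$, which is $\calH^{n-1}$-a.e.\ $x\in J_u$), $x$ is a Lebesgue point of the $\muj$-integrable functions $[u]$ and $\nu_u$, so that
\[
\lim_{r\to0}\frac{1}{\muj(B_r(x))}\int_{B_r(x)\cap J_u}\big(\big|[u]-[u](x)\big|+\big|\nu_u-\nu_u(x)\big|\big)\,d\muj=0.
\]
I then set $s_x:=[u](x)\in\R^m\setminus\{0\}$. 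Using the bound \eqref{e:g0 growth}, $g_0(t)\le\lambda+C_\lambda t$, one controls $g_0(|[u]-s_x|)\le\lambda+C_\lambda|[u]-s_x|$, and since $\muj=g_0(|[u]|)\calH^{n-1}\LL J_u$, integrating over $B_r(x)\cap L_x\subseteq B_r(x)\cap J_u$ (up to the negligible symmetric difference) gives
\[
\frac{1}{\muj(B_r(x))}\int_{B_r(x)\cap L_x}g_0(|[u]-s_x|)\,d\calH^{n-1}
\le\lambda\frac{\calH^{n-1}(B_r(x)\cap L_x)}{\muj(B_r(x))}+\frac{C_\lambda}{\muj(B_r(x))}\int_{B_r(x)\cap J_u}|[u]-s_x|\,d\calH^{n-1};
\]
here I would need to first dispose of the $\calH^{n-1}$ factor (see the next paragraph) and of the difference between the $\calH^{n-1}$ and $\muj$ integrals of $|[u]-s_x|$, which is handled by the same Lebesgue-point property once we pass to $\muj$. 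The term $g_0(|[u]|)|\nu_u-R_xe_n|=|\nu_u-\nu_u(x)|\,d\muj/d\calH^{n-1}$ on $L_x$ is directly the $\muj$-Lebesgue point estimate for $\nu_u$, and the term on $J_u\setminus L_x$ is bounded by $\muj(B_r(x)\cap(J_u\triangle L_x))$.

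The main obstacle — and the one point requiring genuine care — is that all the natural "small" quantities above are normalized by $\muj(B_r(x))$, whereas the rectifiability estimates naturally produce smallness relative to $r^{n-1}$ (or to $\calH^{n-1}(B_r(x)\cap J_u)$). One therefore needs a lower density bound of the form $\liminf_{r\to0}\muj(B_r(x))/r^{n-1}>0$ for $\calH^{n-1}$-a.e.\ $x\in J_u$, together with an upper bound $\limsup_{r\to0}\calH^{n-1}(B_r(x)\cap J_u)/r^{n-1}<\infty$; the latter is the standard upper density bound for rectifiable sets, equal to $\omega_{n-1}$ a.e., and the former follows because $\calH^{n-1}\LL J_u$ has density $1$ at a.e.\ point of $J_u$ and $g_0(|[u]|)>0$ there, so by the $\muj$-Lebesgue point property $\muj(B_r(x))\sim g_0(|[u](x)|)\,\omega_{n-1}r^{n-1}$ as $r\to0$. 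Combining these comparability estimates, every occurrence of $\calH^{n-1}(B_r(x)\cap(J_u\triangle L_x))/\muj(B_r(x))$ is, up to a fixed constant, bounded by $\calH^{n-1}(B_r(x)\cap(J_u\triangle L_x))/r^{n-1}\to0$, and every $\muj$-integral estimate is already in the right form. Finally, sending $\lambda\downarrow0$ in the displayed inequality for the $g_0(|[u]-s_x|)$ term yields \eqref{eqlemmaBVc}; and when $\calH^{n-1}(J_u)<\infty$, \eqref{lemmaBVfin} is exactly the rectifiability estimate for $J_u\triangle L_x$ divided by $\muj(B_r(x))\gtrsim r^{n-1}$, which is finite since the numerator is $o(r^{n-1})$.
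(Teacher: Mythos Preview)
Your overall strategy---localize to a $C^1$ surface through $x$, set $s_x=[u](x)$, and use $g_0(t)\le\lambda+C_\lambda t$ to reduce the $g_0(|[u]-s_x|)$ term to an $L^1$ Lebesgue-point estimate---matches the paper's. But two of your ``standard'' claims fail precisely in the regime $\calH^{n-1}(J_u)=\infty$ that the lemma is designed for, and this breaks the argument as written.

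First, the assertion $\calH^{n-1}\big(B_r(x)\cap(J_u\triangle M_{i(x)})\big)/r^{n-1}\to0$ is not true in general: the half $M_{i(x)}\setminus J_u$ is fine, but $\calH^{n-1}(B_r(x)\cap J_u\setminus M_{i(x)})$ can be infinite for every $r>0$ when the jump set has infinite measure (think of countably many nearby $C^1$ pieces accumulating at $x$, each carrying a very small jump so that $\muj$ stays finite). The same objection kills your ``upper density bound $\limsup \calH^{n-1}(B_r(x)\cap J_u)/r^{n-1}<\infty$''. Second, the Lebesgue-point step is miswired: $[u]$ need not be $\muj$-integrable (e.g.\ if $g_0(t)=t$ this would require $|[u]|^2\in L^1(\calH^{n-1}\LL J_u)$), and even when it is, the $\muj$-Lebesgue point gives $\int|[u]-s_x|\,g_0(|[u]|)\,d\calH^{n-1}$ small, not $\int|[u]-s_x|\,d\calH^{n-1}$. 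In your displayed inequality you also enlarge the domain from $L_x$ to $J_u$, which makes the integral $\int_{B_r\cap J_u}|[u]-s_x|\,d\calH^{n-1}$ potentially infinite.

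The paper sidesteps all of this by decomposing $J_u$ into the level sets $A_j:=\{|[u]|\ge 2^{-j}\}$, which have \emph{finite} $\calH^{n-1}$ measure because $|[u]|\in L^1(\calH^{n-1}\LL J_u)$; then the full density/rectifiability machinery applies on each $A_j$, and Lebesgue differentiation of $[u]$ and $\nu_u$ is taken with respect to $\calH^{n-1}\LL A_j$, not $\muj$. Your approach can be repaired the same way: keep the integrals on $L_x\cap J_u$ (do not enlarge to $J_u$), and invoke Lebesgue points of $[u]$ with respect to the Radon measure $\calH^{n-1}\LL(J_u\cap M_{i(x)})$, which is locally finite since $M_{i(x)}$ is $C^1$. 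For the term on $J_u\setminus L_x$ your $\muj$-density argument is correct; the paper instead derives it from the $L_x$ estimate via subadditivity of $g_0$ and the exact density $\muj(B_r(x))/r^{n-1}\to\omega_{n-1}g_0(|s_x|)$.
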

\begin{proof}
We first observe that for $\calH^{n-1}$-almost every $x\in J_u$ 
{by \cite[Th.~2.83(i)]{AFP}}
we have
\begin{equation}\label{eqmuug0}
 \lim_{r\to0}\frac{\muj(B_r{(x)})}{\omega_{n-1}r^{n-1}}={g_0(|[u](x)|)}\neq 0,\end{equation}
therefore {to prove \eqref{eqlemmaBVc}} it suffices to show that
{for $\calH^{n-1}$-almost every $x\in J_u$
there is 
$L_x$ as stated such that, setting $s_x:=[u](x)$, one has 
\begin{equation}\label{eqconvgusx1}
   \lim_{r\to0}\frac{1}{r^{{n-1}}}
  \int_{B_r(x)\cap L_x} {g_0(|[u]-s_x|)}
d\calH^{n-1}  =0,
\end{equation}
\begin{equation}\label{eqlemrn}
  \lim_{r\to0} \frac{1}{r^{n-1}}  
  \int_{B_r(x)\cap J_u\setminus L_x} {g_0(|[u]|)}d\calH^{n-1}=0,
\end{equation}
{and
\begin{equation}\label{eqconvgusx1nu}
   \lim_{r\to0}\frac{1}{r^{{n-1}}}
  \int_{B_r(x)\cap L_x\cap J_u} |\nu_u-R_xe_n|
d\calH^{n-1}  =0.
\end{equation}}%
We first observe that \eqref{eqconvgusx1} implies \eqref{eqlemrn}. Indeed, subadditivity and monotonicity of $g_0$ imply
$g_0(|s_x|)\le g_0(|[u]|)+g_0(|[u]-s_x|)$ and therefore
\begin{equation}\begin{split}
&\mu_u(  B_r(x)\setminus L_x)
 =
\mu_u(  B_r(x))-\mu_u(B_r(x)\cap L_x)\\
\le& 
\mu_u(  B_r(x))-g_0(|s_x|)\calH^{n-1}(B_r(x)\cap L_x)
+ \int_{B_r(x)\cap L_x} {g_0(|[u]-s_x|)}
d\calH^{n-1}.
\end{split}
\end{equation}
We divide by $\omega_{n-1}r^{n-1}$ and take the $\limsup$ as $r\to0$ to obtain,
with \begin{equation}
\lim_{r\to0} \frac{\calH^{n-1}(B_r(x)\cap L_x)}{\omega_n r^{n-1}}=1,
         \end{equation}
 \eqref{eqmuug0} and \eqref{eqconvgusx1},
\begin{equation}
 \limsup_{r\to0} \frac{1}{\omega_{n-1}r^{n-1}}\mu_u(  B_r(x)\setminus L_x)
 =0,
\end{equation}
which is \eqref{eqlemrn}. Therefore it remains to prove
\eqref{eqconvgusx1}.}

 For any $j>0$ let $A_j:=\{x\in J_u:\, |[u](x)|\ge 2^{-j}\}$. As $u\in SBV(\Omega;\R^m)$, we have $\calH^{n-1}(A_j)<\infty$, and $A_j$ is {countably $(n-1)$-}rectifiable. Therefore for $\calH^{n-1}$-almost every $x\in A_j$ there are {$R_x^j$, $\varphi_x^j$}  as in the statement such that
 {the corresponding set $L_x^j$ obeys}
 \begin{equation}\label{eqajrectif}
  \lim_{r\to0}\frac{\calH^{n-1}(B_r(x)\cap (A_j{\triangle} {L_x^j}))}{r^{n-1}}=
  0
 \end{equation}
 {and $\nu_u(x)=R_x^je_n$.}
As $|[u]|\in L^1(\Omega;\calH^{n-1}\LL J_u)$, and $\calH^{n-1}(A_j)$ is finite,
for $\calH^{n-1}$-almost every $x\in A_j$ 
\begin{equation}\label{eqconvgusxa1bbb}
   \lim_{r\to0}\frac{1}{r^{n-1}}
  \int_{A_j\cap B_r(x)} |[u]-s_x|d\calH^{n-1}  =0
\end{equation}
{and similarly
\begin{equation}\label{eqconvgusxa1bbbnu}
   \lim_{r\to0}\frac{1}{r^{n-1}}
  \int_{A_j\cap B_r(x)} |\nu_u-\nu_u(x)|d\calH^{n-1}  =0.
\end{equation}
}We recall that for $x\in J_u$ we defined $s_x=[u](x)\ne0$.
We first show that \eqref{eqajrectif} 
and \eqref{eqconvgusxa1bbb} imply
\begin{equation}\label{eqconvgusxa1}
   \lim_{r\to0}\frac{1}{r^{n-1}}
  \int_{{L_x^j}\cap B_r(x)} |[u]-s_x|d\calH^{n-1}  =0
\end{equation}
{for {$\calH^{n-1}$-}almost every $x\in A_j$.} Indeed, we have
\begin{align*}
   \int_{{L_x^j}\cap B_r(x)} |[u]-s_x|d\calH^{n-1}&\leq
   \int_{A_j\cap B_r(x)} |[u]-s_x|d\calH^{n-1}\\ & + 
(|s_x|{+2^{-j})}\calH^{n-1}(({L_x^j} \setminus A_j)\cap B_r(x))
\end{align*}
and the conclusion then follows from \eqref{eqajrectif} and \eqref{eqconvgusxa1bbb}.

{We next show that} \eqref{eqconvgusxa1} implies that for $\calH^{n-1}$-almost every $x\in A_j$
\begin{equation}\label{eqconvgusx1j}
   \lim_{r\to0}\frac{1}{r^{{n-1}}}
  \int_{{L_x^j}\cap B_r(x)} {g_0(|[u]-s_x|)}
d\calH^{n-1}  =0.
\end{equation}
 Indeed, using \eqref{e:g0 growth}, namely that for every $\lambda>0$ there is $C_{\lambda}>0$ such that
 ${g_0}(t)\le \lambda + C_\lambda t$ for all $t\in [0,\infty)$, with
\eqref{eqconvgusxa1} we obtain 
 \begin{equation}
   {\limsup_{r\to0}}\frac{1}{r^{n-1}} \int_{B_r(x)\cap {L_x^j}} {g_0(|[u]-s_x|)}
d\calH^{n-1}
  \le \lambda \lim_{r\to0}\frac{\calH^{n-1}(B_r(x)\cap {L_x^j})}{r^{n-1}}
=\lambda\omega_{n-1}.
\end{equation}
 Since $\lambda$ was arbitrary this concludes the proof of \eqref{eqconvgusx1j}.
 
Let $N\subseteq J_u$ be an $\calH^{n-1}$-null set such that \eqref{eqconvgusx1j} holds for all $x\in J_u\setminus N$ and all $j$ such that $x\in A_j$. For any $x\in J_u\setminus N$ we define $L_x$ as $L_x^j$ for the smallest $j\in N$ such that $x\in A_j$.
This proves \eqref{eqconvgusx1}.
{Condition \eqref{eqconvgusx1nu} follows similarly from
\eqref{eqconvgusxa1bbbnu} using
$A_j\subseteq J_u$ and
\eqref{eqajrectif}.}

 Assume now that $\calH^{n-1}(J_u)<\infty$. Then we can replace 
 \eqref{eqajrectif} by
  \begin{equation}\label{eqajrectif2}
  \lim_{r\to0}\frac{\calH^{n-1}(B_r(x)\cap (J_u{\triangle} L_x))}{r^{n-1}}=
  \lim_{r\to0}\frac{\calH^{n-1}(B_r(x)\cap (A_j{\triangle} L_x))}{r^{n-1}}=
  0.
 \end{equation}
 The second equality leads as above to \eqref{eqlemmaBVc};
 from the first one, one immediately obtains
\eqref{lemmaBVfin} (using again \eqref{eqmuug0}).
 \end{proof}

\begin{proof}[Proof of Proposition~\ref{propdeltafromtheta}]
As $\nabla u\in L^p(\Omega;\R^{m\times n})$, there is $f\in C^0(\overline\Omega;\R^{m\times n})$ such that
$\|\nabla u-f\|_{L^p(\Omega)}^p\le\theta$. 
Let ${\delta}>0$ be such that $|f(x)-f(y)|^p\le \theta$ for all $ x,y\in\overline\Omega$ with $|x-y|\le { \delta\sqrt {n}}$.
For any 
$z\in {A_\delta}$ we set $\eta_z:=f(z)$ and obtain
\begin{equation}\label{e:stima gradiente Lp}
\sum_{z\in {A_\delta}} \int_{\Qlargez}|\nabla u-\eta_z|^p dx 
\le {2^{p-1}}
\sum_{z\in {A_\delta}} \int_{\Qlargez}(|\nabla u-f|^p +
|f-\eta_z|^p) dx 
\le {2^{p+n}}(1 + |\Omega|)\theta
\end{equation}
as any point $x\in\R^n$ belongs to at most $2^n$ of the cubes $\Qlargez$, $z\in  {A_\delta}$.
This treats the first term.

The jump terms are treated using Lemma~\ref{lemmaBV}.
For $\muj$-almost every $x\in \Omega$ there are $\hat R_x\in \SO(n)$, {$\hat s_x\in\R^m\setminus\{0\}$,} and $\hat \varphi_x\in C^1(\R^{n-1})$ as stated, we define $\hat R_x:=\Id$, {$\hat s_x:=0$}, and $\hat \varphi_x:=0$ on the others.
We recall that $\hat L_x=x+\hat R_x\{(y',\hat \varphi_x(y')): y'\in\R^{n-1}\}$, and  define, for any $x\in\Omega$, the measure
\begin{equation}\label{eqdefhatnux}
\begin{split}
\hat m_x:=&g_0(|[u]|)\calH^{n-1}\LL(J_u\setminus \hat L_x)
 + g_0(|[u]- {\hat s_x}|)\calH^{n-1}\LL \hat L_x\\
& {+g_0(|[u]|)\,|\nu_u-\hat R_xe_n|\,\calH^{n-1}\LL \hat L_x}
 \,.
 \end{split}
\end{equation}
By Lemma~\ref{lemmaBV}, for $\muj$-almost every $x\in \Omega$ 
\begin{equation}\label{eqnuxmuj}
 \lim_{r\to0} \frac{{\hat m_x}(B_r(x))}{\muj(B_r(x))}=0
\text{ and } 
\lim_{r\to0} \|{D}\hat\varphi_x\|_{L^\infty(B'_{{3r}})}=0
\end{equation}
{(we write $B'$ for balls in $\R^{n-1}$).}
We define, {for $k\in\N_{>0}$,}
\begin{equation}\begin{split}
 E^{k,\theta}:=&\{x\in \Omega: 
 \|D\hat \varphi_x\|_{L^\infty(B'_{{\frac 3k}})}>{\frac13\theta}\} \\
 &\cup \{x\in \Omega: 
 \exists r\in (0,\frac1k ] \text{ with } 
 \hat m_x({B_r(x)\cap\Omega})\ge\theta \muj({B_r(x)\cap\Omega})
 \}.
 \end{split}
\end{equation}
Obviously $E^{k',\theta}\subseteq E^{k,\theta}$ if $k<k'$. By \eqref{eqnuxmuj}, for $\muj$-almost every $x\in {\Omega}$ there is $k$ such that $x\not\in E^{k,\theta}$.  
Therefore
\begin{equation}
 \muj(\bigcap_{k\in\N} E^{k,\theta})=0.
\end{equation}
We select $k_\theta{>2/\theta}$ such that $\muj(E^{{k_\theta},\theta})\le\theta$, and assume that $\delta$ is such that $2\delta\sqrt n\le 1/{k_\theta}$. 

For {some} $(s, {x,} R, \varphi):{A_\delta}\to \R^m\times {\Omega\times} \SO(n)\times {C^1_c}(\R^{n-1})$ (still to be defined) and any $z\in {A_\delta}$ we intend to estimate an error measure defined in analogy to \eqref{eqdefhatnux}
by $S_z:={x_z+}R_{z}\{(y',\varphi_{z}(y')): y'\in\R^{n-1}\}$  and
\begin{equation}\label{eqdefmz}
 m_z:=g_0(|[u]|)\calH^{n-1}\LL(J_u\setminus  S_z)
 + \big[ g_0(|[u]- s_z|)
{ + g_0(|[u]|) |\nu_u-R_ze_n|
\big]} \calH^{n-1}\LL S_z,
\end{equation}
namely to prove
\[
\sum_{z\in {A_\delta}}m_z(\Qlargez)\leq C\theta{(1+\mu_u(\Omega))}\,,
\]
with a constant $C>0$ depending on $n$, $p$.

Let $F:=\{z\in {A_\delta}: \Qlargez\subseteq E^{k_\theta,\theta}\}$.
If $z\in F$, we set $s_z:=0$, {$x_z:=z+2\delta e_n$}, $R_z{:=}\Id$,
{$\varphi_z:=0$,} so that
{$S_z\cap \Qlargez=\emptyset$ and}
{$m_z\LL \Qlargez=\muj\LL \Qlargez$.} Therefore
\begin{equation}\label{e:stima muj F}
 \sum_{z\in F} m_z(\Qlargez)=
 \sum_{z\in F} \muj(\Qlargez){\le 2^n}
  \muj(\bigcup_{z\in F}\Qlargez)\le {2^n}\muj(E^{k_\theta,\theta})\le{2^n}\theta.
\end{equation}
Consider now
 $z\in {A_\delta}\setminus F$, and select $x_z\in \Qlargez\setminus E^{{k_\theta},\theta}$.
 We set
$s_z:=\hat s_{x_z}$, $R_z:=\hat R_{x_z}$, 
and 
\begin{equation}
S_z{:=\hat L_{x_z}}=x_z+R_z\{(y',\hat\varphi_{x_z}(y')): y'\in\R^{n-1}\}\,.
\end{equation}
Note that with this choice $m_z=\hat m_{x_z}$. Moreover,
as $x_z\in \Qlargez$ we have
{$\Qlargez\subset B_{2\sqrt n\delta}(x_z)\cap\Omega$}.
As $2\delta\sqrt n\le 1/{k_\theta}$, $x_z\not\in E^{k_\theta,\theta}$ implies 
$ \hat m_{{x_z}}({B_{2\delta\sqrt n}(x_z)\cap\Omega})<\theta \muj({B_{2\delta\sqrt n}(x_z)\cap\Omega})$, so that 
\begin{equation}
 \sum_{z\in {A_\delta}\setminus F} m_z(\Qlargez)\le \sum_{z\in {A_\delta}\setminus F} \theta\muj({B_{2\delta\sqrt n}(x_z)\cap\Omega} ).
\end{equation}
Each ball $B_{2\sqrt n\delta}(x_z)$ overlaps 
with a finite number {$C(n)$} of cubes with center in $\delta\mathbb{Z}^n$
and side {of length} $\delta$, which implies that they have finite overlap. Therefore
\begin{equation}\label{e:stima muj Adelta meno F}
 \sum_{z\in {A_\delta}\setminus F} m_z(\Qlargez)\le
 {C}\theta\muj(\Omega)
\,.
\end{equation}
Recall that $\hat \varphi_{x_z}$ satisfies $\|D\hat\varphi_{x_z}\|_{L^\infty(B'_{3/k_\theta})}\leq{\frac13}\theta$
{and $\hat\varphi_{x_z}(0)=0$. We fix $\alpha_z\in C^1_c(B'_{3/k_\theta};[0,1])$ such that
$\alpha_z=1$ on $B'_{1/k_\theta}$ and $\|D\alpha_z\|_{L^\infty}\le \frac23k_\theta$, and set $\varphi_z:=\alpha_z\hat\varphi_{x_z}$.
Then  $\varphi_z\in C^1_c(\R^{n-1})$ with $\|D\varphi_z\|_{L^\infty(\R^{n-1})}\le \|D\hat\varphi_{x_z}\|_{L^\infty(B'_{3/k_\theta})}
(\|\alpha_z\|_{L^\infty} +  \frac3{k_\theta}\|D\alpha_z\|_{L^\infty} )\le\theta$, and $\varphi_{z}=\hat\varphi_{x_z}$ on $B'_{1/k_\theta}(x_z)$.}

Combining this remark with the results in \eqref{e:stima gradiente Lp},
\eqref{e:stima muj F}, \eqref{e:stima muj Adelta meno F} gives the first assertion.

Assume now that additionally $\calH^{n-1}(J_u)<\infty$. We proceed in the same way, replacing the measure $\hat m_x$ defined in \eqref{eqdefhatnux}
by $\hat M_x:=\hat m_x+ \calH^{n-1}\LL (J_u {\triangle} \hat L_x)$ and $\mu_u$ by $\hat\mu_u:=(g_0(|[u]|)+1)\calH^{n-1}\LL J_u$. By \eqref{lemmaBVfin} in Lemma~\ref{lemmaBV} we obtain that \eqref{eqnuxmuj} holds with $\hat M_x$ in place of $\hat m_x$, so that we can define $E^{k,\theta}$ with $\hat M_x$ and $\hat\mu_u$. Similarly, we consider in place of
$m_z$ defined in \eqref{eqdefmz} the measure
$M_z:=m_z+\calH^{n-1}\LL (J_u {\triangle}  S_z)$. The rest of the proof is unchanged, replacing $m_z$ by $M_z$ and $\mu_u$ by $\hat\mu_u$.
\end{proof}

\section{Proof of the approximation theorem}\label{s:proof}

\subsection{Explicit construction on a single simplex}\label{s:onesimplex}
We show how to construct the piecewise affine approximation in a single simplex, assuming that the values at the vertices and the jumps on the sides are {given}.  On each edge we shall use a function of the form illustrated {on the right-hand side of} Figure~\ref{fig:trianglev}.
{For simplicity we deal here only with scalar functions, the construction will then be applied componentwise.}
 
We consider points  $A_1,\dots, A_{n+1}\in\R^n$ such that {their convex envelope,} the simplex 
$T:=\conv(\{A_1,\dots, A_{n+1}\})$, has positive measure. 
 {The basic construction is outlined in general for} values $u_1, \dots, u_{n+1}{\in \R}$ of the function on the vertices, and
 jumps 
  $s_{ij}{\in\R}$ on the (oriented) edges, with $s_{ij}=-s_{ji}$ (which obviously implies $s_{ii}=0$). We then define the average gradients on the edges
  $\xi_{ij}:=u_j-u_i -s_{ij}$. 
  The definition of $\xi$ implies that whenever $\{i,j,k\}\subseteq\{1,\dots, {n}\}$ then
 \begin{equation}\label{eqxisrn}
  \xi_{ij}+\xi_{jk}+\xi_{ki}+
  s_{ij}+s_{jk}+s_{ki}=0.
 \end{equation}
The compatibility conditions arising from longer paths are not independent, as each path can be written as a concatenation of triangles.
On the edge joining $A_i$ with $A_j$, we require our target function to take the form
{(see Figure~\ref{fig:trianglev})}
  \begin{equation}\begin{split}
 v(A_i+t(A_{j}-A_i))=&
 u_i+t \xi_{ij} + s_{ij}\chi_{t>1/2}\\
 =&u_i+t(u_j-u_i) + s_{ij}(\chi_{t>1/2}-t).\end{split}
\end{equation} 
\begin{figure}
 \begin{center}
\includegraphics[width=4.8cm]{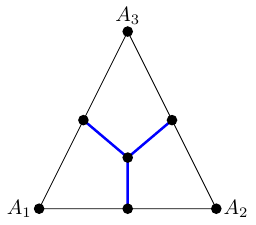} 
\hskip3mm
\includegraphics[width=4.8cm]{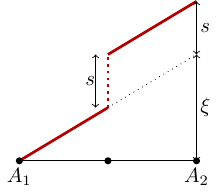} 
 \end{center}
\caption{Sketch of the construction in Proposition~\ref{propsimplex} {in the $2$d case}. 
Left: decomposition of the triangle. The blue lines represent the jump set of $v$.
Right: profile along a single edge. The parameter $s$ denotes the jump in the middle, the
parameter $\xi$ the rest of the height change, which corresponds to the uniform slope
in the rest.}
\label{fig:trianglev} 
\end{figure}

\newcommand\Rsk{\R_*}
\begin{proposition}\label{propsimplex}
Let $A_1,\dots, A_{n+1}\in\R^n$ {be} such that $T:=\conv(\{A_1,\dots, A_{n+1}\})$ 
has positive measure.
There is a decomposition of $T$ into $n+1$ {closed} polyhedra $T_1,\dots, T_{n+1}$ with disjoint interior such that the following holds.
Let $u_1, \dots, u_{n+1}\in \R$,
fix $s\in \Rsk:=\R^{(n+1)^2}_\skw$, 
and define
$\xi\in \Rsk$ by $\xi_{ij}+s_{ij}=u_j-u_i$.
Then there is $v:T \to\R$ affine {in each $T_j\setminus\bigcup_{i\ne j}\partial T_i$} and such that
\begin{equation}\label{e:stimeseparate}
 |\nabla v|\le {C\frac{\diam(T)^{n-1}}{|T|}} |\xi|\,, \hskip1cm |[v]|\le {3}|s|,
\end{equation}
\begin{equation}\label{e:lengthjump}
	\calH^{n-1}(J_v\cap T)\le \calH^{n-1}(\partial T),
\end{equation}	
and with
\begin{equation}\label{e:vedge}
 v(A_i+t(A_{j}-A_i))=
  u_i+t \xi_{ij} + s_{ij}\chi_{t>1/2}
\end{equation} 
for all ${i<j}\in\{1,\dots, n+1\}$, $t\in[0,1]$.
The constant {$C$} {depends only on $n$}. {The function $v$ depends linearly on $\{u_i\}\cup\{s_{ij}\}$.}

The function $v$ on a face {of $T$} does not depend on the opposing vertex. Precisely, for any $k$, if $x\in \conv(\{A_1, \dots, A_{n+1}\}\setminus\{A_k\})$  then
$v(x)$ depends only on $A_i$, $u_i$ for $i\ne k$ and on $s_{ij}$ for $i,j\ne k$.

\end{proposition}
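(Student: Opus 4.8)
The idea is to fix once and for all the canonical ``maximal barycentric coordinate'' decomposition of $T$, and then to choose the affine pieces of $v$ so that the gradient is controlled only by $\xi$ and the jump only by $s$. Let $\lambda_1,\dots,\lambda_{n+1}$ be the barycentric coordinates on $T$ (affine, nonnegative, summing to $1$, with $\lambda_k(A_l)=\delta_{kl}$), and set
\[
T_j:=\{x\in T:\ \lambda_j(x)\ge\lambda_k(x)\ \text{for all }k\}.
\]
These are $n+1$ closed convex polyhedra with pairwise disjoint interiors covering $T$; the facet shared by $T_i$ and $T_j$ is the internal wall $W_{ij}=T_i\cap T_j\subseteq\{\lambda_i=\lambda_j\}$, whose vertices are the barycenters $b_S=\frac{1}{|S|}\sum_{l\in S}A_l$ over subsets $S\supseteq\{i,j\}$. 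Along the edge $[A_i,A_j]$ one has $\lambda_i\ge\lambda_k$ exactly for $t\le1/2$, so that edge meets only $T_i$ and $T_j$ and is split by $W_{ij}$ precisely at its midpoint $M_{ij}=b_{\{i,j\}}$, matching the profile required in \eqref{e:vedge}.

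Next I would define, for each $j$, the unique affine function $v_j$ on $\R^n$ with $v_j(A_l)=u_l-s_{jl}$ for $l=1,\dots,n+1$ — equivalently, $v_j(A_j)=u_j$ and $v_j(M_{jl})=u_j+\tfrac12\xi_{jl}$ — and put $v:=v_j$ on $\mathrm{int}\,T_j$. The algebraic point is the identity $u_l-s_{jl}=u_j+\xi_{jl}$, which forces every difference $v_j(A_l)-v_j(A_m)$ to equal $\xi_{jl}-\xi_{jm}$, hence (since $\xi_{jj}=0$) $\nabla v_j\cdot(A_l-A_j)=\xi_{jl}$; thus $\nabla v_j$ is the image of $(\xi_{jl})_{l\ne j}$ under the inverse transpose of the matrix with columns $A_l-A_j$, and since that matrix has determinant $n!\,|T|$ while its cofactors are bounded by $(n-1)$-fold products of edge vectors of $T$, Cramer's rule yields $|\nabla v_j|\le C\,\diam(T)^{n-1}|T|^{-1}|\xi|$, i.e. the first half of \eqref{e:stimeseparate}. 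Dually, on $W_{ij}$ the jump of $v$ is the affine function $v_j-v_i$, whose values at the vertices $A_l$ are $s_{il}-s_{jl}$; writing any $x\in T$ in barycentric coordinates gives $|[v](x)|\le\sum_l\lambda_l(x)\,|s_{il}-s_{jl}|\le 2|s|\le 3|s|$ (the extremal terms $l\in\{i,j\}$ reducing to $\pm s_{ij}$), while at $M_{ij}$ the jump is exactly $s_{ij}$, again consistent with \eqref{e:vedge}. The linearity of $v$ in $\{u_i\}\cup\{s_{ij}\}$ is immediate from $v_j(A_l)=u_l-s_{jl}$, and the face–independence assertion holds because $v_j$ restricted to the facet opposite $A_k$ is the affine function on $\mathrm{aff}\{A_l:l\ne k\}$ with values $u_l-s_{jl}$ at $A_l$, $l\ne k$, which involves neither $A_k$, nor $u_k$, nor $s_{jk}$.

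Finally, since $v$ is affine on the interior of each $T_j$, one has $J_v\cap T\subseteq\bigcup_{i<j}W_{ij}$, so \eqref{e:lengthjump} reduces to the purely geometric inequality $\sum_{i<j}\calH^{n-1}(W_{ij})\le\calH^{n-1}(\partial T)$. I would prove this by induction on $n$, using that, inside the hyperplane $\{\lambda_i=\lambda_j\}$, the wall $W_{ij}$ is a cone with apex the barycenter $G$ of $T$ over its trace $W_{ij}\cap\partial T$, and that this trace is, up to $\calH^{n-2}$–null sets, the disjoint union over $k\ne i,j$ of the internal walls of the induced sub‑decomposition of the facet of $T$ opposite $A_k$; estimating the distances from $G$ to the affine hulls of those sub‑walls in terms of the heights of $T$ brings the bound down to the $(n-1)$–dimensional case, the base case $n=1$ (one midpoint versus two vertices) or $n=2$ (three midpoint–to–centroid segments of total length at most a third of the perimeter) being immediate. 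I expect this last step to be the most delicate bookkeeping; conceptually, however, the heart of the proposition is the choice of the interpolant $v_j$, which is exactly what makes the gradient estimate see only $\xi$ and the jump estimate see only $s$.
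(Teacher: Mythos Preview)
Your decomposition $T_j=\{\lambda_j=\max_l\lambda_l\}$ and your affine pieces $v_j(A_l)=u_l-s_{jl}$ are exactly what the paper uses (written there as $v_j(\sum_l\lambda_lA_l)=u_j+\sum_{l\ne j}\lambda_l\xi_{jl}$, which is the same formula). The gradient bound via the inverse of the edge matrix, the jump computation $(v_j-v_i)(x)=\sum_l\lambda_l(s_{il}-s_{jl})$, the edge profile, linearity, and face-independence all agree with the paper's argument.

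The one real difference is the geometric inequality $\sum_{i<j}\calH^{n-1}(W_{ij})\le\calH^{n-1}(\partial T)$. Your structural claims are correct: the vertices of $W_{ij}$ are the barycenters $b_S$ for $S\supseteq\{i,j\}$, $G$ lies in its relative interior, the boundary trace is $\bigcup_{k\ne i,j}W^{(k)}_{ij}$, and the cones from $G$ over the $W^{(k)}_{ij}$ tile $W_{ij}$. But the step you call ``delicate bookkeeping'' is a genuine obstacle. Writing
\[
\calH^{n-1}(W_{ij})=\frac{1}{n-1}\sum_{k\ne i,j}\calH^{n-2}\bigl(W^{(k)}_{ij}\bigr)\,d_k^{ij},
\qquad d_k^{ij}:=\dist\bigl(G,\ASpan(F_k)\cap\{\lambda_i=\lambda_j\}\bigr),
\]
the distances depend on $(i,j)$ and are bounded \emph{below}, not above, by $\dist(G,\ASpan(F_k))$; there is no evident uniform factor to pull out so as to invoke the inductive hypothesis on each $F_k$ separately (already for $n=2$ the per-face version of the needed inequality fails for thin triangles, though the sum over $k$ still holds). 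Completing the induction would require an additional idea you have not supplied.

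The paper avoids this with a short direct argument: it observes
\[
W_{ij}\subseteq\Bigl\{\textstyle\sum_{p=1}^{n}\lambda_p^*A_p^*:\ \lambda^*\ge 0,\ \sum_p\lambda_p^*=1,\ \lambda_1^*=\max_p\lambda_p^*\Bigr\},
\]
with $A_1^*=\tfrac12(A_i+A_j)$ and $\{A_2^*,\dots,A_n^*\}=\{A_l:l\ne i,j\}$, so that by symmetry $\calH^{n-1}(W_{ij})\le\tfrac1n\calH^{n-1}(\conv\{A_p^*\})$; then the cone-volume formula and convexity of the distance to $\ASpan\{A_l:l\ne i,j\}$ give $\calH^{n-1}(W_{ij})\le\tfrac{1}{2n}\bigl(\calH^{n-1}(F_i)+\calH^{n-1}(F_j)\bigr)$, and summing over $i\ne j$ finishes. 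This is cleaner than the inductive route and you may want to adopt it.
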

\begin{proof}
We first observe that each point $x\in T$ 
can be uniquely represented as $x=\sum_{i=1}^{n+1} \lambda_i A_i$
for some $\lambda\in\Lambda:=\{\lambda\in[0,1]^{n+1}: \sum_{i=1}^{n+1}\lambda_i=1\}$.
We define {the polyhedra $T_j$ by}
\begin{equation}\label{eqdefTj}
 T_j:=\{\sum_i\lambda_iA_i:
 {\lambda\in\Lambda},
 \lambda_j\ge \lambda_i\text{ for }i\ne j\},
\end{equation}
{see Figure~\ref{fig:trianglev}(left)}
(in the case that $T$ is regular, this amounts to the Voronoi decomposition of $T$).
We remark that the condition $\lambda_i\le \lambda_j=1-\sum_{k\ne j}\lambda_k$ for all {$i\ne j$} is equivalent to
\begin{equation}\label{e:lambdai}
2\lambda_i\leq1-\sum_{k\neq i,j}\lambda_k\quad\mbox{ for all }i\neq j\,,
\end{equation}
so that
\[
x\in T_j\Longleftrightarrow x=A_j+\sum_{i\neq j}\lambda_i(A_i-A_j)\quad \lambda\in\Lambda \mbox{ as in \eqref{e:lambdai}}\,.
\]
{We define} $v_j:T_j\to\R$ by
\[
v_j(x):=\hat v_j(\mathbb A_j^{-1}{(x-A_j)})\,,
\]
where $\mathbb A_j$ is the matrix with {columns} given by $A_i-A_j$ for $i\neq j$, and
{$\hat v_j:\R^n\to\R$ is defined by}
\begin{equation}\label{eqvor}
\hat v_j(\lambda_1,\ldots,\lambda_{j-1},\lambda_{j+1},
\ldots,
\lambda_{n+1}) :=u_j+\sum_{i\ne j} \lambda_i \xi_{ji}
\end{equation}
{so that $v_j(\sum_i\lambda_i A_i)=u_j+\sum_{i\ne j} \lambda_i\xi_{ji}$.}
We define $v$ by setting
\begin{equation}
 v:=v_j\text{ in } T_j {\setminus \bigcup_{i<j} T_i.}
\end{equation}
Obviously $A_j\in T_j$ and $v(A_j)=u_j$. {Further,
{for any $j$ the function}
$v$ is affine {in $T_j\setminus \bigcup_{i<j}\partial T_i$}, with
\[
\nabla v(x)=(\mathbb A_j^{-1})^T\nabla \hat v_j(\mathbb A_j^{-1}{(x-A_j)})=(\mathbb A_j^{-1})^T(\xi_{ji})_{i\neq j}
\]
{for all $x$ inside} $T_j$, and therefore
\[
|\nabla v|\leq\|\mathbb A_j^{-1}\|_{op}{|(\xi_{ji})_{i\neq j}|}\quad\mbox{ on } T_j
\]
from which we infer that 
\[
|\det\mathbb A_j||\nabla v|\leq\|{\mathrm{cof\,}}\mathbb A_j\|_{op}{|\xi|}\quad\mbox{ on } T_j\,.
\]
{By definition of $\mathbb A_j$, it holds $|\det\mathbb A_j|=n!|T|$. 
As {a} cofactor is a homogeneous polynomial of degree $n-1$, one obtains} $\|{\mathrm{cof\,}}\mathbb A_j\|_{op}\leq C\diam(T)^{n-1}$, for some dimensional constant $C<\infty$. {This proves the first bound in \eqref{e:stimeseparate}.}

To conclude that $v\in SBV(T)$, with the claimed estimates, we note that since by construction 
$v$ is affine on each $T_j$, it jumps only on the points}
$x=\sum_i \lambda_iA_i\in \partial T_j\cap \partial T_k$ for some $j\ne k$. Necessarily 
$\lambda_j=\lambda_k$, and the conditions
$\xi_{ij}+s_{ij}=u_j-u_i$, $\sum_i\lambda_i=1$ with $\lambda_i\in[0,1]$, the antisymmetry of $\xi$, and the compatibility condition in \eqref{eqxisrn}  imply that
\begin{equation}\begin{split}
 (v_j- v_k)(\sum_i \lambda_iA_i)
 &= u_j-u_k + \lambda_k\xi_{jk}-\lambda_j\xi_{kj} + 
 \sum_{i\not\in\{j,k\}} 
 \lambda_i(\xi_{ji}-\xi_{ki})\\
 &= \xi_{kj}+s_{kj} - (\lambda_k+\lambda_j)\xi_{kj}+ 
 \sum_{i\not\in\{j,k\}} 
 \lambda_i(\xi_{ji}+\xi_{ik})\\
 &= s_{kj} + 
 \sum_{i\not\in\{j,k\}} 
 \lambda_i(\xi_{kj}+\xi_{ji}+\xi_{ik})\\
 &= s_{kj} -
 \sum_{i\not\in\{j,k\}} 
 \lambda_i(s_{kj}+s_{ji}+s_{ik})\\
 &{= (\lambda_j+\lambda_k) s_{kj} -
 \sum_{i\not\in\{j,k\}} 
 \lambda_i(s_{ji}+s_{ik}).}
 \end{split}
\end{equation}
{Therefore}
 $v\in SBV(T)$ with $|[{v}]|\le {3}|s|$ 
and 
\begin{equation}\label{eqestcalhn1jv}
\calH^{n-1}(J_v)\leq\sum_{i\neq j}\calH^{n-1}(\partial T_i\cap\partial T_j)
\leq\calH^{n-1}(\partial T)\,.
\end{equation}
The last inequality is proven in
\eqref{eqsumTiTjDT} below.
{This concludes the proof of \eqref{e:stimeseparate} and \eqref{e:lengthjump}. Condition \eqref{e:vedge}
follows directly from the definition above.}

 By construction, it is clear that $v$ does not depend on the vertex $A_k$ on the opposing face $F_k$, since {on $F_k$ we have} $\lambda_k=0$ and $F_k\cap T_k=\emptyset$.

It remains to prove the geometric inequality that was used in the last step of \eqref{eqestcalhn1jv}.
By Fubini's theorem one easily checks the following: 
Consider a set $\alpha$ of $k+1$ points in $\R^n$, 
$0\le k<n$. Then for any $x\in\R^n$ one has
\begin{equation}\label{eqk1volume}
 \calH^{k+1}(\conv(\alpha\cup\{x\})) =\frac1{k+1} \calH^k(\conv(\alpha))  \cdot\dist(x,\ASpan(\alpha)),
\end{equation}
where $\ASpan(\alpha)$ is the smallest affine space that contains $\alpha$ (if $k=0$ then $\conv(\alpha)=\ASpan(\alpha)=\alpha$ and
$\calH^0(\conv(\alpha))=1$).

Fix now $i\ne j\in\{1,\dots, n+1\}$, 
and consider $\partial T_i\cap \partial T_j$.
Then
\begin{equation}\begin{split}
 \partial T_i\cap \partial T_j
 =&\{\sum_{p=1}^{n+1}\lambda_p A_p: \lambda\in\Lambda, \lambda_i=\lambda_j=\max_p \lambda_p\}\\
 =&\{2 \lambda_i \frac{A_i+A_j}{2}+\sum_{p\ne \{i,j\}}\lambda_p A_p: \lambda\in\Lambda, \lambda_i=\lambda_j=\max_p \lambda_p\}\\
 \subseteq&\{\sum_{p=1}^{n}\lambda^*_p A_p^*: \lambda^*\in\Lambda^*
, \lambda^*_1=\max_p \lambda^*_p 
 \},
 \end{split}
\end{equation}
where
$\Lambda^*:=\{\lambda^*\in[0,1]^{n}: \sum_{p=1}^{n}\lambda^*_p=1\}$,
$A_1^*:=\frac{A_i+A_j}{2}$
and  $\{A_p^*\}_{p=2,\dots, n}$ is any relabeling of the $n-1$ points in
$\alpha_{ij}:=\{A_1,\dots, A_{n+1}\}\setminus\{A_i,A_j\}$
{(the inclusion in the last step follows from the fact that $\lambda_1^*=2\lambda_i\ge{2} \lambda_p^*$ for all $p>1$).}
By symmetry, all $n$ sets $\{\lambda^*\in\Lambda^*: \lambda^*_i=\max_p \lambda^*_p\}$ 
have the same area, and as they are disjoint up to $\calH^{n-1}$-dimensional  null sets we obtain
\begin{equation}
\calH^{n-1}(\partial T_i\cap \partial T_j)\le \frac{1}{n} \calH^{n-1}\big(\conv\{A_p^*\}\big)
 =\frac{1}{n} \calH^{n-1}
\Big(  \conv\big(\alpha_{ij}\cup\{\frac{A_i+A_j}2\}\big)\Big)
 \end{equation}
so that
\eqref{eqk1volume} gives
\begin{equation}\label{eqhn1titja}
 \calH^{n-1}(\partial T_i\cap \partial T_j) 
\le  \frac1{n(n-1)} \calH^{n-2}(\conv(\alpha_{ij})) \cdot\dist\Big(\frac{A_i+A_j}2,\ASpan(\alpha_{ij})\Big).
\end{equation}
By convexity
\begin{equation}\label{eqhn1titjb}
\dist\Big(\frac{A_i+A_j}2,\ASpan(\alpha_{ij})\Big)
\le \frac12 \dist(A_i,\ASpan(\alpha_{ij}))
+\frac12\dist(A_j,\ASpan(\alpha_{ij}))\,.
\end{equation}
Let $F_i:=\conv(\{A_1,\dots, A_{n+1}\}\setminus\{A_i\})=\conv(\alpha_{ij}\cup\{A_j\})$ be the face opposite to {the} vertex $A_i$. By
\eqref{eqk1volume},
\begin{equation}\label{eqhn1tiF}
 \calH^{n-1}(F_i)=\frac1{n-1}\calH^{n-2}(\conv(\alpha_{ij}))\cdot\dist(A_j,\ASpan(\alpha_{ij})).
\end{equation}
Combining \eqref{eqhn1titja}, \eqref{eqhn1titjb} and \eqref{eqhn1tiF} gives
\begin{equation}
 \calH^{n-1}(\partial T_i\cap \partial T_j) 
 \le\frac1{2n} \calH^{n-1}(F_i)+\frac1{2n} \calH^{n-1}(F_j).
\end{equation}
We sum over all {pairs $(i,j)$ with} $i\ne j$ and obtain
\begin{equation}\label{eqsumTiTjDT}
\sum_{{i\neq j}} \calH^{n-1}(\partial T_i\cap \partial T_j) 
 \le\sum_{i=1}^{n+1} \sum_{j\ne i} \frac1n \calH^{n-1}(F_i)
=\sum_{i=1}^{n+1}\calH^{n-1}(F_i)
 =\calH^{n-1}(\partial T)
\end{equation}
which concludes the proof.
\end{proof}

\subsection{Projection on piecewise affine functions}\label{s:projection}
In this section, we use Proposition~\ref{propsimplex} to construct a good piecewise affine interpolation of any vectorial function $u\in SBV_\loc(\R^n;\R^m)$ over a suitable partition of $\R^n$ in simplexes. First, Lemma~\ref{lemmatriangRn} states the general properties of the chosen partition. Proposition~\ref{propsimplex} then can be applied {componentwise} in each simplex of a suitable shift of the partition. The resulting interpolation can be interpreted as a projection of $u$ over piecewise affine functions and enjoys good energy estimates, see Proposition~\ref{propproj}.

\begin{lemma}\label{lemmatriangRn}
 For any $n\ge 1$ there is a countable set of simplexes $\mathcal T_0\subseteq \calP(\R^n)$ such that, denoting by $\Ver(\tau_0)$ the set of vertices of $\tau_0\in \mathcal T_0$, one has: 
 \begin{enumerate}
  \item\label{enumgridvere} $\#\Ver(\tau_0)=n+1$; $|\tau_0|>0$ for all $\tau_0\in \mathcal T_0$;
  \item\label{enumgridintersect} 
 $\tau_0\cap \tau'_0=\conv(\Ver(\tau_0)\cap \Ver(\tau_0'))$, in particular 
$|\tau_0\cap \tau'_0|=0$ if $\tau_0\ne\tau'_0$;
 \item\label{enumgridcover} $\bigcup_{\tau_0\in \mathcal T_0} \tau_0=\R^n$;
 \item\label{enumgridZn} For any $\tau_0\in \mathcal T_0$ there is $z\in \Z^n$ such that 
$\Ver(\tau_0)\subseteq  \{z+\sum_i\lambda_ie_i:\, \lambda_i\in\{0,1\},\,i\in\{1,\ldots,n\}\}$,
with the $e_i$'s the canonical basis vectors;
 \item\label{lemmatriangRnper} If $\tau_0\in \mathcal T_0$, then $\tau_0+2e_i\in \mathcal T_0$, with $e_i$ {any} of the canonical basis vectors.
 \end{enumerate}
\end{lemma}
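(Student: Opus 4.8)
The plan is to take for $\mathcal T_0$ the classical Freudenthal--Kuhn triangulation of $\R^n$: for $z\in\Z^n$ and a permutation $\sigma$ of $\{1,\dots,n\}$ set
\[
\tau_{z,\sigma}:=\conv\bigl\{v^{z,\sigma}_0,\dots,v^{z,\sigma}_n\bigr\},\qquad v^{z,\sigma}_0:=z,\quad v^{z,\sigma}_j:=z+\sum_{k=1}^{j}e_{\sigma(k)}\ \ (1\le j\le n),
\]
and let $\mathcal T_0:=\{\tau_{z,\sigma}:z\in\Z^n,\ \sigma\text{ a permutation of }\{1,\dots,n\}\}$. Equivalently, $\tau_{z,\sigma}=z+\{x\in[0,1]^n: x_{\sigma(1)}\ge\dots\ge x_{\sigma(n)}\}$ is the translate by $z$ of one of the $n!$ ``order simplexes'' subdividing the unit cube. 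I would use freely the identity $v^{z,\sigma}_j=z+\mathbf 1_{\{\sigma(1),\dots,\sigma(j)\}}$ and the characterization $x\in\tau_{z,\sigma}\iff 1\ge (x-z)_{\sigma(1)}\ge\dots\ge (x-z)_{\sigma(n)}\ge 0$, both of which follow by writing the barycentric coordinates of $x$ with respect to the $v^{z,\sigma}_j$ as $\lambda_0=1-(x-z)_{\sigma(1)}$, $\lambda_j=(x-z)_{\sigma(j)}-(x-z)_{\sigma(j+1)}$ for $1\le j\le n-1$, $\lambda_n=(x-z)_{\sigma(n)}$, and checking by a telescoping sum that $\sum_j\lambda_jv^{z,\sigma}_j=x$ and $\sum_j\lambda_j=1$.

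Properties \ref{enumgridvere}, \ref{enumgridZn} and \ref{lemmatriangRnper} are then immediate: the differences $v^{z,\sigma}_j-v^{z,\sigma}_{j-1}=e_{\sigma(j)}$ form a basis, so $\#\Ver(\tau_{z,\sigma})=n+1$ and $|\tau_{z,\sigma}|=1/n!>0$; all vertices lie in $z+\{0,1\}^n$; and $\tau_{z,\sigma}+e_i=\tau_{z+e_i,\sigma}\in\mathcal T_0$, so $\mathcal T_0$ is in fact $\Z^n$-periodic, which gives \ref{lemmatriangRnper} a fortiori. For the covering property \ref{enumgridcover}, given $x\in\R^n$ write $x=z+y$ with $z\in\Z^n$, $y\in[0,1)^n$, choose $\sigma$ with $y_{\sigma(1)}\ge\dots\ge y_{\sigma(n)}$, and read off from the characterization that $x\in\tau_{z,\sigma}$.

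The one substantial point — and the step I expect to absorb essentially all the work — is the face-to-face property \ref{enumgridintersect}; note it contains in particular the stated consequence $|\tau_0\cap\tau_0'|=0$ for $\tau_0\ne\tau_0'$, since a common face of two distinct $n$-simplexes has dimension $<n$. The inclusion $\tau_0\cap\tau_0'\supseteq\conv(\Ver\tau_0\cap\Ver\tau_0')$ is trivial by convexity, so the content is the reverse one, which I would prove by induction on $n$ (the case $n=1$ being clear) around two facts. \emph{(a) Within a single unit cube:} $\tau_{z,\sigma}$ and $\tau_{z,\sigma'}$ meet exactly in $\conv$ of the vertices attached to the prefix sets common to $\sigma$ and $\sigma'$ — indeed, by the order characterization a point of the intersection has $x-z$ weakly ordered both by $\sigma$ and by $\sigma'$, which forces equalities among coordinates describing precisely the (chain of) common prefix sets, a chain that always contains $\emptyset$ and $\{1,\dots,n\}$; this is the order-complex statement for the braid (type $A$) reflection arrangement $\{x_i=x_j\}$ restricted to the cube. \emph{(b) Trace on a face:} if $G$ is a face of $z+[0,1]^n$ obtained by pinning some coordinates to $z_i$ or $z_i+1$, then $\{\tau\cap G:\tau\in\mathcal T_0,\ \dim(\tau\cap G)=\dim G\}$ is again a Freudenthal--Kuhn triangulation of the lower-dimensional cube $G$, depending only on $G$ and not on the cube it is a face of; in particular two unit cubes sharing a face $G$ induce the same triangulation of $G$. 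Combining: if $\tau_0,\tau_0'$ lie in unit cubes $Q,Q'$, then $\tau_0\cap\tau_0'\subseteq Q\cap Q'$, which is either empty (nothing to prove), all of $Q$ (then $\tau_0,\tau_0'\subseteq Q$ and (a) applies), or a proper common face $G$ of both; in the last case $\tau_0\cap G$ and $\tau_0'\cap G$ are faces of simplexes of the \emph{same} triangulation of $G$ by (b), so by the inductive hypothesis in dimension $\dim G<n$ they meet in a common face, whose vertices lie in $\Ver\tau_0\cap\Ver\tau_0'$. The bookkeeping of prefix sets in (a) and of the relabelling of coordinates in (b) is routine, but it is exactly where the care is required.
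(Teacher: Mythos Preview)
Your argument is correct. You take the direct Freudenthal--Kuhn triangulation of $\R^n$ (the same order-simplex subdivision in every unit cube), whereas the paper instead reflects the Freudenthal partition of $[0,1]^n$ across the coordinate hyperplanes to fill $[-1,1]^n$ and then extends with period $2e_i$. The paper's reflection trick makes the face-to-face property nearly automatic: a reflection fixes the reflecting hyperplane pointwise, so adjacent orthants of $[-1,1]^n$ obviously induce the same triangulation on their common face, and the same holds at the outer faces $x_i=\pm1$ since reflecting across $x_i=0$ acts trivially on the remaining coordinates. Your direct approach requires the inductive check that the Kuhn traces on opposite cube faces coincide (your point~(b)), which is exactly the work you flag, but it repays the effort by giving full $\Z^n$-periodicity rather than only $2\Z^n$. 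Either construction suffices for the lemma as stated; the paper's is shorter to justify, yours is the more standard object and yields the sharper periodicity.
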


We recall that $\conv(\emptyset)=\emptyset$.
{Condition~\ref{enumgridvere} and condition~\ref{enumgridintersect} with $\tau_0=\tau'_0$ imply that $\tau_0$ is a closed simplex.}
Condition~\ref{enumgridZn} implies that for all $\eps>0$, {the rescaled simplex} $\eps{\tau_0}$ has diameter at most  $\eps\sqrt n$, and together with condition~\ref{enumgridvere} that its volume is at
least $\eps^n/n!$ {(indeed, it is $1/n!$ times the determinant of a matrix with entries in 
$\{-\eps,0,\eps\}$).}
 The last two imply that this is a refinement of the natural subdivision of $\R^n$ into {unitary} cubes,  with period $[0,2]^n$.
\begin{proof}
This can be obtained taking any partition, as for example the Freudenthal partition, of $[0,1]^n$, reflecting this along the coordinate axes to obtain a partition of $[-1,1]^n$, and then extending periodically.
\end{proof}

{In the rest of this section we define 
for any $\eps>0$ and $\zeta\in B_\eps$}
a projection 
$\Pi_{\eps,\zeta}$ on the space of functions that are affine on each 
polyhedron {in a refinement of $\zeta+\eps\calT_0$. The projection will be used on functions $u\in SBV(\R^n;\R^m)$. As it depends on point values, we shall only obtain a well-defined result for values of the translation $\zeta$ outside a null set. The null set, {however,} depends on $u$. To avoid this, the precise definition is given not on equivalence classes but on functions from $\R^n$ to $\R^m$.
In order to understand the key properties, it is useful to consider first the action of $\Pi_{\eps,\zeta}$ on elements of $SBV(\R^n;\R^m)$ (or, equivalently,
$ SBV_\loc(\R^n;\R^m)$, since $\Pi_{\eps,\zeta}$  is local). 
}

Let $u\in SBV(\R^n;\R^m)$.  For any couple of vertices $a\ne b\in\Ver(\tau)$
{of a simplex $\tau\in\zeta+\eps \calT_0$}, we
consider the slice $u_a^{b-a}(t):=u(a+t(b-a))$.
For {$\calL^{2n}$}-almost every pair $(a,b)$ we have
$u_a^{b-a}\in SBV({\R};\R^m)$ with
\begin{equation}\label{eqslicebv}
 u(b)-u(a)=\int_0^1 (u_a^{b-a})'(t)dt +
 \sum_{t\in (0,1)\cap J_{u_a^{b-a}}}
 [u_a^{b-a}](t)
\end{equation}
with
\begin{equation*}
(u_a^{b-a})'(t)=\nabla u(a+t(b-a))(b-a)
\end{equation*}
and 
\begin{equation*}
[u_a^{b-a}](t)=[u](a+t(b-a))
\sgn(\nu_{a+t(b-a)}\cdot (b-a))
\end{equation*}
(see \cite[Sect.~3.11]{AFP} 
or \cite[Th.~4.1(a)]{Braides}).
{The parameters $s$ and $\xi$ entering the piecewise affine construction in Proposition~\ref{propsimplex} are then defined from the two components of \eqref{eqslicebv}. Precisely,}
we define the jump over an edge $[a,b]$ 
by 
\begin{equation}\label{e:jumpedge}
 s_{[a,b]}:=\sum_{t\in(0,1){\cap J_{u_a^{b-a}}}} [u](a+t(b-a))
 {\sgn(\nu_{a+t(b-a)}\cdot (b-a))}
\end{equation}
({setting} it to zero if the sum does not converge {or is not defined})
and correspondingly the integral of the absolutely continuous part of the gradient by
\begin{equation}\label{e:gradientedge}
\xi_{[a,b]}:=u(b)-u(a)-s_{[a,b]}.
\end{equation}
For {$\calL^{2n}$}-almost all pairs $(a,b)$
\begin{equation}\label{eqxiabintb}
 \xi_{[a,b]} = \int_0^1 \nabla u(a+t(b-a)) (b-a) dt.
\end{equation}
By monotonicity and subadditivity of $g_0$,
\begin{equation}\label{eqgsabsubadd}
 g_0({|s_{[a,b]}|}) \le\sum_{t\in(0,1){\cap J_{u_a^{b-a}}}} {g_0(|[u](a+t(b-a))|)}\,.
\end{equation}
Similarly, 
for {$\calL^{2n}$}-almost all {pairs $(a,b)$},
using  \eqref{eqxiabintb} and Jensen's inequality, \begin{equation}\label{eqxisegmentnablau}
  {\left|\frac{\xi_{[a,b]}}{|b-a|}\right|^p \le
 \int_0^1 |\nabla u(a+t(b-a))|^pdt.}
\end{equation}
In Proposition~\ref{propproj} we will turn both estimates \eqref{eqgsabsubadd} and \eqref{eqxisegmentnablau} into estimates relating the energy over shifts of
the segment, averaged over all possible shifts of size less than $\eps$,
and integrals over $J_u$ and $\Omega$, respectively.

\begin{figure}
 \begin{center}
  \includegraphics[width=6cm]{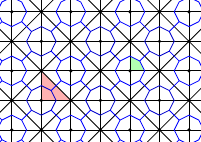}
 \end{center}
 \caption{Sketch of the construction for Proposition~\ref{propproj}. The dots mark the points on which $\Pi_{\eps,\zeta}{f}$ coincides with ${f}$, the black triangles (one of them is colored red) are {the elements of $\calT_{\eps,\zeta}$,} on which
 Proposition~\ref{propsimplex} is applied.
The blue {segments} are the (eventual) discontinuities introduced by the construction of Proposition~\ref{propsimplex} {and delimit the polygons which compose $\calT^*_{\eps,\zeta}$.}
The function $\Pi_{\eps,\zeta}u$
 is affine on the smaller polyhedra (one of them is colored green).
 \label{figpropproj}}
\end{figure}

\newcommand{\cpropjbounds}{{c_*}}
\begin{proposition}\label{propproj}
There is a locally finite subdivision of $\R^n$
into countably many essentially disjoint polyhedra, $\calT^*_0$, finer than $\mathcal T_0$ and with the same periodicity, and $C>0$ such that,
for any $\eps>0$ {and} $\zeta\in B_\eps$, to any function $f:\R^n\to\R^m$ one can associate a
function {$\Pi_{\eps,\zeta}f:\R^n\to\R^m$}, affine in the interior of each element of $\calT_{\eps,\zeta}^*:=\zeta+\eps\mathcal T_0^*$, so that the following holds:
\begin{enumerate}
\item\label{propprojgrids} 
If either ${\tau_0}\in \calT_0$ or ${\tau_0}\in \calT^*_0$ then $\diam({\tau_0})\le \sqrt n$ and $|{\tau_0}|\ge 1/C$;
\item\label{propprojprojection} $\Pi_{\eps,\zeta}$ is a projection, in the sense that $\Pi_{\eps,\zeta}\Pi_{\eps,\zeta}f=\Pi_{\eps,\zeta}f$ for all $f$, and it commutes with translations, in the sense that
$\Pi_{\eps,\zeta}[f(\cdot-\zeta)]=[\Pi_{\eps,0}f](\cdot-\zeta)$;
\item\label{propprojjump} {One has}
{$\Pi_{\eps,\zeta}{f}
\in  SBV_\loc(\R^n;\R^m)$ and,} with $\calT_{\eps,\zeta}:=\zeta+\eps\calT_0$,
\begin{equation}
|D\Pi_{\eps,\zeta}{f}|(\bigcup_{\tau\in \mathcal T_{\eps,\zeta}} \partial \tau )=0.
\end{equation}
If $u\in  SBV_\loc(\R^n;\R^m)$, then for {$\calL^n$}-almost every {$\zeta\in B_\eps$} one has
\[\calH^{n-1}(J_u\cap \bigcup_{\tau\in \mathcal{T}_{\eps,\zeta}} \partial \tau )=0.
\]
\item\label{propprojlocal} The function $\Pi_{\eps,\zeta} {f}$ on a set $\omega$ depends only on the value of ${f}$ on the set
$(\omega)_{\eps\sqrt n}$.
\item\label{propprojaffine} If ${A}:\R^n\to\R^{m}$ is affine and $\lambda\in\R$, 
then for any function ${f}$ one has $\Pi_{\eps,\zeta} (\lambda {f}+{A})= \lambda(\Pi_{\eps,\zeta}{f})+ {A} $;
if $u,v\in SBV_\loc(\R^n;\R^m)$ then for almost every {$\zeta\in B_\eps$} one has
$\Pi_{\eps,\zeta} (u+v)=\Pi_{\eps,\zeta}u+\Pi_{\eps,\zeta}v$;
\item\label{propprojbounds} 
For any $\eta\in\R^{m\times n}$ and $\tau_0\in \mathcal T_0$,
one has 
for any
$u\in SBV_\loc(\R^n;\R^m)$ that
\begin{equation}\label{e:stima bulk}
	\strokedint_{B_\eps}\Big(\int_{\zeta +\eps {\tau_0}}
|\nabla \Pi_{\eps,\zeta} u-\eta|^p dx\Big)d\zeta
\leq C\int_{{{(\eps \tau_0)_{\cpropjbounds\eps}}}}
|\nabla u-\eta|^p dx,
\end{equation}
\begin{equation}\label{e:stima superficie}
 \strokedint_{B_\eps}{\Big(} \int_{J_{\Pi_{\eps,\zeta}u}\cap(\zeta+\eps \tau_0)}g_0(|[\Pi_{\eps,\zeta}u]|)d\calH^{n-1}{\Big)}
 d\zeta\leq C\int_{J_u\cap {(\eps \tau_0)_{\cpropjbounds\eps}}
 }g_0(|[u]|)d\calH^{n-1},
 \end{equation}
\begin{equation}\label{e:stima superficiearea}
 \strokedint_{B_\eps} \calH^{n-1}(J_{\Pi_{\eps,\zeta}u}\cap(\zeta+\eps \tau_0))
 d\zeta\leq C\calH^{n-1} ( J_u\cap {(\eps \tau_0)_{\cpropjbounds\eps}}
 ),
 \end{equation}
\begin{align}\label{eqnorm1}
\strokedint_{B_\eps}&\Big(\int_{\zeta+\eps\tau_0}|\Pi_{\eps,\zeta} u-u|dx\Big)d\zeta
\le C \eps |Du|({(\eps \tau_0)_{\cpropjbounds\eps}}),
\end{align}
and, for every $n-1$-rectifiable set $\Sigma$,
\begin{equation}\label{eqpsurface}
 \strokedint_{B_\eps}{\Big(} \int_\Sigma | \Pi_{\eps,\zeta} u| d\calH^{n-1}{\Big{)}} d\zeta
 \le \frac{Ck_\Sigma}{\eps} \| u\|_{L^1((\Sigma )_{\cpropjbounds\eps})}
 +{Ck_\Sigma}| Du|((\Sigma )_{2\cpropjbounds\eps}),
\end{equation}
where
\begin{equation}
 k_\Sigma:=\sup_{r>0,x\in \R^n} \frac{\calH^{n-1}(\Sigma\cap B_r(x))}
 {r^{n-1}}.
\end{equation}
{Here $\cpropjbounds\in [1+\sqrt n,\infty)$ is a constant that depends only on $n$; $C$ may depend on $n$, $m$, $p$.}
\item\label{propprojregular} If
$u\in SBV_\loc(\R^n;\R^m)$ and
$\nabla u=0$ $\calL^n$-almost everywhere
then for almost every {$\zeta\in B_\eps$} one has $\nabla\Pi_{\eps,\zeta}u=0$ $\calL^n$-almost everywhere.
In particular, if $u=\chi_E$ for some set $E$ then there is a countable union of polygons $F_{\eps,\zeta}$ such that $\Pi_{\eps,\zeta}u=\chi_{F_{\eps,\zeta}}$.
If $\calH^{n-1}(J_u)=0$ then for almost every {$\zeta\in B_\eps$} one has
$\calH^{n-1}(J_{\Pi_{\eps,\zeta}u})=0$. 
\end{enumerate}
\end{proposition}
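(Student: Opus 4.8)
The plan is to construct $\Pi_{\eps,\zeta}$ by applying the single-simplex interpolation of Proposition~\ref{propsimplex} componentwise on each simplex of the scaled, translated grid $\calT_{\eps,\zeta}=\zeta+\eps\calT_0$, using the edge data $s_{[a,b]}$ and $\xi_{[a,b]}$ defined in \eqref{e:jumpedge}--\eqref{e:gradientedge}. The refined partition $\calT_0^*$ is obtained from $\calT_0$ by subdividing each $\tau_0\in\calT_0$ according to the Voronoi-type polyhedra $T_1,\dots,T_{n+1}$ of Proposition~\ref{propsimplex}; since that decomposition is determined only by the positions of the vertices and $\calT_0$ has the stated $[0,2]^n$-periodicity (Lemma~\ref{lemmatriangRn}\ref{lemmatriangRnper}), $\calT_0^*$ inherits the periodicity and local finiteness, giving item~\ref{propprojgrids} from Lemma~\ref{lemmatriangRn} and the diameter/volume bounds recorded there. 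For the definition on arbitrary $f:\R^n\to\R^m$ (not equivalence classes), I set $s_{[a,b]}$ to be the jump sum along the open segment, declared zero whenever the sum fails to converge or $f$ is not a good $BV$ slice there, so that $\Pi_{\eps,\zeta}f$ is always well-defined pointwise; for $u\in SBV_\loc$ the good behaviour holds for $\calL^{2n}$-a.e.\ pair $(a,b)$, hence for $\calL^n$-a.e.\ $\zeta$ all relevant edges are good simultaneously.

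With this in place, items \ref{propprojprojection}--\ref{propprojaffine} are mostly bookkeeping. The projection property holds because if $f$ is already affine on each element of $\calT_{\eps,\zeta}^*$ then its edge data reproduce $f$ exactly and Proposition~\ref{propsimplex} returns $f$; translation-covariance is immediate from the construction; \ref{propprojjump} follows since $\Pi_{\eps,\zeta}u$ is affine in the interior of each refined polyhedron and jumps only on the interfaces $\partial T_i\cap\partial T_j$, which are interior to the simplices of $\calT_{\eps,\zeta}$, so $|D\Pi_{\eps,\zeta}u|$ charges no $\partial\tau$, while $\calH^{n-1}(J_u\cap\bigcup\partial\tau)=0$ for a.e.\ $\zeta$ by a Fubini argument (the family $\{\zeta+\eps\,\partial\tau_0\}_\zeta$ sweeps out a set, and $J_u$ being $(n-1)$-rectifiable meets $\calL^n$-a.e.\ translate in an $\calH^{n-1}$-null set). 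Locality \ref{propprojlocal} holds because the edge data on a simplex $\tau$ depend only on values of $f$ on the $1$-skeleton of $\tau\subseteq(\omega)_{\eps\sqrt n}$, and by the last clause of Proposition~\ref{propsimplex} the restriction to a face does not involve the opposite vertex, so adjacent interpolations match and the patched function is well-defined. Linearity over affine perturbations and over sums of $SBV_\loc$ functions follows from the linearity of $v$ in $\{u_i\}\cup\{s_{ij}\}$ asserted in Proposition~\ref{propsimplex}, noting $s_{[a,b]}$ and $\xi_{[a,b]}$ depend linearly on $u$ for a.e.\ $\zeta$. Item~\ref{propprojregular} is then a direct consequence: if $\nabla u=0$ a.e.\ then $\xi_{[a,b]}=\int_0^1\nabla u\cdot(b-a)\,dt=0$ for a.e.\ edge, so the first estimate in \eqref{e:stimeseparate} forces $\nabla\Pi_{\eps,\zeta}u=0$; the $\chi_E$ statement and the $\calH^{n-1}(J_u)=0$ statement follow similarly, the latter because then $s_{[a,b]}=0$ for a.e.\ edge.

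The core of the work, and the main obstacle, is item~\ref{propprojbounds}: converting the pointwise slice estimates \eqref{eqgsabsubadd} and \eqref{eqxisegmentnablau} into the averaged integral bounds \eqref{e:stima bulk}--\eqref{eqpsurface}. The strategy is a change of variables on $B_\eps$. For the bulk bound, on a fixed simplex $\zeta+\eps\tau_0$ the gradient $\nabla\Pi_{\eps,\zeta}u$ on each sub-polyhedron is $(\mathbb A_j^{-1})^T$ applied to the vector of edge averages $(\xi_{[a,b]}/\eps)$; using the volume/diameter bounds of Lemma~\ref{lemmatriangRn} to control $\|\mathbb A_j^{-1}\|$ by $C/\eps$, then \eqref{eqxisegmentnablau} to bound $|\xi_{[a,b]}/\eps|^p$ by an integral of $|\nabla u|^p$ along the segment $[a,b]$, and finally integrating over $\zeta\in B_\eps$ and applying Fubini, one recognizes the $\zeta$-integral of a line integral along segments of bounded direction as comparable to $\eps^{-n}$ times an $n$-dimensional integral over a neighbourhood; the constant $\cpropjbounds\ge 1+\sqrt n$ accounts for the size of that neighbourhood. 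Subtracting the affine part $\eta x$ first, via \ref{propprojaffine}, makes $\eta$ appear on both sides. The surface estimate \eqref{e:stima superficie} is analogous: $[\Pi_{\eps,\zeta}u]$ on an interface is a combination of the $s_{[a,b]}$'s (up to the factor $3$), $g_0$ is subadditive and monotone, $\calH^{n-1}(J_v)\le\calH^{n-1}(\partial T)$, and \eqref{eqgsabsubadd} controls $g_0(|s_{[a,b]}|)$ by a sum of $g_0$ of jumps of $u$ along the segment; the coarea/Fubini averaging over $\zeta$ then turns the one-dimensional jump sums into the $\calH^{n-1}$-integral over $J_u$ in a neighbourhood. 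Estimate \eqref{e:stima superficiearea} is the same with $g_0\equiv1$. For \eqref{eqnorm1} one writes $\Pi_{\eps,\zeta}u-u$ as a telescoping combination of differences $u(b)-u(a)$ over edges minus the interpolation error, each controlled by $|Du|$ on the segment, and averages; \eqref{eqpsurface} is obtained by combining a trace-type bound on $(n-1)$-rectifiable sets with the $BV$ estimate, the constant $k_\Sigma$ entering through the density bound on $\Sigma$. The delicate points throughout are keeping the neighbourhood radii uniform (hence the explicit $\cpropjbounds$), checking the measurability in $\zeta$ needed for Fubini, and verifying that the Jensen and subadditivity steps survive the averaging — these are routine but must be done carefully.
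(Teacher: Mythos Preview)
Your plan is correct and follows essentially the same approach as the paper: build $\calT_0^*$ from the decomposition of Proposition~\ref{propsimplex}, define $\Pi_{\eps,\zeta}$ simplex-by-simplex from the edge data, and prove the bounds in~\ref{propprojbounds} by combining the pointwise estimates \eqref{e:stimeseparate}--\eqref{e:lengthjump} with the slice inequalities \eqref{eqgsabsubadd}--\eqref{eqxisegmentnablau} and averaging in $\zeta$ via Fubini. Two small points to sharpen when you write details: for the projection property you should argue, as the paper does, that applying $\Pi_{\eps,\zeta}$ preserves the edge data (via the explicit form \eqref{e:vedge}, the image has a single mid-edge jump of the right amplitude), since merely being piecewise affine on $\calT_{\eps,\zeta}^*$ is not by itself enough to be a fixed point; and for \eqref{eqpsurface} the key step is not a trace inequality but the inverse inequality $\|\Pi_{\eps,\zeta}u\|_{L^\infty(\zeta+\eps\tau_0)}\le C\eps^{-n}\|\Pi_{\eps,\zeta}u\|_{L^1(\zeta+\eps\tau_0)}$ valid for piecewise-affine functions on a uniformly nondegenerate grid, after which one sums over simplices meeting $\Sigma$ (using $k_\Sigma$) and invokes \eqref{eqnorm1}.
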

{Condition \eqref{e:stima bulk} easily implies that for any Borel set $\omega\subset\R^n$ and any $\eta$
\begin{equation}\label{e:stima bulkomega}
	\strokedint_{B_\eps}\Big(\int_{\omega}
|\nabla \Pi_{\eps,\zeta} u-\eta|^p dx\Big)d\zeta
\leq C\int_{{{(\omega)_{2\cpropjbounds\eps}}}}
|\nabla u-\eta|^p dx.
\end{equation}
Indeed, it suffices to sum \eqref{e:stima bulk} over all $\tau_0\in \calT_0$ such that
there is $\zeta\in B_\eps$ with $(\zeta+\eps\tau_0)\cap \omega\ne \emptyset$, which implies $\eps\tau_0\subseteq (\omega)_{(1+\sqrt n)\eps}$.
Analogous observations  hold for \eqref{e:stima superficie}, 
\eqref{e:stima superficiearea}
 and \eqref{eqnorm1}.}

We remark that {\eqref{eqpsurface} fails if we remove the derivative term
in the right-hand side.} 
		Consider for example the sequence $u_j(x):=\frac1j \langle jx_1\rangle$, {where $\langle x\rangle:=x-\lfloor x\rfloor$ denotes the fractional part of $x\in\R$,} which converges uniformly to $0$ {as $j\to\infty$}. As $\nabla u_je_1=1$ almost everywhere, for any $\eps$ and $\zeta$ we have
$\partial_1 \Pi_{\eps,\zeta}u_j=1$ almost everywhere, and since {$\Pi_{\eps,\zeta}u_j$} is piecewise affine on a scale $\eps$ we obtain
$\|\Pi_{\eps,\zeta}u_j\|_{L^\infty} \ge \frac12\eps$, which does not depend on $j$.
{Similarly, one
cannot estimate $\Pi_{\eps,\zeta}u$ in $L^1$ only in terms of the $L^1$ norm of $u$.}

\begin{proof}
The grid $\calT_0^*$ is defined decomposing each simplex $\tau_0\in\calT_0$ as in Proposition~\ref{propsimplex}.
The projection $\Pi_{\eps,\zeta}{f}$ is defined by application of
the construction in Proposition~\ref{propsimplex} {componentwise} in each simplex
$\tau\in\calT_{\eps,\zeta}=\zeta+\eps\calT_0$.

Precisely, let 
$\tau=\zeta+\eps\tau_0$, for some
$\tau_0\in\mathcal{T}_0$, and let $\{w_1,\dots, w_{n+1}\}:=\Ver(\tau)$ be its vertices. 
In order to define the cumulated jump over the edge $[w_i,w_j]$ we consider the slice
$ v^{{f}}_{ij}(t):={f}(w_i+t(w_j-w_i))$, for $t\in [0,1]$. If
$v^{{f}}_{ij}\in SBV((0,1);\R^m)$ then we set
\begin{equation}\label{eqdefsij}
 {s_{ij}^{{f}}}:=\sum_{t\in (0,1)\cap J_{v^{{f}}_{ij}}} [v^{{f}}_{ij}](t),
\end{equation}
otherwise we set ${s_{ij}^{{f}}}:=0$. The function $\Pi_{\eps,\zeta}{{f}}$ is then defined in $\tau$ using Proposition~\ref{propsimplex} {componentwise}.
As discussed above, if ${f}=u\in SBV_\loc(\R^n;\R^m)$ then for almost every choice of $\zeta$ one has that ${v_{ij}^u}\in SBV((0,1);\R^m)$ for all choices of {$\tau_0$} and of $i,j$.

\ref{propprojgrids}: The upper bound on the diameter follows from Lemma~\ref{lemmatriangRn} and the fact that $\calT_0^*$ is a refinement of $\calT_0$. The lower bound on the volume follows from the fact that both grids are locally finite and periodic.

\ref{propprojprojection}: {Assume for simplicity that $f$ is scalar.}
For any $\tau$ and $w_1,\dots, w_{n+1}$ as above, one easily obtains that $(\Pi_{\eps,\zeta}f)(w_i)=f(w_i)$. 
{
Let $s_{ij}^f$ be defined as in \eqref{eqdefsij}.
By
\eqref{e:vedge}
the function
$v_{ij}^{\Pi_{\eps,\zeta}f}$
has a unique jump point in $(0,1)$, which is located at $1/2$, and the amplitude of the jump is exactly $s_{ij}^f$.
Therefore $s_{ij}^{\Pi_{\eps,\zeta}f}=s_{ij}^f$ and} $\Pi_{\eps,\zeta}$ is a projection.
The relation to translations follows observing that
for any $\tau_0\in\calT_0$ the vertices of ${\zeta+\eps\tau_0}$ are translated
with respect to the vertices of ${\eps\tau_0}$ by $\zeta$.

\ref{propprojjump}: 
Let $\tau\ne\tau'\in \mathcal T_{\eps,\zeta}$ be such that ${\calH^{n-1}(\partial \tau\cap \partial \tau')>0}$, so that by Lemma~\ref{lemmatriangRn}\ref{enumgridintersect}  $\partial \tau\cap \partial \tau' =\conv(\Ver(\tau)\cap\Ver(\tau'))$. Proposition~\ref{propsimplex} implies that $\Pi_{\eps,\zeta} {f}|_{\partial \tau\cap \partial \tau'}$ only depends on the in-plane vertices
$\Ver(\tau)\cap\Ver(\tau')$, on the values of ${f}$ on such vertices, and on the jumps $s_{\sigma}$ on the in-plane edges $\sigma\subset\partial \tau\cap \partial \tau'$.
Hence $|D\Pi_{\eps,\zeta} {f}|(\bigcup_{\tau\in \mathcal T_{\eps,\zeta}} \partial \tau)=0$.
The second condition follows from the fact that $\calH^{n-1}\LL{J_u}$ is {$\sigma$-finite.}

\ref{propprojlocal}: Given a set $\omega\subset\R^n$, the function $\Pi_{\eps,\zeta} {f}|_{\omega}$
only depends on the values of ${f}$ on the vertices of the simplexes intersecting $\omega$. Since their diameter is by construction not greater than $\eps\sqrt n$, $\Pi_{\eps,\zeta} {f}|_{\omega}$ only depends on the value of ${f}$ on the neighborhood $(\omega)_{\eps\sqrt n}$.

\ref{propprojaffine}: 
It follows from the fact that the function constructed in Proposition~\ref{propsimplex} depends linearly on the prescribed values  on the vertices $u_i$ and jumps $s_{ij}$.

\ref{propprojbounds}:
By \ref{propprojaffine}, it suffices to prove the first bound in the case
$\eta=0$.
Let ${\tau_0}\in \mathcal T_0$, and $\{w_1,\dots, w_{n+1}\}=\Ver(\eps\tau_0)$.
For any $\zeta\in B_\eps$, by the uniform estimate in \eqref{e:stimeseparate}
we have
\begin{align}\label{e:stima grad proj}
{\int_{\zeta+\eps{\tau_0}}
{|\nabla \Pi_{\eps,\zeta} u|^p}dx
\leq C \eps^n\sum_{i,j}\Big|\frac{\xi_{\zeta+[w_i,w_j]}}{|w_i-w_j|}\Big|^p.}
\end{align}
Next, we claim that {for all $i,j$}
\begin{equation}\label{e:sectionsgradient}
\int_{B_\eps}
\left|\frac{\xi_{\zeta+[w_i,w_j]}}{{|w_i-w_j|}}
   \right|^pd\zeta\le
 \int_{{B_{(1+{\sqrt n})\eps}(w_i)}} |\nabla u|^p dx.
\end{equation}
Indeed, 
starting from \eqref{eqxisegmentnablau} and integrating over all translations
$\zeta\in B_\eps$ we {get, 
setting $\ell:=w_j-w_i$},
\begin{equation*}\begin{split}
{\int_{B_\eps}
\left|\frac{\xi_{\zeta+[w_i,w_j]}}{{|\ell|}}
   \right|^p}  d\zeta \leq&
 \int_{B_\eps}
  \int_0^1 |\nabla u(\zeta+{ w_i+t \ell})|^p dtd\zeta\\
=&
  \int_0^1
   \int_{B_\eps({ w_i}+t \ell)}
  |\nabla u(x)|^p dxdt
 \le
   \int_{B_{{(1+\sqrt n)}\eps}({w_i})}
  |\nabla u|^p dx
  \end{split}
\end{equation*}
{since $|\ell|=|w_i-w_j|\le\eps\sqrt n$.}
Therefore, from \eqref{e:stima grad proj} and \eqref{e:sectionsgradient} we conclude
\begin{align*}
	\strokedint_{B_\eps}&\Big(\int_{\zeta +\eps \tau_0}
{|\nabla \Pi_{\eps,\zeta} u|^p}dx\Big)d\zeta\\
    &  \leq C\sum_{i}
	\int_{B_{{({1+\sqrt n})}\eps}({ w_i})} |\nabla u|^p dx
\leq C\int_{{(\eps \tau_0)_{(1+\sqrt n)\eps}}} |\nabla u|^p dx,
\end{align*}
{which concludes the proof of \eqref{e:stima bulk}.}

Analogously, using \ref{propprojjump}, \eqref{e:stimeseparate}, and \eqref{e:lengthjump}, we get
\begin{align} \label{e:stima salto proj}
\int_{J_{\Pi_{\eps,\zeta} u}\cap (\zeta+ \eps {\tau_0})}
g_0({|[\Pi_{\eps,\zeta} u]|})d\calH^{n-1}
	\leq C\eps ^{n-1}
\sum_{i,j}g_0({|s_{\zeta+{[w_i,w_j]}}|}),
\end{align}
by monotonicity and subadditivity of $g_0$, where as before the $w_i$ are the vertices of
${\eps\tau_0}$. We claim that
\begin{equation}\label{e:sectionsjump}
\int_{B_\eps} {g_0({|s_{\zeta+{[w_i,w_j]}}|})}d\zeta \le C
\eps\int_{B_{{(2+{\sqrt n})}\eps}({ w_i})\cap J_u} {g_0(|[u]|)}d\calH^{{n-1}}\,.
\end{equation}
Indeed, we start from \eqref{eqgsabsubadd}, integrate over translations, and separate the component
$\zeta_\ell$ along $\ell$ from the orthogonal ones, which we denote by $\zeta'$, so that
$\zeta=\zeta'+{\zeta_\ell} \ell/|\ell|$.
Using the Coarea formula we estimate as follows
\begin{equation}\label{eqgsabsubaddb}
\begin{split}
{\int_{B_\eps} g_0({|s_{\zeta+{[w_i,w_j]}}|})d\zeta} &\le 
\int_{B_\eps} \sum_{t\in(0,1)} 
{g_0(|[u](\zeta+{ w_i}+t\ell)|)}d\zeta\\
&\le\int_{-\eps}^\eps \int_{B'_\eps}
 \sum_{t\in(0,1)} 
{g_0(|[u]({\zeta'}+{w_i}+t\ell+\zeta_\ell \frac{\ell}{|\ell|})|)}d\zeta'd{\zeta_\ell}\\
 &\le 2\eps \int_{B'_\eps}
 \sum_{t\in(-\eps, \eps (1+{\sqrt n}))} 
{g_0(|[u](\zeta'+{w_i}+t\frac\ell{|\ell|})|)}d\zeta'\\
 &\le 2\eps \int_{J_u\cap B_{{{(2+\sqrt n})}\eps}({w_i})}
{g_0(|[u]|)}\,\Big|{\nu_u}\cdot \frac{\ell}{|\ell|}\Big| d\calH^{n-1}{.}
 \end{split}
\end{equation}
Clearly, \eqref{e:sectionsjump} easily follows from \eqref{eqgsabsubaddb}.

Hence, by {\eqref{e:stima salto proj}} and \eqref{e:sectionsjump}
\begin{equation}\label{eq439}
	\strokedint_{B_\eps}\int_{J_{\Pi_{\eps,\zeta} u}\cap (\zeta +\eps {\tau_0})}
{g_0(|[\Pi_{\eps,\zeta} u]|)}d\calH^{n-1}d\zeta
	\leq C\int_{J_u\cap {(\eps\tau_0)_{{(2+\sqrt n)}\eps}}}
g_0(|[u]|)d\calH^{n-1}
\end{equation}
which concludes the proof of \eqref{e:stima superficie}.

The proof of \eqref{e:stima superficiearea} is similar.
Let $g_1:[0,\infty)\to[0,\infty)$ be defined by $g_1(0)=0$, $g_1(s)=1$ for $s\ne 0$.
The
{derivation of \eqref{e:stima salto proj}} above
uses only {\ref{propprojjump}, \eqref{e:stimeseparate}, \eqref{e:lengthjump}, and the fact} that $g_0$ is nondecreasing and subadditive, and
$g_1$ has the same properties. By \eqref{eqdefsij},
$s_{ij}^u=0$ if $v^u_{ij}$ does not jump on $[0,1]$,
{thus} we obtain instead of \eqref{e:stima salto proj} the estimate
\begin{align} \label{e:stima salto proj1}
\int_{J_{\Pi_{\eps,\zeta} u}\cap (\zeta+ \eps {\tau_0})}
g_1({|[\Pi_{\eps,\zeta} u]|})d\calH^{n-1}
	\leq C\eps ^{n-1}
\sum_{i,j}g_1({|s_{\zeta+{[w_i,w_j]}}|}).
\end{align}
The rest of the computation {leading to \eqref{eq439}} is unchanged. {This proves
\eqref{e:stima superficiearea}.}

Next, we estimate the $L^1$ distance of $u$ from $\Pi_{\eps,\zeta} u$.
By \eqref{e:stimeseparate} for ${\tau_0}\in\mathcal T_0$ one has
the pointwise estimate
\begin{align*}
\strokedint_{B_\eps}&\int_{\zeta+\eps{\tau_0}}|\Pi_{\eps,\zeta} u-u|dx\,d\zeta\\
&\leq\strokedint_{B_\eps}\int_{\zeta+\eps{\tau_0}}
\sum_{ i}|u(\zeta+{w_i})-u(x)|
dx\,d\zeta+
C\eps^{n}\strokedint_{B_\eps}\sum_{i,j} |\xi_{\zeta+{[w_i,w_j]}}|d\zeta.
\end{align*}
We observe that for all choices of $i$, $\zeta$ and $x$ we have $x\in \zeta+\eps\tau_0\subseteq {(\eps\tau_0)_\eps}$
and $\zeta+w_i\in  \zeta+\eps\tau_0\subseteq {(\eps\tau_0)_\eps}$.
Therefore, each addend in the first term can be estimated using Poincar\'e's inequality for $BV$ functions by
\begin{align*}
\strokedint_{B_\eps}\int_{\zeta+\eps{\tau_0}}
&|u(\zeta+{w_i})-u(x)|
dx\, d\zeta\le  \frac{1}{|B_\eps|}
\int_{(\eps\tau_0)_\eps} \int_{(\eps\tau_0)_\eps}  |u(y)-u(y')| dy dy'\\
\le&  {2}\frac{|{(\eps\tau_0)_\eps} |}{|B_\eps|}
\int_{(\eps\tau_0)_\eps} |u(y)-\bar u| dy
\le  C \eps |Du|{((\eps\tau_0)_\eps)},
\end{align*}
where $\bar u$ denotes the average of $u$ in $(\eps\tau_0)_\eps$.
For the second one, we write using
\eqref{e:sectionsgradient} {with $p=1$}
\begin{align*}
\eps^n\strokedint_{B_\eps}\sum_{i,j} |\xi_{\zeta+{[w_i,w_j]}}|d\zeta\le C\eps\int_{(\eps\tau_0)_{C\eps}}
|\nabla u| dx.
\end{align*}
Combining the two gives \eqref{eqnorm1}.

{Finally, we prove} \eqref{eqpsurface}: for any $\tau_0\in \calT_0$ {and $\zeta\in B_\eps$}, we have
$\calH^{n-1}(\Sigma\cap (\zeta+\eps\tau_0))\le C k_\Sigma \eps^{n-1}$.
As the map $\Pi_{\eps,\zeta}u$ is affine on each element
of $\zeta+\eps\calT_0^*$, we have
\begin{equation}
 \|\Pi_{\eps,\zeta}u\|_{L^\infty(\zeta+\eps\tau_0)}\le \frac{C}{\eps^n} \|\Pi_{\eps,\zeta}u\|_{L^1(\zeta+\eps\tau_0)}.
\end{equation}
We sum over all $\tau_0$ such that $\zeta+\eps\tau_0$ intersects $\Sigma$ and obtain
\begin{equation}
 \int_\Sigma | \Pi_{\eps,\zeta} u| d\calH^{n-1}
 \le  \frac{Ck_\Sigma}{\eps} \| \Pi_{\eps,\zeta} u\|_{L^1((\Sigma )_{\sqrt n \eps})}
\end{equation}
{for a.e.\ $\zeta\in B_\eps$.}
Then we use \eqref{eqnorm1} and a triangular inequality to conclude.

\ref{propprojregular}: If $\nabla u=0$ $\calL^n$-almost everywhere, then
{\eqref{e:stima bulk} with $\eta=0$ implies $\nabla \Pi_{\eps,\zeta}u=0$  $\calL^n$-almost everywhere for $\calL^n$-almost every $\zeta$.}
{If $u$ takes values in $\{0,1\}$, then for each application of Proposition \ref{propsimplex} we have that $\xi_{ij}=0$,
therefore the constructed function is piecewise constant and takes values in the set $\{u_1, \dots, u_{n+1}\}\subseteq\{0,1\}$.}

Finally, if $\calH^{n-1}(J_u)=0$ then necessarily $u\in {W^{1,1}_\loc}(\R^n;\R^m)$.
In turn, the slices of $u$ are Sobolev functions for $\calL^n$-almost every $\zeta\in B_\eps$,
so that $J_{\Pi_{\eps,\zeta}u}=\emptyset$, in turn implying that $\Pi_{\eps,\zeta}u$ is actually 
continuous and in ${W^{1,1}_\loc}(\R^n;\R^m)$
(alternatively, this follows also {from \eqref{e:stima superficiearea}).}
\end{proof}

\subsection{Global construction}\label{s:globalconstruction}
We are now ready to establish Theorem~\ref{theodensityintro}.
The proof contains two different scales, denoted by $\delta$ and $\eps$ in the following. The scale $\delta$ is the one at which the function $u$ has approximately regular jump and gradient, and is identified in Proposition~\ref{propdeltafromtheta}.
The second scale $\eps\ll\delta$,
{used for the construction}
in Proposition~\ref{propproj}, is the one on which we construct a piecewise affine approximation of $u$. {This is achieved in each cube at scale $\delta$ by applying Proposition~\ref{propproj} to the extensions, obtained via Theorem~\ref{propextension}, of $u$ itself restricted to domains separated by the regular part of its jump set}.
In turn, the {regularization $L_z$ of the} jump set $J_u$ will be separately approximated using piecewise affine elements in $\R^{n-1}$ using again the scale $\eps$.
{Figure~\ref{figgrids2} shows a sketch of the construction, the different parts will become clear during the proof.}

\begin{figure}
 \begin{center}
  \includegraphics[width=10cm]{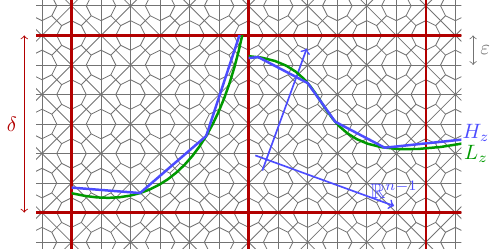}
 \end{center}
\caption{{Sketch of the grids used in the proof of Theorem~\ref{theodensityintro}.
The grid  $(\mathcal T',V')$ is taken in $\R^{n-1}$, {it} is then rotated. Similarly, $H_z$ and $L_z$ are graphs (of $\psi_z$ and $\varphi_z$, respectively) in these rotated coordinates.
}}
\label{figgrids2}
\end{figure}

\begin{proof}[Proof of Theorem~\ref{theodensityintro}]
Let $u\in SBV(\Omega;\R^m)$ and $\theta\in{(0,\frac12]}$. To simplify {the} notation we work at fixed $\theta$, and in the end choose a sequence $\theta_j\to0$.
It is not restrictive to assume additionally that
\begin{equation}\label{s<g0}
{t\leq g_0(t) \text{ for all } t\in [0,\infty)};
\end{equation}
indeed, this follows by proving first the theorem with $g_0(t)+t$ in place of $g_0$ and then deducing the statement for $g_0$ as a by-product.
By \eqref{e:g0 growth}, for any $\lambda>0$ (fixed below, it will depend on $\theta$ and $\delta$ but not on $\eps$ and $\delta'$;
{$\lambda=\theta\delta$ will do}) there is $C_\lambda\ge1$ such that
\begin{equation}\label{eqg0slambdapff}
 g_0({t})\le \lambda+C_\lambda {t} \text{ for all }  {t\in [0,\infty)}.
\end{equation}
This will be used to estimate terms of the form $g_0({|}[w]{|})$  on sets of finite $n-1$-dimensional measure in terms of the jump.
\newcommand{\itemtit}[1]{\item{\emph{#1.}}}
\begin{schrittlist}
 \itemtit{Choice of the scale $\delta$ on which $u$ is regular and of the sets $A_\delta$, $A_\delta^*$}

By {Theorem}~\ref{propextension} we can assume that $u\in SBV(\R^n;\R^m)$ {with $|Du|(J_u\cap \partial \Omega)=0$,}
\begin{equation}
\int_{\Omega'}|\nabla u|^p\,dx\leq\int_{\Omega}|\nabla u|^p\,dx+\theta\,,
\end{equation}
and
\begin{equation}
\int_{\Omega'\cap J_u}g_0(|[u]|)d\calH^{n-1}\leq\int_{\Omega\cap J_u}g_0(|[u]|)d\calH^{n-1}+\theta\,,
\end{equation}
for some {bounded} open set $\Omega'$ with $\overline\Omega\subset\Omega'$ and $|\Omega'|\le 2 |\Omega|$.
We choose {$\delta_0\in(0,\theta]$} such that
{$3\delta_0\sqrt n\le \dist(\Omega,\partial\Omega')$ and}
\begin{equation}\label{eqchoicedelta0}
\int_{(\partial\Omega)_{3\delta_0\sqrt n}}|\nabla u|^p dx +
 \mu_u( (\partial\Omega)_{3\sqrt n \delta_0})\le \theta,
\end{equation}
{where $\muj:=
g_0(|[u]|)\calH^{n-1}\LL J_u$ as in \eqref{e:muj}.}
If $\calH^{n-1}(J_u{\cap\Omega})<\infty$,
{then Theorem~\ref{propextension} gives $\calH^{n-1}(J_u)<\infty$ and we may}
also require
\begin{equation}\label{eqchoicedelta02}
\calH^{n-1}(J_u\cap (\partial\Omega)_{3\sqrt n \delta_0})\le \theta.
\end{equation}
By Proposition~\ref{propdeltafromtheta}, used with {$\delta_0$}
in place of $\theta$, there is
$\delta\in (0,{\delta_0}]\subseteq(0,\theta]$ such that, with
${A_\delta}:=\{z\in (\delta\Z^n)\cap\Omega: \dist(z,{\partial\Omega}){>}{\delta\sqrt n}\}$, there are
$R:{A_\delta}\to\SO(n)$,  $s:{A_\delta}\to \R^m$,  $\eta:{A_\delta}\to\R^{m\times n}$, $\varphi:A_\delta\to {C^1_c}(\R^{n-1})$, $x:A_\delta\to\R^n$
{which, setting}
\begin{equation}\label{eqdefLz}
 L_z:=x_z+R_z\{(y',\varphi_z(y')): y'\in\R^{n-1}\}
\end{equation}
and $\Qlargez:=z+(-\delta,\delta)^n$,
{satisfy}
 $\|D\varphi_z\|_{L^\infty}\le \theta$ and
\begin{equation}\label{e:approx scala delta}
\begin{split}
\sum_{z\in {A_\delta}} &\int_{\Qlargez}|\nabla u-\eta_z|^p dx +
\sum_{z\in {A_\delta}}
\int_{\Qlargez\cap J_u\setminus L_z} 
g_0(|[u]|)d\calH^{n-1}\\
+&
\sum_{z\in {A_\delta}}\int_{\Qlargez\cap L_z}[g_0(|[u]-s_z|){+g_0(|[u]|)|\nu_u-R_ze_n|]}d\calH^{n-1}\le C\theta\,.
\end{split}
\end{equation}
{Here and in what follows we do not explicitly indicate the}
dependence {of $C$} on $\mu_u(\Omega)$ and $|\Omega|$.
If $\calH^{n-1}(J_u)<\infty$ we have in addition
\begin{equation}\label{e:approx scala delta02}
\sum_{z\in {A_\delta}}\calH^{n-1}(\Qlargez\cap (J_u{\triangle} L_z))\le C\theta.
\end{equation}

We further define $A_\delta^*:=\{z\in (\delta \Z)^n: \dist(z,\partial\Omega)\le {\delta\sqrt n}\}$.
We observe that
${Q^*_z}\subseteq B_{\delta\sqrt n}(z)$, so that
by \eqref{eqchoicedelta0}
\begin{equation}\label{e:approx scala delta2}
\begin{split}
\sum_{z\in {A_\delta^*}} \int_{\Qlargez}|\nabla u|^p dx &+
\sum_{z\in {A_\delta^*}}
\int_{\Qlargez\cap J_u} 
g_0(|[u]|)d\calH^{n-1}\\
&\le C(\int_{(\partial\Omega)_{2\delta\sqrt n}}|\nabla u|^p dx +\mu_u((\partial\Omega)_{2\delta\sqrt n}))\le C\theta\,.
\end{split}
\end{equation}

For $\gamma\in B_{\sfrac\delta4}$ and $z\in A_\delta{\cup A_\delta^*}$
we define $Q_z^\gamma:=\gamma+z+(-\delta/2,\delta/2)^n\subseteq \Qlargez$,
and observe that since $2\delta\sqrt n\le {2\delta_0}\sqrt n\le
\dist(\Omega,\partial\Omega')$
we have $\Omega\subset \bigcup_{z\in A_\delta{\cup A_\delta^*}}\overline{Q_z^\gamma}{\subset \Omega'}$.
Further, for $\calL^n$-almost every choice of $\gamma\in B_{\sfrac\delta4}$
we have
\begin{equation}\label{eqmujbdryv}
\calH^{n-1}\left(J_u\cap \bigcup_{z\in A_\delta\cup A_\delta^*}\partial Q^\gamma_z\right)=0\quad\text{and}\quad \calH^{n-1}\left(\bigcup_{z\in {A_\delta}} (L_z\cap \partial Q_z^\gamma)\right)=0.
\end{equation}
This follows from the fact that $\calH^{n-1}\LL{J_u}$ and
$\calH^{n-1}\LL\bigcup_{z\in A_\delta}L_z$ are $\sigma$-finite.
In the rest of the proof $\gamma$ is a fixed value with property \eqref{eqmujbdryv} {and we write $Q_z$ in place of $Q_z^\gamma$}.

\itemtit{Approximation of the interface}\label{stepapproxif}

Let $\eps\in(0,\frac\delta2)$.
For every $z \in A_\delta$, we let
\begin{equation}\label{eqdeflzpiu}
 L_z^+:={x_z+}R_z\{(y',y_n): y'\in\R^{n-1},\ 
 y_n>\varphi_z(y')\},
\end{equation}
so that $L_z=\partial L_z^+$,
and then let $L_z^-:=\R^n\setminus L_z\setminus L_z^+$.
Fix a triangulation $(\mathcal T',V')$ of $\R^{n-1}$, 
with $V'=\eps\Z^{n-1}$, as in Lemma~\ref{lemmatriangRn}.
 We define $\psi_z:\R^{n-1}\to\R$ setting $\psi_z=\varphi_z$ on $V'$, and $\psi_z$ affine in each element of $\mathcal T'$.

 We stress that the triangulation 
 $(\mathcal T',V')$ used above to approximate $L_z$
 is not related to the triangulation 
 $(\mathcal T_{\eps,\zeta}, V_{\eps,\zeta})$ 
used in Proposition~\ref{propproj}
 for the definition of $\Pi_{\eps,\zeta}$.
 The usage of the same scale $\eps$ for both triangulations is only to avoid having one more small parameter. In any case, it would be crucial for both scales
to be much smaller than $\delta$.

  {We claim that there is a modulus of continuity}
 $\omega_\eps$,  infinitesimal as $\eps\downarrow 0$, {such that for all $z\in A_\delta$}
 we have
\begin{equation}\label{e:varphi psi}
\|\varphi_z-\psi_z\|_{L^\infty(\R^{n-1})}\le {\eps\omega_\eps} \mbox{ and } \|D\varphi_z-D\psi_z\|_{L^\infty(\R^{n-1})}\le {\omega_\eps}.
\end{equation}
In the following we 
shall assume that $\eps$ is sufficiently small to ensure
$\omega_\eps\le\theta$.
To prove \eqref{e:varphi psi}, we observe that
since $\varphi_z\in {C^1_c(\R^{n-1})}$ there is a modulus of continuity $\hat\omega_\eps$ such that
$|D\varphi_z(y)-D\varphi_z({\tilde y})|\le  \hat\omega_\eps$ whenever  $|y-{\tilde y}|\le \eps\sqrt n$. As there are finitely many choices of $z$, we can assume that $\hat\omega_\eps$ does not depend on $z$. Consider now an element $\tau'\in\mathcal T'$.
For every edge $(a,b)$ of $\tau'$
we have $D\psi_z|_{\tau'}(b-a)=\varphi_z(b)-\varphi_z(a)$, so that
\begin{equation*}
\left| (D\psi_z|_{\tau'}-
D\varphi_z(b))(b-a)\right|\le \hat\omega_\eps |b-a|
\end{equation*}
for all edges $(a,b)$ of $\tau'$.
As the simplexes $\tau'$ are uniformly nondegenerate this implies $|D\psi_z-D\varphi_z|\le C\hat\omega_\eps$, for
some constant $C$ depending only on $n$. This proves \eqref{e:varphi psi}.

Using this interpolation  {and} {a shift $\beta\in (-\eps,\eps)$ we define}
the set
\begin{equation}\label{eqdefHzp}
 H_z^+:={x_z+}R_z\{(y',y_n): y'\in\R^{n-1},\ 
 y_n>\psi_z(y'){+\beta}\},
\end{equation}
which is a polyhedral approximation of $L_z^+$,
{and} $H_z:=\partial H_z^+$, $H_z^-:=\R^n\setminus H_z\setminus H_z^+$
(see Figure~\ref{figgrids2}). {We choose $\beta$ such that}{
	\begin{equation}\label{eqmujbdryv2}
	\calH^{n-1}\left(\bigcup_{z\in {A_\delta}} (H_z\cap \partial Q_z)\right)=0.
	\end{equation}
Condition \eqref{eqmujbdryv2}, which {holds for almost all $\beta$}, will be needed to estimate the (unilateral) $\calH^{n-1}$-difference between the jump of the approximation and $J_u$, see {text after \eqref{eqHz} and the proof of \eqref{Jw-Ju2}.}}

\begin{figure}
 \begin{center}
  \includegraphics[width=5cm]{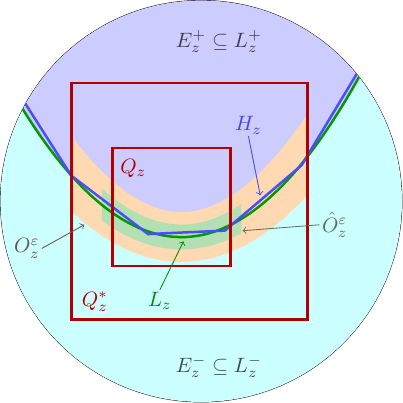}
 \end{center}
\caption{{Sketch of geometry in the construction of $U_z^+$ and  $U_z^-$ in Step~\ref{stepcostrwzzeta} in the proof of Theorem~\ref{theodensityintro}, assuming $n=2$, $R_z=\Id$, and that
$L_z$ is the graph of a parabola. The set $E_z^+$ is the area above $L_z$ (inside the  ball $B_z$), $O_z^\eps$ 
is a neighborhood of $L_z$ intersected with the larger cube $Q_z^*$, and 
$\hat O_z^\eps$ is a smaller neighborhood of $L_z$ intersected with a neighborhood of $Q_z$. The set $H_z$ is an approximation of $L_z$ with the graph of a piecewise affine function, and the part inside $Q_z$ belongs to $\hat O_z^\eps$.
Figure~\ref{figgrids2} shows how this construction interacts with the rest.}}
\label{figgrids3}
\end{figure}

\itemtit{Construction of $w_{z,\zeta}$ and $w_\zeta$}\label{stepcostrwzzeta}
{In this step we} define an approximation ${w_{z,\zeta}}$ on each cube $Q_z$, for $z\in A_\delta$ and $\zeta\in B_\eps$.
{This requires two different extensions of $u$ on different sets, a sketch is given in Figure~\ref{figgrids3}}.

If $L_z\cap Q_z^*=\emptyset$ we set $w_{z,\zeta}=\Pi_{\eps,\zeta} u$.
The other case is more complex.  We pick $y_z\in L_z\cap {Q_z^*}$,
let $B_z:=B_{3\sqrt n \delta}(y_z)$,
so that
 $Q_z^*\subset\subset B_z\subset \Omega'$,
and consider the sets $E_z^\pm:=B_z\cap L_z^\pm$.
One checks that,
 since $\varphi_z$ is $\theta$-Lipschitz {with $\theta\leq \frac12$,}
 the sets $E_z^\pm$ are
Lipschitz with a constant {$C$}. For this it is convenient to use that a bounded open set {$A$} is Lipschitz if and only if there are a nontrivial open one-sided cone ${E}$ and $r>0$ such that
$B_r(x)\cap (x+{E})\subset {A}$
and $B_r(x)\cap (x-{E})\cap {A}=\emptyset$
for all $x\in\partial {A}$, see \cite[Remark after Def.~2.60]{AFP}.

Let $M>0$ be fixed, {it will be chosen below depending only on the dimension $n$}.
By {Theorem}~\ref{propextension} there are $U^\pm_z\in SBV(\R^n;\R^m)$
which extend the restriction of $u$ to $E_z^\pm$, respectively. In particular, we have
$U^\pm_z=u$ on $E_z^\pm$,
and $|{DU^+_z}|(\partial E_z^+)=
 |{DU^-_z}|(\partial E_z^-)=0$.
For $\eps$ sufficiently small, letting
${O_z^\eps:= (L_z)_{2M\eps}\cap Q_z^*}$,
from $\bigcap_{\eps>0} O_z^\eps=L_z\cap Q_z^*
\subset \partial E_z^+$ we obtain
\begin{equation}\label{eqeUpm}
 E_{|\cdot|^p,g_0} [U^+_z, {{O_z^\eps}}]
+|DU^+_z| ({{O_z^\eps}})
{+ \|\nabla u\|_{L^p(O_z^\eps)}^p+
|Du|(O_z^\eps\setminus L_z)}
\le\frac{\delta^n\theta}{C_\lambda}
\end{equation}
{for all $z$,}
and the same for $U^-_z$.
If $\calH^{n-1}(J_u)<\infty$, then we also have {for $\eps>0$ small enough}
\begin{equation}\label{eqeUpm1}
{\sum_{z\in A_\delta}}
\calH^{n-1}(J_{U^+_z}\cap {O_z^\eps})
 + \calH^{n-1}(J_{U^-_z}\cap  O_z^\eps)
\le {C\theta.}
\end{equation}
{The function $w_{z,\zeta}$ will be defined as a discretization of $U_z^+$ on
$H_z^+\cap Q_z$, and similarly with the other sign.
By Proposition~\ref{propproj}\ref{propprojlocal} it depends on
$U_z^\pm$ on a small neighborhood of the
sets $H_z^\pm\cap Q_z$, and by
Proposition~\ref{propproj}\ref{propprojbounds} the relevant properties of $w_{z,\zeta}$  can be estimated by
corresponding properties of
$U_z^\pm$ on $2\cpropjbounds\eps$-neighborhoods of
$H_z^\pm\cap Q_z$ (see also~\eqref{e:stima bulkomega}). Therefore we  need to estimate $U^\pm_z$ on these sets.

Let $\hat O_z^\eps:=(L_z)_{M\eps}\cap (Q_z)_{M\eps}$.
From
\eqref{eqdeflzpiu},
\eqref{e:varphi psi} and \eqref{eqdefHzp} we obtain
$H_z\subseteq (L_z)_{2\eps}$ and
$(H_z^+)_{2\cpropjbounds\eps}
\subseteq L_z^+\cup (L_z)_{(2+2\cpropjbounds)\eps}$, which imply
\begin{equation}\label{eqHQzceps}
(H_z^+\cap Q_z)_{2\cpropjbounds\eps}\subseteq (L_z^+\cap Q_z^*)\cup \hat O_z^\eps
\end{equation}
and the same for the other sign,
provided that $M\ge 2+2\cpropjbounds$ and $\eps$ is sufficiently small.}
Recalling \eqref{e:approx scala delta}, {\eqref{eqeUpm}} in particular implies
\begin{equation}\label{eqestjumpUpm}
\begin{split}
& \sum_{z\in A_\delta}
 \int_{J_{U_z^+}\cap {(H_z^+\cap Q_z)_{2\cpropjbounds\eps}}} g_0(|[U_z^+]|)d\calH^{n-1}\\
 &+\sum_{z\in A_\delta}
 \int_{J_{U_z^-}\cap {(H_z^-\cap Q_z)_{2\cpropjbounds\eps}}} g_0(|[U_z^-]|)d\calH^{n-1}\le C\theta
\end{split}
 \end{equation}
and
\begin{equation}
 \sum_{z\in A_\delta}
 \int_{{(H_z^+\cap Q_z)_{2\cpropjbounds\eps}}}
 |\nabla U_z^+-\eta_z|^p dx
+\int_{{(H_z^-\cap Q_z)_{2\cpropjbounds\eps}}}
|\nabla U_z^--\eta_z|^p dx
 \le C \theta.
\end{equation}

We next estimate the difference between $u$, $U^+_z$ and $U^-_z$ around $H_z$, this will be important after \eqref{eqIz2area} (cf. \eqref{e:Iz2Bi}-\eqref{e:Iz2Bi second line}).
We pick $x_1,\dots, x_K\in L_z$ such that
\begin{equation}\label{eqdefBi}
L_z\cap Q_z\subseteq\bigcup_i B_i, \hskip 1cm
 B_i:=B_{M\eps}(x_i),
\end{equation}
and
\begin{equation}\label{eqdefBi2}
\hat O_z^\eps\subseteq\bigcup_i B_i^*, \hskip 1cm
 B_i^*:=B_{2M\eps}(x_i).
\end{equation}
{This implies in particular $K\leq \frac{{C}{\delta^{n-1}}}{\eps^{n-1}},$ for a constant ${C}$.}
We can pick the points so that
the larger balls $B_i^*$ have finite overlap uniformly in $\eps$,
namely $\sum_{i=1}^K\chi_{B_i^*}\leq {C}$, and that they are all contained in $(Q_z)_{3M\eps}$.
For $\eps$ sufficiently small, $B_i^*\subseteq O_z^\eps$.
As $\varphi_z$ is ${\frac{1}{2}}$-Lipschitz, the sets $B_i^*\cap L_z^+$ and $B_i^*\cap L_z^-$ are uniformly Lipschitz, therefore there is a constant {$C$} such that for any $i$ there are $h^+_i$, $h^-_i\in\R^m$ with
\begin{equation}\label{equhiplbilp}
 \frac1\eps \|u-h^+_i\|_{L^1(B_i^*\cap L_z^+)}
 +\|Tu-h^+_i\|_{L^1(B_i^*\cap L_z;\calH^{n-1})}
 \le C |Du|(B_i^*\cap L_z^+).
\end{equation}
In \eqref{equhiplbilp} we write $Tu$ for the inner trace on the boundary of $B_i^*\cap L_z^+$, which on $B_i^*\cap L_z$ coincides with $u^+$.
The corresponding estimate holds with the other sign (then with $Tu=u^-$).
This in particular implies
\begin{equation}\label{eqestbilzuuhh}
 \int_{B_i^*\cap L_z}|[u]-(h_i^+-h_i^-)|d\calH^{n-1}
\le   C  |Du|(B_i^*\setminus L_z).
\end{equation}

We observe that $\Lip(\varphi_z)\le{\frac{1}{2}}$ also implies for some constant {$C$}
\begin{equation}\label{eqvolbistlz}
 \frac{\eps^n}{{C}}\le |B_i^*\cap L_z^+|,\hskip5mm
 \frac{\eps^n}{{C}}\le |B_i^*\cap L_z^-|,
\end{equation}
as well as
\begin{equation}\label{eqestimateBiLzHz}
 \frac{\eps^{n-1}}{{C}}\le \calH^{n-1}(B_i\cap L_z)\le {C}\eps^{n-1},\hskip3mm
 \frac{\eps^{n-1}}{{C}}\le \calH^{n-1}(B_i\cap H_z)\le {C}\eps^{n-1}.
\end{equation}
By Poincar\'e's inequality applied to $U_z^+$ on $B_i^*$, using $u=U^+_z$ on $B_i^*\cap L_z^+$, \eqref{equhiplbilp} and \eqref{eqvolbistlz},
\begin{equation}\label{eqpointUpm}
 \|U^{{+}}_z-h_i^{{+}}\|_{L^1(B_i^*)}
 \le C \eps |DU^{{+}}_z|(B_i^*)
\end{equation}
and analogously for $U_z^-$ and $h_i^-$,
so that
\begin{equation}\label{eqUzpUzmhhfd}
 \int_{B_i^*}|U_z^+-U_z^--h_i^++h_i^-|dx
 \le C\eps ( |DU^+_z|+|DU^-_z|)(B_i^*).
\end{equation}
Finally, a direct application of Poincar\'e's inequality to $U^+_z-u$ on $B_i^*$, {using $u=U^+_z$ on $B_i^*\cap L_z^+$ and \eqref{eqvolbistlz},} leads to
\begin{equation}\label{eqUzpUzmhhfdUu}
 \int_{B_i^*}|U_z^+-u|dx
 \le C\eps ( |DU^+_z|+|Du|)(B_i^*),
\end{equation}
obviously the same holds for $U^-_z$.
Summing \eqref{eqUzpUzmhhfdUu} over all balls shows that
\begin{equation}\label{eql1distance}
 \|U_z^+-u\|_{L^1({\hat O_z^\eps})} \le C\eps (|DU^+_z|+|Du|)(O_z^\eps)
\end{equation}
and the same for $U_z^-$. Summing instead \eqref{eqUzpUzmhhfdUu} only over the balls with centers contained in ${(\partial Q_z)_{3 M\eps}\cap L_z}$ gives
\begin{equation}\label{eqUpUmsz2}
 \|U_z^+-u\|_{L^1((\partial Q_z)_{M\eps}\cap (L_z)_{M\eps})}
 \le C \eps {\left(|DU_z^+|
 +|Du|\right)
 ((\partial Q_z)_{5M\eps}\cap O_z^\eps)},
\end{equation}
and the same bound for $U_z^-$.

For $z\in A_\delta$ with $L_z\cap {Q_z^*}\neq\emptyset$ we define
$w_{z,\zeta}:{\R^n}\to\R^m$ by
\begin{equation}\label{e:wzzeta}
 w_{z,\zeta}:=\begin{cases}
\Pi_{\eps,\zeta} U^+_z&\text{ in }
{H_z^+},\\
\Pi_{\eps,\zeta} U^-_z&\text{ in }
 {H_z^-,}
\end{cases}
\end{equation}
we recall that if instead $L_z\cap {Q_z^*}=\emptyset$ we
had set  $w_{z,\zeta}=
\Pi_{\eps,\zeta} u$.
In both cases, the
function $w_{z,\zeta}$ is piecewise affine.
For almost all $\zeta\in B_\eps$ we have
\begin{equation}\label{eqHz}
{\calH^{n-1}(J_{\Pi_{\eps,\zeta} f}\cap H_z)=
\calH^{n-1}(J_{\Pi_{\eps,\zeta} f}\cap \partial Q_z)=0}
\end{equation} 
for any function $f$, and in particular for $U^+_z$ and $U^-_z$. With this choice, and recalling \eqref{eqmujbdryv2},
$J_{w_{z,\zeta}}\cap \overline Q_z$ splits (up to $\calH^{n-1}$-null sets) into the disjoint union of
$J_{\Pi_{\eps,\zeta} U^+_z}\cap H_z^+\cap  Q_z$,
$J_{\Pi_{\eps,\zeta} U^-_z}\cap H_z^-\cap Q_z$,
and a subset of $H_z\cap  Q_z$,
 with
\begin{equation}\label{eqsaltowzzeta}
[w_{z,\zeta}]=
\Pi_{\eps,\zeta} U^+_z
-\Pi_{\eps,\zeta} U^-_z, \hskip4mm \text{$\calH^{n-1}$-a.e. on $H_z\cap Q_z$.}
\end{equation}
{
By \eqref{e:varphi psi} and the fact that $\varphi_z$ is $\theta$-Lipschitz we also obtain
\begin{equation}\label{eqnuwzeta}
     |\nu_{w_{z,\zeta}}-R_ze_n|\le C\theta
     \hskip5mm
     \text{$\calH^{n-1}$-a.e.  on } Q^*_z\cap H_z\cap J_{w_{z,\zeta}}.
    \end{equation}}

If $\calH^{n-1}(J_u)<\infty$ then using \eqref{e:stima superficiearea}
separately on $U^+_z$ and $U^-_z$ (cf. the discussion to get \eqref{e:stima bulkomega}), then \eqref{eqHQzceps},
$U^+=u$ on $L_z^+\cap Q_z$, and finally
\eqref{e:approx scala delta02} and \eqref{eqeUpm1} we obtain
\begin{equation}\label{eqjumpnotH}
\begin{split}
 &\strokedint_{B_\eps} \sum_{z\in A_\delta}
 \calH^{n-1}(\overline Q_z\cap H^+_z\cap
 J_{\Pi_{\eps,\zeta} U^+_z}
 )+\calH^{n-1}(\overline Q_z\cap H^-_z\cap
 J_{\Pi_{\eps,\zeta} U^-_z}
 )d\zeta\\
&\le  C\sum_{z\in A_\delta}
 \calH^{n-1}(Q_z^*\cap
 J_{u}\setminus L_z
 )+
\calH^{n-1}( O_z^\eps\cap
 J_{U^+_z}
 )+\calH^{n-1}(O_z^\eps\cap
 J_{U^-_z} )
 \le C\theta.
 \end{split}
\end{equation}

{We define
$w^0_\zeta:=\Pi_{\eps,\zeta} u$ and then}
$w_\zeta\in SBV(\bigcup_{z\in {A_\delta\cup A_\delta^*}}Q_z;\R^m)$ by setting
 $w_\zeta:={w_{z,\zeta}}$ on $Q_z$ if $z\in A_\delta$, and $w_\zeta:=w^0_\zeta$ if $z\in A_\delta^*$. For any $\zeta\in B_\eps$, the function $w_\zeta$ is piecewise affine
 {and obeys property~\ref{theodensityintrosimpl}.} This concludes the construction of $w_\zeta$.

\itemtit{Estimates on $w_\zeta$ and $\nabla w_\zeta$}

We first check that we have not added too much jump on the boundary between
adjacent cubes by replacing $w^0_{\zeta}$ by $w_{z,\zeta}$ {and compute
with \eqref{eqmujbdryv2} and \eqref{eqHz}}
\begin{equation}\begin{split}
 \int_{\partial Q_z} |w^0_\zeta-w_{z,\zeta}| d\calH^{n-1}=
&
 { \int_{\partial Q_z\cap H_z^+} |\Pi_{\eps,\zeta} u-\Pi_{\eps,\zeta}U^+_z| d\calH^{n-1}}\\
& +
{  \int_{\partial Q_z\cap H_z^-} |\Pi_{\eps,\zeta} u-\Pi_{\eps,\zeta}U^-_z| d\calH^{n-1}}
 \end{split}\end{equation}
for $z\in A_\delta$.
By estimate \eqref{eqpsurface} with 
$\Sigma=\partial Q_z\cap H_z^+$,
\begin{equation}\begin{split}
 \strokedint_{B_\eps}\int_{\partial Q_z\cap H_z^+} |w^0_\zeta-w_{z,\zeta}| d\calH^{n-1}d\zeta
\le &
\frac{C}{\eps}
\|u-U^+_z\|_{L^1((\partial Q_z\cap H_z^+)_{\cpropjbounds\eps})} \\
&+ 
C   |D(u-U^+_z)|( (\partial Q_z\cap H_z^+)_{2\cpropjbounds\eps}).
\end{split}\end{equation}
Recalling that $U_z^+=u$ on $E_z^+$
and \eqref{eqHQzceps},
 both domains can be restricted to $(\partial Q_z)_{M\eps} \cap (L_z)_{M\eps}$. The first term is estimated by 
\eqref{eqUpUmsz2}, and we conclude
\begin{equation}
 \strokedint_{B_\eps}\int_{\partial Q_z} |w^0_\zeta-w_{z,\zeta}| d\calH^{n-1}d\zeta
\le   
C   (|Du|+|DU^+_z|+|DU^-_z|)( (\partial Q_z)_{5M\eps}\cap  O_z^\eps)
\end{equation}
so that, using \eqref{eqmujbdryv},
\begin{equation}\label{eqdiff}
\limsup_{\eps\to0}\sum_{z\in A_\delta}
\strokedint_{B_\eps}
\int_{\partial Q_z} |w^0_\zeta-w_{z,\zeta}| d\calH^{n-1}d\zeta{=0}.
\end{equation}

We next address the $L^1$ convergence.
For any $z\in A_\delta$ {for which $L_z\cap {Q_z^*}\neq \emptyset$,} by the definition of $w_\zeta$ and Proposition~\ref{propproj}\ref{propprojaffine},
we have for {$\calL^n$}-almost every $\zeta{\in B_\eps}$
\begin{equation}\begin{split}\label{stimaL1}
 \int_{Q_z} |w_\zeta-u|dx
 =&  \int_{Q_z\cap H_z^+}  |\Pi_{\eps,\zeta} U_z^+-u|dx+\int_{Q_z\cap H_z^-}  |\Pi_{\eps,\zeta} U_z^--u|dx\\
\le &  {\int_{Q_z\cap H_z^+}  \left(|\Pi_{\eps,\zeta} U_z^+-U_z^+|
+| U_z^+-u|\right)}
dx\\
 &+
{\int_{Q_z\cap H_z^-}  \left(|\Pi_{\eps,\zeta} U_z^--U_z^-|
+| U_z^--u|\right)
dx.}
 \end{split}
 \end{equation}
{We recall that $U^+_z=u$ on $Q_z\cap H_z^+\setminus \hat O_z^\eps\subseteq E_z^+$
and \eqref{eql1distance} to obtain
\begin{equation}\begin{split}
\int_{Q_z\cap H_z^+}  
| U_z^+-u|dx
\le &
 \| U_z^+-u\|_{L^1(\hat O_z^\eps)} \\
 \le& C\eps 
 (|DU^+_z|+|DU^-_z|+|Du|)(O_z^\eps).
\end{split}\end{equation}
With
\eqref{eqnorm1} in Proposition~\ref{propproj},}
\begin{equation}\label{eqPiUzUz}
{\begin{split}
\strokedint_{B_\eps}\int_{Q_z\cap H_z^+}  |\Pi_{\eps,\zeta} U_z^+-U_z^+|
dx\,d\zeta
&\le C\eps |DU_z^+|((Q_z\cap H_z^+)_{2\cpropjbounds\eps})\\
&\le C\eps (|Du|(Q_z^*) + |DU_z^+|(O_z^\eps))
\end{split}}\end{equation}
{and an analogous estimate holds for the term with the other sign.}
{Recalling \eqref{eqeUpm} we obtain}
 \begin{equation}
{ \sum_{z\in A_\delta:\,L_z\cap Q_z\neq\emptyset}}
\strokedint_{B_\eps}\int_{{Q_z}}| w_\zeta- u|dx\,d\zeta
\leq  C\theta
\end{equation}
for $\eps$ sufficiently small.
If $L_z\cap Q_z^*=\emptyset$ or $z\in A_\delta^*$ the
computation is simpler as $w_\zeta=w_\zeta^0=\Pi_{\eps,\zeta}u$,
and only \eqref{eqPiUzUz} with $u$ in place of $U^+_z$ appears. From this we conclude that
\begin{equation}\label{eqnorm2}
\strokedint_{B_\eps}\int_\Omega| w_\zeta- u|dx\,d\zeta\leq C\theta.
\end{equation}
 
Moreover, from {$w_\zeta=w_\zeta^0$ on
$Q_z\setminus \hat O_z^\eps$}, from \eqref{eqeUpm} and estimate \eqref{e:stima bulk} in Proposition~\ref{propproj}\ref{propprojbounds} we deduce
\begin{align}\label{e:energia bulk w} 
&\strokedint_{B_\eps}\int_\Omega|\nabla w_\zeta-\nabla u|^pdx\,d\zeta\notag\\
&\leq {C}
\strokedint_{B_\eps}\sum_{z\in A_\delta{\cup A_\delta^*}}{\Big(}\int_{Q_z}|{\nabla w_{\zeta}^0}-\nabla u|^pdx{+\int_{{\hat O_z^\eps}}(|\nabla w_\zeta|^p+|\nabla u|^p)dx\Big)}d\zeta
\notag\\
&\leq C\sum_{z\in A_\delta{\cup A_\delta^*}}\int_{(Q_z)_{2\cpropjbounds\eps}} |\nabla u-\eta_z|^p dx{+C\theta}\leq C\theta,
\end{align}
where the last estimate follows from {\eqref{e:approx scala delta}}, and the constant
$C>0$ depends {on $u$,} on the dimension $n$, {on $p$ and on $\Omega$}.

\itemtit{Definition of the deformation $\Phi$}

We select $\delta'\in(0,\delta)$ and define $\Phi:\R^n\to\R^n$ by
\begin{equation}\label{e:def Phi bilip}
\Phi(x):=x+\sum_{z\in A_\delta: {L_z\cap Q_z^*\ne\emptyset}}
    \alpha_z(x)R_z\left(\psi_z({(R_z^T{(x-x_z)})'})+{\beta}-\varphi_z({(R_z^T{(x-x_z)})'})\right) e_n,
\end{equation}
where $(R_z^Tx)'$ denotes the first $n-1$ components of the vector $R_z^Tx$ and
we fixed a function $\alpha_z\in C^\infty_c(Q_z;[0,1])$ with $\alpha_z=1$ on
$Q_z'':=z+\gamma+(-\frac{\delta+\delta'}4,\frac{\delta+\delta'}4)^n$,
$|\Dnabla\alpha_z|\le 6/(\delta-\delta')$.
The map $\Phi$ is Lipschitz since $\alpha_z$, $\psi_z$, $\varphi_z$ are, by
definition $\Phi(x)=x$ for $x\not\in\Omega$, by \eqref{e:varphi psi}
$|\Phi(x)-x|\le {2\eps}$ for all $x$, we can assume ${2\eps}\le (\delta-\delta')/4$.
For $\eps$ sufficiently small, $\Phi(Q_z)=Q_z$ for all $z\in\delta\mathbb{Z}^n$ by construction, and if
we define $Q_z':=z+\gamma+(-\frac{\delta'}2,\frac{\delta'}2)^n\subset\subset
Q_z''\subset\subset Q_z$, from \eqref{eqdefLz}, \eqref{e:varphi psi} and \eqref{eqdefHzp} 
we obtain
\begin{equation}\label{eqHQPhiL0}
 H_z{\cap Q_z'}=\Phi(L_z){\cap Q_z'}, {\text{ and }}{
 \Phi^{-1}(H_z){\cap Q_z'}=L_z{\cap Q_z'}.}
\end{equation}
In order to show that $\Phi$ is invertible with Lipschitz inverse it suffices to prove that {$\Dnabla\Phi$ is uniformly close to the identity.} Indeed, using \eqref{e:varphi psi}
we obtain
\begin{align}
 \|\Dnabla\Phi-\Id\|_\infty\le& \max_{z\in A_\delta} \|\Dnabla\psi_z-\Dnabla\varphi_z\|_\infty
 + \max_{z\in A_\delta} \|\Dnabla\alpha_z\|_\infty
 (\|\psi_z-\varphi_z\|_\infty{+{|\beta|}})
\nonumber \\
\le& \omega_\eps +  {\frac{{12\eps}}{\delta-\delta'}}\,.
 \label{eqboundDPhi0}
\end{align}
 In particular, if $\eps$ is sufficiently small
 on a scale depending on $\delta$ and 
 $\delta'$, we can ensure
\begin{equation}\label{eqboundDPhi}
 \|\Dnabla\Phi-\Id\|_\infty\le {\frac12}\theta.
\end{equation}
Therefore,
\[(1-{\frac12}\theta)|x-y|\leq|\Phi(x)-\Phi(y)|\leq (1+\theta)|x-y|,\]
which implies that $\Phi$ is globally bilipschitz.
{Property~\ref{theodensityintroLp} follows.}

\itemtit{Estimate of the jump energy}

We are now able to estimate the energy of the jump contribution. We start to decompose it as 
 \begin{align}\label{eqsaltoPhi}
\strokedint_{B_\eps}&\int_{\Omega\cap(J_u\cup \Phi^{-1}(J_{w_\zeta}))}g_0(|[u]-[w_{\zeta}]\circ\Phi|)\,d\calH^{n-1}d\zeta\notag\\
\le&\strokedint_{B_\eps}
\sum_{z\in A_\delta\cup A_\delta^*}
\int_{(J_u\cup\Phi^{-1}(J_{w_\zeta}))\cap Q_z}g_0(|[u]-{[w_{\zeta}]}\circ\Phi|)\,d\calH^{n-1}d\zeta\notag\\
 &+\strokedint_{B_\eps}\sum_{z\in A_\delta}\int_{\partial Q_z}g_0(|w_{z,\zeta}-w^0_\zeta|)d\calH^{n-1}d\zeta
\end{align}
where we
separated the boundary contributions from the ones in the interior, then used $\Phi(x)=x$ on $\partial Q_z$, 
\eqref{eqmujbdryv} to infer that $[u]=0$ almost everywhere on $\partial Q_z$, {\eqref{eqHz} to infer $[w_{z,\zeta}]=[w_z^0]=0$ almost everywhere on $\partial Q_z$,}
and finally used
$|[w_\zeta]|\le |w_{z,\zeta}-w_\zeta^0|+
|w_{z',\zeta}-w_\zeta^0|$ and subadditivity of $g_0$
on $\partial Q_z\cap \partial Q_{z'}$ for
$z,z'\in A_\delta$, 
$|[w_\zeta]|= |w_{z,\zeta}-w_\zeta^0|$
on $\partial Q_z\cap \partial Q_{z'}$ for
$z\in A_\delta$, $z'\in A_\delta^*$, 
and $|[w_\zeta]|=0$ on 
 $\partial Q_z\cap \partial Q_{z'}$ for
$z, z'\in A_\delta^*$.

We start from the boundary term. From \eqref{eqg0slambdapff} we get
\begin{multline}\label{eqbor}
\strokedint_{B_\eps}\sum_{z\in A_\delta}\int_{\partial Q_z}g_0(|w_{z,\zeta}-w^0_\zeta|)d\calH^{n-1}d\zeta\\
\leq
 \# A_\delta \calH^{n-1}(\partial Q_z)
\lambda + C_\lambda 
{\strokedint_{B_\eps}}\sum_{z\in A_\delta}
\int_{\partial Q_z} |w_{z,\zeta}-w_\zeta^0| d
\calH^{n-1}{d\zeta} \leq {C\frac\lambda\delta } + {C}\theta,
\end{multline}
by \eqref{eqdiff} and choosing $\eps$ sufficiently small. {For $\lambda\le \delta\theta$} the entire term is bounded by $C\theta$.

We now turn to the {first term of \eqref{eqsaltoPhi}}. We start from $z\in A_\delta$.
{Splitting
\begin{align*}
J_u\cup \Phi^{-1}(J_{w_\zeta}) =&
(J_u\cup \Phi^{-1}(J_{w_\zeta}))\cap (L_z\cup \Phi^{-1}(H_z))\\
&\cup (J_u\cup \Phi^{-1}(J_{w_\zeta}))\setminus (L_z\cup \Phi^{-1}(H_z))
\end{align*}
and using the subadditivity of $g_0$ to estimate the integral on the second set, we get
}
\newcommand\I{\mathrm{I}}
\newcommand\II{\mathrm{II}}
\newcommand\III{\mathrm{III}}
\newcommand\IV{\mathrm{IV}}
\newcommand\V{\mathrm{V}}
\begin{equation}
\strokedint_{B_\eps} \int_{(J_u\cup\Phi^{-1}(J_{w_\zeta}))\cap Q_z}g_0(|[u]-{[w_{\zeta}]}\circ\Phi|)\,d\calH^{n-1}\,d\zeta
 \le \I_z+\II_z+\III_z
  \end{equation}
with
\begin{equation}
\begin{split}
 \I_z:=&
  \strokedint_{B_\eps}
 \int_{(L_z\cup\Phi^{-1}(H_z))\cap Q_z}g_0(|[u]-[w_{\zeta}]\circ\Phi|)\,d\calH^{n-1} d\zeta,\\
 \II_z:=&
 \int_{J_u\cap Q_z
 \setminus  L_z}g_0(|[u]|)\,d\calH^{n-1},
 \\
 \III_z:=&
 \strokedint_{B_\eps}
 \int_{\Phi^{-1}(J_{w_\zeta}\setminus H_z)\cap Q_z
 }g_0(|[w_{\zeta}]\circ\Phi|)\,d\calH^{n-1}\,d\zeta.
\end{split}
\end{equation}
We start from $\I_z$. We add and subtract $s_z$, write
\begin{equation}\begin{split}
\I_z\le & \strokedint_{B_\eps}
 \int_{(L_z\cup\Phi^{-1}(H_z))\cap Q_z}g_0(|[u]-s_z{|})+
 g_0(|[w_{\zeta}]\circ\Phi-s_z|)
 \,d\calH^{n-1} d\zeta
 \end{split}
\end{equation}
{and observe that
\begin{equation*}
 g_0(|[u]-s_z|)
\chi_{(\Phi^{-1}(H_z)\setminus L_z)\cap Q_z} \le
 g_0(|[u]|)\chi_{(J_u\setminus L_z)\cap Q_z}
 +g_0(|s_z|)\chi_{(\Phi^{-1}(H_z)\setminus L_z)\cap Q_z},
\end{equation*}
and similarly for the other term. Therefore}
\begin{equation}\begin{split}
\I_z\le & \I_z^1+\I_z^2+\I_z^3 {+\II_z+\III_z},
 \end{split}
\end{equation}
where
\begin{equation}
  \I_z^1:=\int_{L_z\cap Q_z^*}g_0(|[u]-s_z|)\,d\calH^{n-1},
\end{equation}
\begin{equation}
  \I_z^2:=
   \strokedint_{B_\eps}
  \int_{\Phi^{-1}(H_z)\cap Q_z}g_0(|[w_{\zeta}]\circ\Phi-s_z|)\,d\calH^{n-1}d\zeta,
\end{equation}
and
\begin{equation}
  \I_z^3:=
   g_0(|s_z|)
  \calH^{n-1}( (L_z\triangle \Phi^{-1}(H_z))\cap Q_z).
\end{equation}
{First note that
\begin{equation}\label{eqI1zIIz}
\sum_{z\in A_\delta}(\I^1_z+\II_z)\leq C\theta
\end{equation}
thanks to} \eqref{e:approx scala delta}. For $\I^2_z$ we use
first the Coarea formula, \eqref{eqboundDPhi0}
and \eqref{eqsaltowzzeta}
to obtain
\begin{equation}\label{eqIz2area}
\begin{split}
  \I_z^2\le& 2 \strokedint_{B_\eps}
\int_{H_z\cap Q_z}g_0( |\Pi_{\eps,\zeta}(U_z^+-U_z^-)-s_z| )\,d\calH^{n-1}d\zeta.
\end{split}\end{equation}
We cover $H_z\cap Q_z$ with the balls $B_i$ introduced in \eqref{eqdefBi}, and start from estimating the term
\begin{equation}
 \I_z^2(B_i):= \strokedint_{B_\eps}
\int_{H_z\cap B_i}g_0( |\Pi_{\eps,\zeta}(U_z^+-U_z^-)-s_z| )\,d\calH^{n-1}d\zeta.
\end{equation}
 By subadditivity,
\begin{equation}\label{eqsubpiii}
g_0( |\Pi_{\eps,\zeta}(U_z^+-U_z^-)-s_z| )
\le  g_0(|h_i^+-h_i^--s_z|)+
g_0( |\Pi_{\eps,\zeta}(U_z^+-U_z^-)-h_i^++h_i^-| ).
\end{equation}
The first term, using
 \eqref{eqestimateBiLzHz} twice and subadditivity, leads to
\begin{equation}\label{eqsubpiii2}\begin{split}
 &\int_{H_z\cap B_i}
 g_0(|h_i^+-h_i^--s_z|)d\calH^{n-1}
 \le {C} \eps^{n-1}g_0(|h_i^+-h_i^--s_z|)\\
 &\le {C} \int_{L_z\cap B_i}
[g_0(|[u]-s_z|)+ g_0(|h_i^+-h_i^--[u]|)]d\calH^{n-1},
\end{split}
\end{equation}
where the first integral is controlled by $\I_z^1$.
Using \eqref{eqg0slambdapff} in the
second term of \eqref{eqsubpiii} and the second term of \eqref{eqsubpiii2},
for any $\lambda>0$
we have
\begin{equation}\label{e:Iz2Bi}\begin{split}
  \I_z^2(B_i)\le& {C} \int_{L_z\cap B_i}
g_0(|[u]-s_z|)d\calH^{n-1}+{C}\lambda \eps^{n-1}\\
&+C_\lambda
\int_{L_z\cap B_i} |h_i^+-h_i^--[u]|\,d\calH^{n-1}\\
& + C_\lambda
 \strokedint_{B_\eps}
\int_{H_z\cap B_i} |\Pi_{\eps,\zeta}(U_z^+-U_z^--h_i^++h_i^-) |\,d\calH^{n-1}\,d\zeta.
\end{split}\end{equation}
The term  in the second line can be estimated with \eqref{eqestbilzuuhh}. For the last line we use first \eqref{eqpsurface} and then
\eqref{eqUzpUzmhhfd}, and obtain
\begin{equation}\label{e:Iz2Bi second line}\begin{split}&
\strokedint_{B_\eps}
\int_{H_z\cap B_i}
 |\Pi_{\eps,\zeta}(U_z^+-U_z^--h_i^++h_i^-)| d\calH^{n-1}\,d\zeta\\
&
\le \frac{C}{\eps}
\int_{B_i^*}|U_z^+-U_z^--h_i^++h_i^-|dx
+C (|DU_z^+|+|DU_z^-|)(B_i^*)\\
&\le
C (|DU_z^+|+|DU_z^-|)(B_i^*).
\end{split}\end{equation}
Using that  $\Lip(\varphi_z)\le{\frac{1}{2}}$, \eqref{eqestimateBiLzHz} and $\sum_{i=1}^K\chi_{B_i^*}\leq {C}$, summing over $i$ yields
\begin{equation}\begin{split}
  \I_z^2\le&
{C}\I^1_z+C\lambda \delta^{n-1}
  +{C_\lambda}|Du|(O_z^\eps\setminus L_z)
  + {C_\lambda} (|DU_z^+|+|DU_z^-|)(O_z^\eps),
\end{split}\end{equation}
so that by \eqref{eqeUpm} summing on $z\in A_\delta$ we find for $\lambda\le\delta \theta $
and $\eps$ sufficiently small
\[
\sum_{z\in A_\delta}I_z^2\leq C\theta\,.
\]
We next turn to $\I_z^3$, and observe that by \eqref{eqHQPhiL0}
{and \eqref{eqboundDPhi0}}
\begin{equation}\begin{split}
  \calH^{n-1}( (L_z\triangle \Phi^{-1}(H_z))\cap Q_z)\le& 
  \calH^{n-1}( L_z\cap Q_z\setminus Q_z')
+  \calH^{n-1}( \Phi^{-1}(H_z)\cap Q_z\setminus Q_z')\\
\le& 
  \calH^{n-1}( L_z\cap Q_z\setminus Q_z')
+ 2 \calH^{n-1}( H_z\cap Q_z\setminus Q_z').
\end{split}\end{equation}
Therefore
\begin{equation*}
 \I_z^3
 \le {4} \max_{z\in A_\delta} g_0(|s_z|) \calH^{n-1}\left((H_z\cup L_z)\cap (Q_z\setminus Q_z')\right).
\end{equation*}
As $\calH^{n-1}(\bigcup_{z\in A_\delta} (H_z\cup L_z){\cap Q_z})<\infty$ and
$\bigcup_{\delta'<\delta} Q_z'=Q_z$,
choosing $\delta'$ sufficiently close to $\delta$ we have
\begin{equation}\label{eqchoicedeltaprime}
{\sum_{z\in A_\delta} \calH^{n-1}\left((H_z\cup L_z)\cap (Q_z\setminus Q_z')\right)\le\theta\text{ and }}
 \sum_{z\in A_\delta} \I_z^3\le  \theta.
\end{equation}
Similarly, if $\calH^{n-1}(J_u)<\infty$, for $\delta'$ sufficiently close to $\delta$ we have
\begin{equation}\label{eqsaltopiccolo}
\calH^{n-1}(J_u\cap\bigcup_{z\in A_\delta} (Q_z\setminus {Q_z'}))\leq \theta.
\end{equation}

For $\III_z$ we use the Coarea formula {and \eqref{eqboundDPhi0}}. We obtain
\begin{equation}
 \III_z\le 
2\strokedint_{B_\eps}
  \int_{J_{w_{{z,\zeta}}}\cap Q_z {\setminus H_z }}
 g_0(|[w_{z,\zeta}]|)\,d\calH^{n-1}\,d\zeta. \end{equation}  
Therefore $\III_z\le 2\III_z^++2\III_z^-$, with
\begin{equation}
 \III_z^+:=\strokedint_{B_\eps}
  \int_{J_{\Pi_{\eps,\zeta} U_z^+}\cap Q_z\cap H_z^+}
{ g_0(|[\Pi_{\eps,\zeta} U_z^+]|)}
 \,d\calH^{n-1}\,d\zeta, \end{equation}
 {and similarly for $\III_z^-$.}
 We use {inequalities \eqref{e:stima superficie}}
 {to infer}
 \begin{align}\label{e:stima g_0.1}
\sum_{z\in A_\delta}\III_z^+\leq& 
 C\sum_{z\in A_\delta}\int_{J_{U_z^+}\cap (H_z^{{+}}\cap Q_z)_{2\cpropjbounds\eps}} g_0(|[U_z^+]|)\,d\calH^{n-1},
\end{align}
and the same for $\III_z^-$.
{Both}
can be estimated via \eqref{eqestjumpUpm},
{we conclude that}
\begin{equation}\label{eqestiiiifin}
 \sum_{z\in A_\delta}
 \III_z \le C\theta.
\end{equation}

{We} next treat {the bulk term in \eqref{eqsaltoPhi}} {in the case} $z\in A_\delta^*$. Since $\Phi(x)=x$ and ${w_\zeta}={w_\zeta^0}$ on $Q_z$, {recalling Proposition \ref{propproj} \ref{propprojjump},}
\begin{equation}
\strokedint_{B_\eps} \int_{(J_u\cup J_{w_\zeta^0})\cap Q_z}g_0(|[u]-{[w_{\zeta}^0]}{|})\,d\calH^{n-1}\,d\zeta
 \le \IV_z+\V_z,
  \end{equation}
where
\begin{equation}
\begin{split}
 \IV_z:=&
 \int_{J_u\cap Q_z
 }g_0(|[u]|)\,d\calH^{n-1},
 \\
 \V_z:=&
 \strokedint_{B_\eps}
 \int_{J_{w_\zeta^0}\cap Q_z 
 }g_0(|[w_\zeta^0]|)\,d\calH^{n-1}\,d\zeta.
\end{split}
\end{equation}
As for $\III_z^{{+}}$,
 we use inequality \eqref{e:stima superficie} in Proposition~\ref{propproj}
 and obtain
\begin{equation}
 \V_z\le
 C
 \int_{J_{u}\cap (Q_z)_{2\cpropjbounds\eps}
 }g_0(|[u]|)\,d\calH^{n-1}
\end{equation}
 so that
 \begin{equation}\label{eqIVandV}
  \sum_{z\in A_\delta^*}
  (\IV_z+\V_z)\le 
  C\mu_u (\bigcup_{z\in A_\delta^*} (Q_z)_{2\cpropjbounds\eps})\le
  C \mu_u( (\partial\Omega)_{3\sqrt n \delta})\le C\theta,
 \end{equation}
{for $\varepsilon$ sufficiently small,} where in the last step we used \eqref{eqchoicedelta0}.
Combining the previous estimates, {\eqref{eqsaltoPhi} yields}
\begin{equation}\label{theodensityintrog3}
\strokedint_{B_\eps}\int_{{\Omega\cap}(J_u\cup \Phi^{-1}(J_{w_\zeta}))}g_0(|[u]-[w_\zeta]\circ\Phi|)\,d\calH^{n-1}d\zeta\leq C\theta\,.
\end{equation}

We claim next that for $\eps$ sufficiently small
\begin{equation}\label{e:theodensityintrog4}
\strokedint_{B_\eps}\int_{\Omega\cap(J_u\cup \Phi^{-1}(J_{w_\zeta}))}g_0(|[u]|+|[w_\zeta]\circ\Phi|)\,
\bigl|\nu_u - \nu_{w_\zeta}\circ\Phi\bigr|\,d\calH^{n-1}d\zeta\leq C\theta\,.
\end{equation}
Thanks to subadditivity and monotonicity of $g_0$,
\eqref{theodensityintrog3} implies that
it suffices to prove
\begin{equation}\label{e:theodensityintrog41}
\strokedint_{B_\eps}\int_{\Omega\cap(J_u\cap \Phi^{-1}(J_{w_\zeta}))}g_0(|[u]|)\,
\bigl|\nu_u - \nu_{w_\zeta}\circ\Phi\bigr|\,d\calH^{n-1}d\zeta\leq C\theta\,.
\end{equation}
 Similarly, by
\eqref{eqI1zIIz},
{\eqref{eqchoicedeltaprime}} and
\eqref{eqIVandV} it suffices prove
\begin{equation}\label{e:theodensityintrog42}
\sum_{z\in A_\delta}
\strokedint_{B_\eps}\int_{Q_z\cap L_z\cap \Phi^{-1}(H_z)}
g_0(|[u]|) |\nu_u-\nu_{w_\zeta}\circ\Phi| d\calH^{n-1}
\,d\zeta\leq C\theta\,.
\end{equation}
From \eqref{eqnuwzeta}
we obtain that
$|\nu_{w_\zeta}\circ\Phi-
R_ze_n|\le \theta$ almost everywhere on $Q_z\cap\Phi^{-1}(H_z)$. The claim follows then from
\eqref{e:approx scala delta} and
integrability of {$g_0(|[u]|)$}.

\itemtit{Choice of $\zeta$, conclusion of the proof}

From \eqref{eqnorm2}, \eqref{e:energia bulk w} , \eqref{theodensityintrog3} and \eqref{e:theodensityintrog4}, it is easy to check that there is a subset
$\tilde B\subset B_\eps$, with $|\tilde B|/|B_\eps|>1/2$, such that for all $\zeta\in \tilde B$ {\eqref{eqHz} holds and} we have
\[\int_\Omega| w_\zeta- u|dx\leq C\theta,\]
\[\int_\Omega|\nabla w_\zeta-\nabla u|^pdx\leq C\theta,\]
\[\int_{{\Omega\cap}(J_u\cup \Phi^{-1}(J_{w_\zeta}))}g_0(|[u]-[w_{\zeta}]\circ\Phi|)\,d\calH^{n-1}\leq C\theta,\]
\[\int_{{\Omega\cap}(J_u\cup \Phi^{-1}(J_{w_\zeta}))}g_0(|[u]|+|[w_{\zeta}]\circ\Phi|)\,|\nu_u-\nu_{w_\zeta}\circ\Phi|\,d\calH^{n-1}\leq C\theta.\]
{If $\calH^{n-1}(J_u)<\infty$ then, using \eqref{eqjumpnotH}, we can choose $\tilde B$  so that additionally
\begin{equation*}
  \sum_{z\in A_\delta}
 \calH^{n-1}(\overline Q_z\cap
 J_{w_\zeta} \setminus H_z
 ) \le C\theta,
\end{equation*}
which by the Coarea formula as usual implies
\begin{equation}\label{eqHn1noH}
  \sum_{z\in A_\delta}
 \calH^{n-1}(\overline Q_z\cap
\Phi^{-1}( J_{w_\zeta} \setminus H_z)
 ) \le C\theta.
\end{equation}}

Properties
\ref{theodensityintroL1}, \ref{theodensityintroLp} and \ref{theodensityintrog0} follow;
{\ref{theodensityintrosimpl} and \ref{theodensityintrobilip} had already been proven.}
Property \ref{theodensityintrosbv0} is immediate.

It remains to prove \ref{theodensityintroHn1}. We assume that $\calH^{n-1}(J_{u})<\infty$
and start from a bound
on $\Phi^{-1}(J_{w_\zeta})\setminus J_u$.
We split the jump set of $w_\zeta$ into the contribution inside each cube $Q_z$, for $z\in A_\delta^*\cup A_\delta$, and then
for each $z\in A_\delta$,
we split the jump set $J_{w_\zeta}$ into the part in $H_z$ and the rest. We obtain
\begin{equation}\begin{split}
\label{Jw-Ju1}&
\calH^{n-1}(\Omega\cap\Phi^{-1}(J_{w_\zeta})\setminus J_u)
\le \sum_{z\in A^*_\delta}\calH^{n-1}( \overline Q_z\cap \Phi^{-1}(J_{w_\zeta}))
\\
	&\hskip2mm+\sum_{z\in A_\delta}\Big(\calH^{n-1}(
\overline Q_z\cap
\Phi^{-1}(H_z)\setminus J_u)+\calH^{n-1}
(\overline Q_z\cap
\Phi^{-1}(J_{w_\zeta}\setminus H_z ))
\Big).
\end{split}\end{equation}In the first term in the second line,
we
use \eqref{eqmujbdryv2} to drop the part on $\partial {Q_z}$ and then separate the contributions inside and outside $L_z$.
Equation \eqref{eqHQPhiL0} ensures that $Q_z'\cap \Phi^{-1}(H_z)\setminus L_z=\emptyset$.
We obtain
\begin{equation}\label{Jw-Ju2}\begin{split}
& \sum_{z\in A_\delta}\calH^{n-1}(
\overline Q_z\cap
\Phi^{-1}(H_z)\setminus J_u)
\\
&\le \sum_{z\in A_\delta}\Big(\calH^{n-1}(Q_z\cap L_z\setminus J_u)+\calH^{n-1}(Q_z \cap \Phi^{-1}(H_z)\setminus Q_z')\Big)\le C\theta,
\end{split}
\end{equation}
where the last inequality follows from \eqref{e:approx scala delta02}, \eqref{eqchoicedeltaprime} and the Coarea formula.
The
other two terms in \eqref{Jw-Ju1}
can be bounded by \eqref{eqHn1noH} and  \eqref{eqchoicedelta02}, and we conclude
\begin{equation}\label{Jw-Ju3}
\calH^{n-1}(\Omega\cap\Phi^{-1}(J_{w_\zeta})\setminus J_u)
\le  C\theta.
\end{equation}

The converse inequality is proven by a different argument based on lower semicontinuity.
As discussed in the first lines of the proof, taking a sequence $\theta_j\to0$ we obtain a sequence $w_j$ which has all stated properties, except that \ref{theodensityintroHn1}
is replaced by the weaker assertion
\begin{equation}\label{eqviweak}
\limsup_j
 \calH^{n-1}(\Omega\cap \Phi_j^{-1}(J_{w_j})\setminus J_u)=0.
\end{equation}
In particular $w_j$ converges to $u$ in $L^1(\Omega;\R^m)$,
and since $\nabla w_j$ converges to $\nabla u$ strongly in $L^p(\Omega;\R^{m\times n})$, {with $p\in[1,\infty)$ given}, there is a function $f:\R^{m\times n}\to[0,\infty)$ with superlinear growth at infinity such that
\begin{equation}
 \limsup_j
 \int_\Omega f(\nabla w_j) dx
  + \calH^{n-1}(\Omega\cap J_{w_j})<\infty
\end{equation}
(if $p>1$, then $f(\xi):=|\xi|^p$ itself will do; {if $p=1$, de la Vallée-Poussin Theorem gives the conclusion}).
By the $SBV$ closure and lower semicontinuity theorem \cite[Theorem 4.7]{AFP}, we deduce
\begin{equation}\label{eqlsch}
 \calH^{n-1}(\Omega\cap J_u)\le\liminf_j
 \calH^{n-1}(\Omega\cap J_{w_j})=\liminf_j
 \calH^{n-1}(\Omega\cap \Phi_j^{-1}(J_{w_j})),
\end{equation}
where the last equality can be obtained from the area formula and \ref{theodensityintrobilip}.
By additivity of $\calH^{n-1}$,
\begin{equation}\begin{split}
 \calH^{n-1}(\Omega\cap J_u\setminus \Phi_j^{-1}(J_{w_j}))
=& \calH^{n-1}(\Omega \cap J_u )
- \calH^{n-1}(\Omega\cap \Phi_j^{-1}(J_{w_j}))\\
&+ \calH^{n-1}(\Omega\cap  \Phi_j^{-1}(J_{w_j})\setminus J_u).
\end{split}\end{equation}
Using first  \eqref{eqlsch}
and then \eqref{eqviweak}, 
\begin{equation}
 \limsup_j
 \calH^{n-1}(\Omega\cap J_u\setminus \Phi_j^{-1}(J_{w_j}))
 \le
\limsup_j
 \calH^{n-1}(\Omega\cap \Phi_j^{-1}(J_{w_j})\setminus J_u)=0,
\end{equation}
which concludes the proof.
\end{schrittlist}
\end{proof}

\section*{Acknowledgements}

SC gratefully
thanks the University of Florence for the warm hospitality of the DiMaI ``Ulisse
Dini'', where part of this work was carried out.
MF gratefully acknowledges the warm hospitality of the Institute of Applied Mathematics of the University of Bonn, where part of this work was carried out.

\addcontentsline{toc}{section}{References}
\bibliographystyle{alpha-noname}
\bibliography{cfi}

\end{document}